\newcommand{\C}{\mathbb{C}}
\newcommand{\cF}{\mathcal{F}}
\newcommand{\G}{\mathbb{G}}
\newcommand{\bGamma}{\mathbb{\Gamma}}
\newcommand{\HH}{\mathsf{H}}
\newcommand{\K}{\mathsf{K}}
\newcommand{\op}[1]{\operatorname{#1}}
\newtheorem{thm}{Theorem}[section]
\newtheorem{lem}[thm]{Lemma}
\newtheorem{prop}[thm]{Proposition}
\theoremstyle{definition}
\newtheorem{defn}[thm]{Definition}
\newtheorem{rem}[thm]{Remark}
\newtheorem{ex}[thm]{Example}
\newtheorem{introtheorem}{Theorem}
\numberwithin{equation}{section}
\begin{document}
\title{On quantum Cayley graphs}
\author{Mateusz Wasilewski}
\address{Institute of Mathematics of the Polish Academy of Sciences, ul. \'{S}niadeckich 8, 00–656
Warszawa, Poland}
\email{mwasilewski@impan.pl}
\begin{abstract}
We clarify the correspondence between the two approaches to quantum graphs: via quantum adjacency matrices and via quantum relations. We show how the choice of a (possibly non-tracial) weight manifests itself on the quantum relation side and suggest an extension of the theory of quantum graphs to the infinite dimensional case. Then we use this framework to introduce quantum graphs associated to discrete quantum groups, leading to a new definition of a quantum Cayley graph.
\end{abstract}
\maketitle

\section{Introduction}
Inspired by the Lov\'{a}sz bound for the zero-error capacity of a channel, the authors of \cite{MR3015725} introduced the notion of a quantum graph on a matrix algebra $M_n$, as an object associated to a quantum channel $\Phi: M_n \to M_n$ -- its quantum confusability graph. Independently, the author of \cite{MR2908249} developed a theory of quantum relations, inspired by his previous joint work with Kuperberg \cite{MR2908248} on quantum metrics. Specialised to symmetric, reflexive relations on $M_n$, both approaches give the same answer -- operator subsystems $V \subset M_n$.

Another approach has been suggested in \cite{MR3849575}, where the authors introduced the notion of a quantum adjacency matrix. They were inspired by \cite{MR3471851}, where quantum graph homomorphisms have been introduced, viewed as perfect quantum strategies of winning the graph homomorphism game. The authors of \cite{MR3849575} developed a categorical framework in which one can talk about general (finite) quantum sets and quantum functions. A quantum graph is then a quantum set with the extra structure of a quantum adjacency matrix, and a quantum graph homomorphism is a quantum function that preserves this extra structure. This approach to quantum graphs is equivalent to the one using operator systems (see \cite{MR4402656} for an explicit correspondence). This definition has been extended in \cite{MR4091496} to include quantum graphs equipped with non-tracial states and new links have been found between quantum isomorphisms between quantum graphs and monoidal equivalence of the respective quantum automorphism groups. In \cite{2203.08716} a correspondence between the operator systems and quantum adjacency matrices has been established in the non-tracial case. In \cite{Gro22} and \cite{MR4481115} small examples, namely quantum graphs on $M_2$, have been classified.

In this work we clarify the relation between various definitions in the non-tracial case for general quantum graphs, without any symmetry assumptions. The main point here is a consistent use of the KMS inner product, as opposed to the more common GNS inner product. Using the Hilbert space techniques from \cite{2203.08716}, but with the KMS inner product replacing the GNS one, we re-establish the one-to-one correspondence between quantum adjacency matrices on a finite dimensional $C^{\ast}$-algebra $\op{B}$ and projections in $\op{B}\otimes \op{B}^{\op{op}}$. Because we do not assume GNS symmetry, no invariance under the modular group is required.

\begin{introtheorem}
There is a one-to-one correspondence between quantum adjacency matrices $A:\op{B}\to \op{B}$ and projections in $\op{B}\otimes \op{B}^{\op{op}}$. Moreover, the tensor flip of the projection corresponds to taking the KMS adjoint of $A$.
\end{introtheorem}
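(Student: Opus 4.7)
The plan is to adapt the Hilbert-space argument of \cite{2203.08716} by replacing the GNS inner product with the KMS inner product throughout; the extra symmetry of the KMS inner product is exactly what allows the modular-invariance hypothesis to be dropped. I would begin by letting $H$ denote $\op{B}$ equipped with the KMS inner product coming from the fixed state. With this choice both the left regular representation $L:\op{B}\to B(H)$ and a suitable right representation $R:\op{B}^{\op{op}}\to B(H)$, twisted by the modular operator so that $R$ remains a $\ast$-homomorphism, have mutually commuting images that jointly generate $B(H)$. In finite dimensions this produces a $\ast$-isomorphism $\Phi:\op{B}\otimes\op{B}^{\op{op}}\xrightarrow{\sim}B(H)$, and any linear map $A:\op{B}\to\op{B}$ is then associated with an element $\tilde{A}\in\op{B}\otimes\op{B}^{\op{op}}$ via a Choi-type formula built from the multiplication $m$ and its KMS adjoint $m^{\dagger}$.

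Next I would translate the Schur-idempotency condition $m(A\otimes A)m^{\dagger}=A$ into an algebraic condition on $\tilde{A}$. The key input is the Frobenius identity satisfied by $m$ and $m^{\dagger}$ in the KMS framework, which holds without any modular hypothesis on the state; combined with the Hermitian-preservation property of $A$, this transforms the QAM axioms into the pair of equations $\tilde{A}^{2}=\tilde{A}$ and $\tilde{A}^{\ast}=\tilde{A}$, so $\tilde{A}$ is a projection. The reverse direction is obtained by reading the argument backwards, defining a quantum adjacency matrix from a given projection through $\Phi$.

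For the flip statement I would verify directly that if $A$ corresponds to $p$ then the KMS adjoint $A^{\dagger}$ corresponds to the tensor flip $\tau(p)$: expressing $\langle Ax,y\rangle_{\op{KMS}}$ as a bilinear pairing mediated by $p$ and swapping the arguments $x,y$, which by definition turns $A$ into $A^{\dagger}$, transposes the two tensor factors of $\op{B}\otimes\op{B}^{\op{op}}$, which is precisely $\tau$. The main technical obstacle is pinning down the exact form of the twisted right representation and the Choi-type formula so that $\Phi$ is a $\ast$-isomorphism compatible with all the relevant products; once those identifications are correctly set up, the remaining verifications are algebraic and parallel the GNS proof of \cite{2203.08716}, but with no need to impose modular invariance.
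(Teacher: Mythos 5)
Your proposal follows essentially the same route as the paper: a generalized Choi-type map $\op{End}(\op{B},\langle\cdot,\cdot\rangle_{\op{KMS}})\to\op{B}\otimes\op{B}^{\op{op}}$ under which the Schur product $m(\cdot\otimes\cdot)m^{\ast}$ becomes the ordinary product, complete positivity becomes positivity of the image, and the tensor flip becomes the KMS adjoint (this is Lemma \ref{Lem:choirep} and Proposition \ref{Prop:Generalizedchoi}, with the idempotency step delegated to \cite[Proposition 2.3]{MR4555986}). One correction to your setup: when $\op{B}$ is a direct sum of several matrix blocks, the left and twisted right regular representations do \emph{not} jointly generate $B(H)$ (a central projection acts identically from both sides), so the map $\Phi:\op{B}\otimes\op{B}^{\op{op}}\to B(H)$ you describe is neither injective nor surjective; the correspondence has to be carried by the Choi-type linear bijection itself, or by the isomorphism of $\op{B}\otimes\op{B}^{\op{op}}$ with the algebra of $\op{B}'$-bimodular maps as in Proposition \ref{Prop:Bimodproj}, and the inverse direction should be written as a slice-map formula rather than as $\Phi^{-1}$. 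Finally, the flip literally corresponds to the transpose $\overline{A}^{\ast}_{\op{KMS}}$, where $\overline{A}(x)=(A(x^{\ast}))^{\ast}$; this coincides with the KMS adjoint precisely because quantum adjacency matrices are $\ast$-preserving, a point your ``swap the arguments'' heuristic glosses over but which does not affect the stated conclusion.
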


Another thing that has been missing is that even to define the notion of a quantum adjacency matrix we need to have a state (or a more general positive functional, depending on the normalisation convention) on our algebra, but Weaver's theory of quantum relations works for arbitrary von Neumann algebras without any reference state: a quantum relation on $\mathsf{M}\subset \op{B}(\HH)$ is a weak$^{\ast}$ closed $\mathsf{M}'$-bimodule and there is a way of making this notion representation independent. It turns out that, using an old result of Haagerup from \cite{MR549119}, a weight $\psi$ on $\mathsf{M}$ gives an operator valued weight $\psi^{-1}$ from $\op{B}(\HH)$ to $\mathsf{M}'$. In our case, i.e. if $\mathsf{M}$ is a finite dimensional $C^{\ast}$-algebra with a positive functional $\psi$ such that $mm^{\ast} = \op{Id}$ then $\psi^{-1}$ is a faithful normal conditional expectation, which endows $\op{B}(\HH)$ with a Hilbert $\mathsf{M}'$-module structure. Working with an analogue of the KMS inner product in this case we obtain a correspondence between projections in $\op{B}\otimes \op{B}^{\op{op}}$ and weak$^{\ast}$ closed $\op{B}'$-bimodules that interacts nicely with symmetry conditions.

\begin{introtheorem}
For a finite dimensional $C^{\ast}$-algebra $\op{B} \subset \op{B}(\HH)$ equipped with a positive functional $\psi$ such that $mm^{\ast}=\op{Id}$ there is a one-to-one correspondence between projections $P\in\op{B}\otimes \op{B}^{\op{op}}$ and weak$^{\ast}$ closed $\op{B}'$-bimodules $V\subset \op{B}(\HH)$. Moreover, the bimodule $V^{\ast}$ corresponds to the tensor flip of $P$.
\end{introtheorem}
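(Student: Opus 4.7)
The plan is to chain Theorem B with a transport step that converts a quantum adjacency matrix on $\op{B}$ into a weak$^{\ast}$-closed $\op{B}'$-bimodule inside $\op{B}(\HH)$, using the operator-valued weight $\psi^{-1}:\op{B}(\HH)\to\op{B}'$ coming from Haagerup's construction. The key structural observation is that $\op{B}\otimes\op{B}^{\op{op}}$ acts on $\op{B}(\HH)$ by $(a\otimes b^{\op{op}})\cdot x = axb$, and this action is automatically $\op{B}'$-bilinear because $\op{B}'$ commutes with $\op{B}$ from both sides.

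First I would set up the Hilbert $\op{B}'$-module framework. The hypothesis $mm^{\ast}=\op{Id}$ together with finite-dimensionality guarantees, by Haagerup's result, that $\psi^{-1}$ is a faithful normal conditional expectation. Equipping $\op{B}(\HH)$ with a $\op{B}'$-valued inner product built from $\psi^{-1}$, in its KMS variant (so as to absorb the modular automorphism and render right multiplication by $\op{B}$ adjointable in a symmetric way), turns it into a self-dual Hilbert $\op{B}'$-module. The crucial intermediate step is then to identify the algebra of $\op{B}'$-bilinear adjointable operators on this module with precisely $\op{B}\otimes\op{B}^{\op{op}}$; one inclusion is the observation above, while the reverse follows from the commutation theorem together with finite-dimensionality and faithfulness of $\psi^{-1}$.

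With the identification in hand, the bijection is immediate. Given a projection $P\in\op{B}\otimes\op{B}^{\op{op}}$, the space $V_P:=P(\op{B}(\HH))$ is weak$^{\ast}$-closed (because $P$ is a self-adjoint projection on a self-dual Hilbert module) and a $\op{B}'$-bimodule (because $P$ commutes with the two-sided $\op{B}'$-action). Conversely, the orthogonal projection in the Hilbert-module sense onto a weak$^{\ast}$-closed $\op{B}'$-bimodule $V$ is $\op{B}'$-bilinear and hence lies in $\op{B}\otimes\op{B}^{\op{op}}$; these two constructions are mutually inverse. For the statement about $V^{\ast}$, the involution $x\mapsto x^{\ast}$ on $\op{B}(\HH)$ intertwines left and right via $(axb)^{\ast} = b^{\ast}x^{\ast}a^{\ast}$, which on the projection side amounts to the tensor flip composed with the $\ast$-operation on each factor; since $P$ is self-adjoint, the flip alone implements passage to $V^{\ast}$, consistent with Theorem B relating the flip to the KMS adjoint on the adjacency-matrix side.

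The main obstacle will be the identification of $\op{B}'$-bilinear adjointable operators on $\op{B}(\HH)$ with $\op{B}\otimes\op{B}^{\op{op}}$: obtaining exactly this algebra under the KMS rather than GNS inner product, with no extraneous modular corrections, is precisely what forces the KMS choice and constitutes the technical heart of the theorem.
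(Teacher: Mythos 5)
Your architecture is the same as the paper's: pass through Haagerup's operator-valued weight to get the faithful normal conditional expectation $\psi^{-1}:\op{B}(\HH)\to\op{B}'$, make $\op{B}(\HH)$ a self-dual Hilbert $\op{B}'$-module, identify the $\op{B}'$-bimodular maps with $\op{B}\otimes\op{B}^{\op{op}}$, match projections with their ranges, and derive the flip statement from $\langle x,y\rangle_{\op{KMS}}=\langle y^{\ast},x^{\ast}\rangle_{\op{KMS}}$. The paper does exactly this (Propositions \ref{Prop:Inversecondexp}, \ref{Prop:selfdual}, \ref{Prop:Bimodproj} and Theorem \ref{Thm:projbimod}).

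There is, however, a genuine gap at your ``key structural observation''. The assignment $b\otimes c^{\op{op}}\mapsto(x\mapsto bxc)$ is an algebra homomorphism onto the bimodular maps, but it is \emph{not} a $\ast$-homomorphism for the inner product induced by $\psi^{-1}$: the adjoint of right multiplication by $c$ is right multiplication by $\sigma_{-i}(c^{\ast})$, not by $c^{\ast}$. Since the one-to-one correspondence requires matching self-adjoint idempotents of $\op{B}\otimes\op{B}^{\op{op}}$ with \emph{orthogonal} (not merely idempotent) bimodular projections, the identification must be a $\ast$-isomorphism, and this forces a modular twist: $\Phi(b\otimes c^{\op{op}})(x)=bx\sigma_{-\frac{i}{2}}(c)$ for the GNS-type inner product, or $x\mapsto\sigma_{\frac{i}{4}}(b)x\sigma_{-\frac{i}{4}}(c)$ for the KMS one. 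Contrary to your closing remark, the KMS choice does not eliminate the modular corrections; it only distributes them symmetrically, and the resulting correspondence reads $V\leftrightarrow P_{\sigma_{-i/4}^{\psi^{-1}}(V)}$ rather than the naive $V=P\#\op{B}(\HH)$. This in turn requires showing that $\sigma_{z}^{\psi^{-1}}(V)$ makes sense for complex $z$ on a weak$^{\ast}$ closed bimodule inside the possibly infinite-dimensional $\op{B}(\HH)$ (Lemma \ref{Lem:Analyticbimod}), a step absent from your outline. Two smaller points: surjectivity of $\Phi$ onto all bounded bimodular maps is not just ``the commutation theorem'' but the representation theorem for normal bimodular maps ($x\mapsto\sum_k b_kxc_k$), and self-duality of a weak$^{\ast}$ closed submodule needs the finite-index estimate $x\leqslant K\psi^{-1}(x)$ together with the separate weak$^{\ast}$ continuity of the inner product, not finite-dimensionality of $\HH$ (which is not assumed).
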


We also extended the notion of a quantum graph to an infinite direct sum of matrix algebras. In this case the $\op{B}'$-valued inner product on a weak$^{\ast}$ closed bimodule is not necessarily complete, so in general we have to work with pre-Hilbert modules, but the genuine Hilbert modules form a natural class from the standpoint of quantum graph theory.
\begin{introtheorem}
Let $\op{B}:= \bigoplus_{\alpha=1}^{\infty} M_{n_{\alpha}}$ and let $A: c_{00}-\bigoplus_{\alpha=1}^{\infty} M_{n_{\alpha}} \to \op{B}$ be a quantum adjacency matrix. Then $A$ is of bounded degree, i.e. extends to a normal completely positive map $\widetilde{A}:\op{B} \to \op{B}$ if and only if for any faithful representation $\op{B} \subset \op{B}(\HH)$ the corresponding $\op{B}'$-bimodule $V$ is a (complete, self-dual) Hilbert $\op{B}'$-module.
\end{introtheorem}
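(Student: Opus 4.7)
The plan is to exploit the natural filtration of $\op{B}$ by its finite-dimensional subalgebras $\op{B}_F := \bigoplus_{\alpha\in F} M_{n_\alpha}$ (indexed by finite $F \subset \N$, with central support $p_F\in \op{B}$) and reduce the statement to Theorem B applied to each $\op{B}_F$.

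First I would establish compatibility of finite truncations. The compressions $A_F(x) := p_F A(p_F x p_F)$ are quantum adjacency matrices on the finite-dimensional $\op{B}_F$, so by Theorem A they correspond to projections $P_F\in \op{B}_F\otimes \op{B}_F^{\op{op}}$, and by Theorem B to finite-dimensional (hence automatically complete and self-dual) Hilbert $\op{B}_F'$-bimodules $V_F\subset \op{B}(\HH)$. As $F$ grows, the $P_F$ form an increasing net whose strong-operator limit is the projection $P\in \op{B}\otimes \op{B}^{\op{op}}$ that corresponds to $A$ on the algebraic direct sum, and the $V_F$ assemble into a pre-Hilbert $\op{B}'$-module $V^{(0)}:=\bigcup_F V_F$ equipped with the $\op{B}'$-valued inner product inherited from $\psi^{-1}$. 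The bimodule $V$ of the statement is then the weak-$*$-closure of $V^{(0)}$ in $\op{B}(\HH)$.

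For the direction $A$ bounded $\Rightarrow V$ complete, a normal completely positive extension $\widetilde{A}:\op{B}\to\op{B}$ corresponds (by the normal extension of Theorem A) to a genuine projection in the von Neumann algebraic tensor product $\op{B}\otimes \op{B}^{\op{op}}$; pulling this through the Theorem B correspondence yields a bounded orthogonal projection from $\op{B}(\HH)$, viewed as a self-dual Hilbert $\op{B}'$-module in its standard form, onto $V$, so $V$ is automatically complete and self-dual as the image of a bounded projection. Conversely, if $V$ is a complete self-dual Hilbert $\op{B}'$-module, then the inclusion $V\hookrightarrow \op{B}(\HH)$ is adjointable and the orthogonal projection $\pi_V$ is a bounded $\op{B}'$-bimodule map; running the Theorem A/B correspondence backwards produces a normal CP map $\widetilde{A}:\op{B}\to\op{B}$ which, by the coherence of Step 1, restricts to $A$ on each $\op{B}_F$ and hence on the dense subalgebra $c_{00}-\bigoplus M_{n_\alpha}$.

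The main technical obstacle is controlling the operator-valued weight $\psi^{-1}:\op{B}(\HH)\to \op{B}'$ in the infinite-dimensional setting: on each finite block $p_F \op{B}(\HH) p_F$ it is a genuine conditional expectation with finite values, but globally it is only semifinite, and its finiteness on a given element is precisely what allows that element to enter the $\op{B}'$-valued inner product. The crux is to translate this semifiniteness obstruction into the degree bound on $A$, and to check that the pre-Hilbert module $V^{(0)}$ completes (in its $\op{B}'$-valued norm) to exactly the weak-$*$-closure $V\subset \op{B}(\HH)$, rather than to something strictly larger, so that self-duality can be identified with weak-$*$-closure. A secondary check is that the dichotomy between complete and incomplete $V$ is independent of the chosen faithful representation, which follows from the intrinsic characterisation of the correspondence via $P\in\op{B}\otimes\op{B}^{\op{op}}$.
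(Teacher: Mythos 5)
There is a genuine gap, and it sits at the heart of both directions. Your forward argument says that a normal CP extension $\widetilde{A}$ yields a projection in $\op{B}\overline{\otimes}\op{B}^{\op{op}}$, hence a bounded orthogonal projection onto $V$, ``so $V$ is automatically complete and self-dual as the image of a bounded projection.'' But the representing projection $P\in\op{B}\overline{\otimes}\op{B}^{\op{op}}$ exists for \emph{every} quantum adjacency matrix $A\colon c_{00}(\op{B})\to\op{B}$, bounded degree or not (the blockwise sum of projections in disjoint corners always converges strongly), and it always induces a bounded $\op{B}'$-bimodular idempotent on $\op{B}(\HH)$ with image $V$. If your argument were valid it would prove that every weak$^{\ast}$ closed bimodule is a self-dual Hilbert module and the theorem would be vacuous. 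The error is a conflation of two norms: the image of an operator-norm-bounded projection is operator-norm closed, but completeness of $V$ in the statement refers to the norm $\|\psi^{-1}(v^{\ast}v)\|^{1/2}$, and in the infinite case these norms are inequivalent on $\op{B}(\HH)$ (the blockwise constants blow up), so $\op{B}(\HH)$ itself is not a Hilbert $\op{B}'$-module and ``image of a bounded projection'' gives nothing. The actual mechanism, which your proposal never touches, is the convergence of the degree operator $D=\sum_j V_jV_j^{\ast}$ for an orthonormal basis $(V_j)$ of the bimodule: following Baillet--Denizeau--Havet one builds $\eta=\sum_j V_j\otimes V_j^{\ast}$ in the relative tensor product and the CP map $F(T)=\langle\eta,T\eta\rangle$ with $F(|v\rangle\langle v|)=vv^{\ast}$, yielding $\|v\|^2\leqslant\|\eta\|^2\|\psi^{-1}(v^{\ast}v)\|$, which is exactly the estimate that makes the pre-Hilbert norm complete. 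Your ``main technical obstacle'' paragraph is also aimed at the wrong target: $\psi^{-1}$ is a genuine normal conditional expectation here (finite values everywhere), not merely semifinite; the obstruction is the norm inequivalence just described, equivalently the integrability condition $(\op{Id}\otimes\psi^{\op{op}})P\in\op{B}$.

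The converse has the mirror-image gap, plus a second one. Again, the bounded bimodular projection $\pi_V$ and the normal map it represents exist unconditionally; what must be shown is that the associated degree operator is bounded, i.e.\ that $\sum_k X_kX_k^{\ast}$ converges for an orthonormal basis $(X_k)$. The paper extracts this from self-duality by first upgrading the Banach-space equivalence of the operator norm and the module norm on $V$ to a \emph{completely bounded} equivalence, and for that it crucially uses the hypothesis for the inflated representation $\op{B}\subset\op{B}(\HH\otimes\ell^2)$, i.e.\ self-duality of $V\overline{\otimes}\op{B}(\ell^2)$. You relegate the quantification over all faithful representations to a ``secondary check'' that supposedly follows from representation-independence; in fact it is an essential hypothesis consumed by the proof, and whether a single representation suffices is exactly the open point the paper flags (the gap between Proposition 3.3 and Th\'eor\`eme 3.5 of Baillet--Denizeau--Havet, resolved for conditional expectations of finite index only by Frank--Kirchberg's $C^{\ast}$-algebraic argument). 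Your finite-truncation scaffolding in Step 1 is essentially harmless but does no work toward either of these points, since the dichotomy the theorem is about is invisible at the level of finite blocks.
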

Next we investigate the quantum adjacency matrices on the algebras associated to discrete quantum groups, as they are infinite direct sums of matrix algebras possessing a natural weight, namely either of the Haar weights, and they happen to satisfy the condition $mm^{\ast}=\op{Id}$, hence fit perfectly into the theory. To respect the group structure, we study quantum adjacency matrices that are covariant with respect to the right action, i.e. left convolutions.
\begin{introtheorem}
Let $\bGamma$ be a discrete quantum group and let $A:\ell^{\infty}(\bGamma)\to \ell^{\infty}(\bGamma)$ be given by $Ax:= P\ast x$ for some $P\in c_{00}(\bGamma)$, where $\ell^{\infty}(\bGamma)\simeq \ell^{\infty}-\bigoplus_{\alpha \in \op{Irr}(\mathbb{G})} M_{n_{\alpha}}$ is the algebra of bounded functions on $\bGamma$ and $c_{00}(\bGamma):= c_{00}-\bigoplus_{\alpha \in \op{Irr}(\mathbb{G})} M_{n_{\alpha}}$ is the algebra of finitely supported functions on $\bGamma$. Then $A$ is a quantum adjacency matrix if and only if $P$ is a projection. Moreover, $A$ is GNS symmetric if $P$ is invariant under the antipode of $\bGamma$ and it is KMS symmetric if $P$ is invariant under the \emph{unitary} antipode.
\end{introtheorem}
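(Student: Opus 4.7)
The plan is to apply Theorem C directly. The algebra $\ell^{\infty}(\bGamma)\simeq\ell^{\infty}\text{-}\bigoplus_{\alpha}M_{n_{\alpha}}$ equipped with (either) Haar weight satisfies $mm^{\ast}=\op{Id}$ by a standard calculation using the orthogonality relations of the irreducible corepresentations, so the correspondence between quantum adjacency matrices and projections in the appropriate completion of $c_{00}(\bGamma)\otimes c_{00}(\bGamma)^{\op{op}}$ is available. Because $P\in c_{00}(\bGamma)$ is finitely supported, $L_P:= P\ast \cdot$ is automatically a normal, completely bounded map on $\ell^{\infty}(\bGamma)$, so the bimodule it produces is a genuine Hilbert $\op{B}'$-module by Theorem C, and the question reduces to (i) identifying the element $P_{L_P}$ of the tensor product corresponding to $L_P$, and (ii) checking when $P_{L_P}$ is a projection and when it is fixed by the tensor flip.

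\textbf{Step 1: Explicit form of $P_{L_P}$.} I would first establish a clean formula of the shape
\begin{equation*}
P_{L_P} = (\id\otimes j)\Delta(P),
\end{equation*}
where $j:\ell^{\infty}(\bGamma)\to\ell^{\infty}(\bGamma)^{\op{op}}$ is the canonical $\ast$-antihomomorphism (built from the unitary antipode together with the modular correction that arises because the KMS inner product absorbs $\tau_{-i/2}$). The derivation uses that convolution operators are precisely the maps covariant with respect to the right regular coaction, which forces $P_{L_P}$ to sit in the subalgebra where $\Delta$ and the KMS pairing interact cleanly: plugging $L_P=(\varphi\otimes\id)((P\otimes 1)\Delta(\cdot))$ into the KMS pairing of Theorem B and applying the defining identity $mm^{\ast}=\op{Id}$ collapses the expression to $(\id\otimes j)\Delta(P)$. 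The key book-keeping issue here is that the map $(\id\otimes j)\Delta$ is a $\ast$-homomorphism on $c_{00}(\bGamma)$, so it sends projections to projections.

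\textbf{Step 2: Projection criterion and symmetries.} Since $(\id\otimes j)\Delta$ is a $\ast$-homomorphism and $\Delta$ is injective (it admits the counit as a left inverse on $c_{00}(\bGamma)$), $P_{L_P}$ is a projection if and only if $P$ is, which gives the first assertion. For the symmetry statements I invoke the second sentence of Theorem B: the tensor flip $\Sigma$ intertwines $P_{L_P}$ with $P_{L_P^{\ast_{\mathrm{KMS}}}}$. Combining the formula of Step~1 with the Hopf-algebraic identity $(R\otimes R)\Delta=\Sigma\circ\Delta\circ R$, the flip sends $(\id\otimes j)\Delta(P)$ to $(\id\otimes j)\Delta(R(P))$, so (by injectivity of $\Delta$) KMS symmetry of $L_P$ is equivalent to $R(P)=P$. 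The GNS case is parallel: repeating Step~1 with the GNS inner product replaces $j$ by the version without the $\tau_{-i/2}$ twist, and the corresponding Hopf identity relating flip to antipode involves $S$ rather than $R$, yielding GNS symmetry iff $S(P)=P$ (which is meaningful because $P\in c_{00}(\bGamma)$ lies in the domain of the antipode).

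\textbf{Main obstacle.} The hard part is Step~1: carefully carrying out the KMS-based correspondence from Theorem B for a convolution operator and verifying that all the modular corrections collapse into the compact formula $P_{L_P}=(\id\otimes j)\Delta(P)$, with the correct $j$. Once this is in place, the dichotomy between $S$ and $R$ in the two symmetry conditions is essentially forced by the fact that the KMS inner product absorbs exactly the scaling $\tau_{-i/2}$ relating the antipode to the unitary antipode, and the remainder of the argument is a mechanical application of standard quantum group identities together with the injectivity of $\Delta$.
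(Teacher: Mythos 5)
Your overall strategy is genuinely different from the paper's. The paper never computes the Choi element of $L_P$ in closed form; instead it Fourier-transforms convolution operators into right multiplication operators on $L^{\infty}(\mathbb{G})$ and verifies directly that $m(A_1\otimes A_2)m^{\ast}$ for $A_i=P_i\ast\cdot$ is again convolution, by $P_1P_2$ (Proposition \ref{Prop:Schurproductconvolution}), so that the Schur-idempotent condition becomes $P^2=P$; $\ast$-preservation is checked via the identity $(P\ast x)^{\ast}=P^{\ast}\ast x^{\ast}$; and the GNS/KMS symmetry is read off from the adjoint of a right multiplication operator together with $S=\tau_{-\frac{i}{2}}R$ and $S^2=\sigma_i$ (Theorem \ref{Thm:KMSsymmetry}). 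Your route --- identify $\Psi^{\op{KMS}}(L_P)$ as the image of $P$ under an injective $\ast$-homomorphism $(\op{Id}\otimes j)\Delta$ and transport all conditions through it --- would, if completed, be an attractive alternative and is consistent with the classical picture of the edge set $\{(g,h):gh^{-1}\in S\}$.

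However, as written the argument has a genuine gap precisely at its load-bearing step. The formula $P_{L_P}=(\op{Id}\otimes j)\Delta(P)$ is asserted, not derived, and the map $j$ is described only as ``the unitary antipode together with the modular correction''. Every subsequent claim depends on the exact form of $j$: whether $(\op{Id}\otimes j)\Delta$ really lands in $\op{B}\overline{\otimes}\op{B}^{\op{op}}$ as a $\ast$-homomorphism (needed for ``projections go to projections''), and, more delicately, whether the tensor flip of $(\op{Id}\otimes j)\Delta(P)$ equals $(\op{Id}\otimes j)\Delta(R(P))$ rather than $(\op{Id}\otimes j)\Delta(S(P))$ or a modular twist of either. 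The entire content of the $S$-versus-$R$ dichotomy in the statement is concentrated in that computation, and you have explicitly deferred it (``the hard part is Step 1''). Two smaller points. First, by Proposition \ref{Prop:Generalizedchoi}(iv) the flip of $\Psi^{\op{KMS}}(A)$ represents $\overline{A}^{\ast}_{\op{KMS}}$, not $A^{\ast}_{\op{KMS}}$, so flip-invariance is equivalent to KMS self-adjointness only once you know $A$ is $\ast$-preserving, i.e. $P=P^{\ast}$; this standing hypothesis (present in the paper's Theorem \ref{Thm:KMSsymmetry}) must be made explicit in your Step 2, and the same caveat applies to the GNS case, where the paper only obtains $S(P)=P^{\ast}$ before invoking $P=P^{\ast}$. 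Second, the appeal to Theorem C (bounded degree and self-duality of the associated bimodule) at the outset plays no role in proving this statement and can be dropped.
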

A projection $P\in c_{00}(\bGamma)$ can be viewed as a finite subset of $\bGamma$ and to define a quantum Cayley graph we need a symmetric generating subset. Symmetry should just be the KMS symmetry of the corresponding quantum adjacency matrix and the property of being generating is also not difficult to generalise to the non-commutative setting. Classically the Cayley graph is a geometric object used to study the group and its geometry depends very little on the particular generating set. Using the notion of a quantum metric from \cite{MR2908248} we arrive at the following result.
\begin{introtheorem}
Let $\bGamma$ be a discrete quantum group and let $P_1, P_2 \in c_{00}(\bGamma)$ be two generating projections. Then the corresponding quantum Cayley graphs are bi-Lipschitz equivalent.
\end{introtheorem}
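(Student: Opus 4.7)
The plan is to reduce the statement to the classical bi-Lipschitz equivalence of word metrics for two finite symmetric generating sets of a group, with tensor products of irreducible representations of the dual compact quantum group $\mathbb{G}$ playing the role of group multiplication. By the previous theorem, the quantum adjacency matrix $A_{P}x=P\ast x$ is determined by the projection $P$, and the quantum Cayley graph associated to $P$ carries a natural integer-valued quantum metric filtration $\{V^{(P)}_d\}_{d\ge 0}$ in the sense of \cite{MR2908248}: the bimodule $V^{(P)}_d$ is the one assigned, via the correspondence of the second theorem, to the support projection of $\sum_{j=0}^{d}P^{\ast j}$, encoding all paths of length at most $d$. Composition powers of $A_P$ correspond to convolution powers $P^{\ast k}$, whose support at the level of $\op{Irr}(\mathbb{G})$ consists of the irreducible sub-objects of $k$-fold tensor products of representations in $\op{supp}(P)$.

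With this set-up, the generating hypothesis on $P$ unpacks as the condition $\bigvee_{k\ge 0}\op{supp}(P^{\ast k})=1_{\ell^{\infty}(\bGamma)}$; that is, every $\alpha\in\op{Irr}(\mathbb{G})$ appears as a sub-object of some iterated tensor product of representations in $\op{supp}(P)$. Since $P_2$ is finitely supported, it meets only finitely many blocks $M_{n_\alpha}$, and by the generating hypothesis applied to $P_1$ each such $\alpha$ lies in $\op{supp}(P_1^{\ast k_\alpha})$ for some minimal $k_\alpha$. Setting $C_{12}:=\max_{\alpha\in\op{supp}(P_2)}k_\alpha$ produces the pointwise domination
\[
\op{supp}(P_2)\;\le\;\bigvee_{j=0}^{C_{12}}\op{supp}(P_1^{\ast j}),
\]
and iterating via associativity of convolution and the transitivity of ``being a sub-object of a tensor product of$\ldots$'' promotes it to $\op{supp}(P_2^{\ast d})\le \bigvee_{j=0}^{C_{12}d}\op{supp}(P_1^{\ast j})$ for every $d\ge 1$. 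Swapping $P_1$ and $P_2$ yields a symmetric constant $C_{21}$ with the analogous estimate.

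Translating these support containments back through the correspondence of the second theorem gives the bimodule inclusions $V^{(P_2)}_d\subset V^{(P_1)}_{C_{12}d}$ and $V^{(P_1)}_d\subset V^{(P_2)}_{C_{21}d}$, which is precisely the statement that the two quantum metric filtrations are bi-Lipschitz equivalent. The main obstacle I anticipate is the bookkeeping required to make rigorous the identification between convolution powers of projections in $c_{00}(\bGamma)$, composition powers of the adjacency matrices, and the Kuperberg-Weaver filtration on the Cayley graph, together with fixing a precise notion of bi-Lipschitz equivalence for quantum metrics compatible with the inclusions above; once all the definitions are aligned, the combinatorial core collapses to the elementary counting sketched here.
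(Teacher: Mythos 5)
Your overall skeleton --- convolution powers of the projections compute composition powers of the adjacency matrices, finite support of one generating projection tested against the generating filtration of the other yields a uniform constant, and multiplicativity of the filtration promotes the base case to all scales --- is the same as the paper's (Theorem \ref{Thm:bilipschitz}). But there is a genuine gap in how you unpack the generating hypothesis. You reduce everything to the level of $\op{Irr}(\mathbb{G})$: ``every $\alpha$ appears as a sub-object of some iterated tensor product of representations in $\op{supp}(P)$'', and you then run the counting argument on which blocks are hit. That characterization is correct only for \emph{central} generating projections (Proposition \ref{Prop:generatingrep}); for a general $P\in c_{00}(\bGamma)$ the generating condition is the operator identity $\bigvee_{n}[P^{\ast n}]=\mathds{1}$ in $\ell^{\infty}(\bGamma)$, which records the position of the ranges \emph{inside} each matrix block. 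The paper's own example shows the two notions genuinely differ: for $\bGamma=\widehat{SU(2)}$ the rank-one projection $e_{11}$ in the fundamental block has generating central cover, yet $[e_{11}^{\ast n}]$ stays rank one in every block it reaches, so $e_{11}$ is not generating. Symmetrically, a block-level containment $\op{supp}(P_2)\leqslant\bigvee_{j}\op{supp}(P_1^{\ast j})$ does not imply the bimodule inclusion $V^{(P_2)}_{1}\subset V^{(P_1)}_{C_{12}}$, because the bimodules $V_{\alpha\beta}$ are honest subspaces of $\op{B}(\C^{n_{\alpha}},\C^{n_{\beta}})$, and two quantum relations can meet exactly the same pairs of blocks while being incomparable as subspaces. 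So both directions of your reduction (generating $\Rightarrow$ block statement, and block statement $\Rightarrow$ bimodule inclusion) fail for non-central $P_i$.

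The repair is to run your argument with the actual range projections, which is what the paper does. Since $P_2\in c_{00}(\bGamma)$ lives in finitely many finite-dimensional blocks and $Q_M:=\bigvee_{i=1}^{M}[P_1^{\ast i}]$ increases to $\mathds{1}$, in each relevant block the ranks of $(Q_M)_{\alpha}$ form a bounded non-decreasing integer sequence, hence $(Q_M)_{\alpha}=\mathds{1}_{\alpha}$ for $M$ large; this gives the operator inequality $P_2\leqslant Q_M$, and it is here --- not in a count over $\op{Irr}(\mathbb{G})$ --- that finite support is used. The iteration step likewise cannot be done by ``transitivity of sub-objects'': it requires identifying the range projection of the Choi matrix of $A_i^{k}$ with the Choi matrix of convolution by $[P_i^{\ast k}]$ (Proposition \ref{Prop:Schurproductconvolution} together with Lemma \ref{lem:poweradj}) and then invoking $V_sV_t\subset V_{s+t}$ for the Kuperberg--Weaver filtration. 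This is exactly the ``bookkeeping'' you deferred to the end, but it is the substance of the proof rather than an afterthought, since it is what legitimately replaces the block-level combinatorics.
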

We finish the introduction by quickly describing the contents of the paper. In section \ref{Sec:Prelim} we gather some information about the KMS inner product, Hilbert modules and quantum groups. In section \ref{Sec:Qgraphs} we discuss the correspondence between the three approaches to quantum graphs, and also extend it to the infinite dimensional case. Next, in section \ref{Sec:covqadj}, we finally turn to quantum groups. We describe the covariant quantum adjacency matrices and characterise the GNS/KMS symmetric ones. In the last section \ref{Sec:qcayley} the quantum Cayley graphs are introduced. We prove that changing the generating projection results in a bi-Lipschitz equivalent quantum graph. We also give an easy application: if the balls in the Cayley graph grow subexponentially then the discrete quantum group is amenable. We finish this section with some examples.
\section{Preliminaries}\label{Sec:Prelim}
We will work with pairs $(\op{B}, \psi)$, where $\op{B}$ is a finite dimensional $C^{\ast}$-algebra equipped with non-tracial positive functional $\psi$. If $\op{B}$ is just a matrix algebra $M_n$ then it means that $\psi$ is of the form $\psi(x) := \op{Tr}(\rho x)$, where $\rho$ is a positive semidefinite matrix. In this case you can define an inner product on $M_n$ by $\langle x, y \rangle := \psi(x^{\ast} y)$, which we will refer to as the GNS inner product, and if $\rho$ is invertible, then it provides a Hilbert space structure on $\op{B}$, which we will assume from now on. There is also another way to define an inner product, for which we need the notion of the modular group of a functional. For each $t\in \mathbb{R}$ we define $\sigma_{t}(x):= \rho^{i t} x \rho^{-i t}$, i.e. a conjugation by the unitary matrix $\rho^{it}$. We have $\sigma_{t} \circ \sigma_{s} = \sigma_{t+s}$ and the whole collection $(\sigma_{t})_{t\in \mathbb{R}}$ is called the modular group of $\psi$. In the finite dimensional setting we can also define the modular group at complex argument, namely for any $z\in \mathbb{C}$ we put $\sigma_{z}(x):= \rho^{iz} x \rho^{-iz}$, in particular $\sigma_{-i}(x) = \rho x \rho^{-1}$; an important and easy to check property of the modular group is that $\psi \circ \sigma_{z} = \psi$. Even though the functional $\psi$ is not tracial in general, i.e. $\psi(xy)\neq \psi(yx)$ for some $x,y \in \op{B}$, the modular group allows us to recover this property to an extent. To be more precise, it enjoys the \emph{KMS property}, i.e.
\[
\psi(x y) = \psi(y \sigma_{-i}(x)),
\]
which follows from
\[
\psi(xy) = \op{Tr}(\rho x y) = \op{Tr} (\rho x \rho^{-1} (\rho y)) = \op{Tr}(\rho y \rho x \rho^{-1}) = \op{Tr}(\rho y \sigma_{-i}(x)) = \psi (y \sigma_{-i}(x)).
\]
General finite dimensional $C^{\ast}$-algebras are direct sums of matrix algebras, so it is easy to extend all these notions.

The Hilbert space structure will be of paramount importance to us, and it is therefore crucial to specify it, especially since we will not use the typical GNS inner product induced by a positive functional. Namely, we will work with the $\emph{KMS}$ inner product, i.e. $\langle x,y \rangle_{\op{KMS}} := \psi(x^{\ast} \sigma_{-\frac{i}{2}} (y))$. One can relate it to the more common GNS inner product, yielding a more symmetric formula 
\begin{equation}\label{Eq:SymKMS}
\psi(x^{\ast} \sigma_{-\frac{i}{2}}(y)) = \psi\left((\sigma_{-\frac{i}{4}}(x))^{\ast} \sigma_{-\frac{i}{4}}(y)\right).
\end{equation}
The reason for using the KMS inner product is that it interacts nicely with positivity, namely if $\psi(x) = \op{Tr}(\rho x)$, then $\langle x,y \rangle_{\op{KMS}} = \op{Tr} ((\rho^{\frac{1}{4}} x \rho^{\frac{1}{4}})^{\ast} \rho^{\frac{1}{4}} y \rho^{\frac{1}{4}})$, i.e. we use the positivity preserving embedding $x \mapsto \rho^{\frac{1}{4}} x \rho^{\frac{1}{4}}$. The KMS inner product also interacts nicely with the adjoint, namely $\langle x, y \rangle_{\op{KMS}} = \langle y^{\ast}, x^{\ast}\rangle_{\op{KMS}}$, exactly like in the tracial case. Moreover, we will want to talk about undirected graphs, which amounts to a symmetry condition on the adjacency matrix, and GNS symmetry is too restrictive of a condition, as the following well-known statement shows.
\begin{lem}
Suppose that $A: \op{B} \to \op{B}$, where $\op{B}$ is a finite dimensional $C^{\ast}$-algebra, is GNS symmetric with respect to a faithful positive functional $\psi$, i.e. $\psi((Ax) y) = \psi(x (Ay))$. Then $A\circ \sigma_t = \sigma_t \circ A$.
\end{lem}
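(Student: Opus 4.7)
The plan is to first deduce that $A$ commutes with the modular operator $\sigma_{-i}$ from the GNS-symmetry hypothesis, and then to upgrade this to commutation with $\sigma_t$ for every real $t$ using the fact that in finite dimensions $\sigma_{-i}$ is diagonalisable and generates the whole modular group via functional calculus.

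For the first step, I apply the KMS identity $\psi(ab)=\psi(b\sigma_{-i}(a))$ to both sides of the hypothesis $\psi((Ax)y)=\psi(x(Ay))$ to obtain
\[
\psi(y\,\sigma_{-i}(Ax))\;=\;\psi((Ay)\,\sigma_{-i}(x)).
\]
Now I reapply the GNS-symmetry hypothesis to the right-hand side, with $y$ in the first slot and $\sigma_{-i}(x)$ in the second, which rewrites it as $\psi(y\,A\sigma_{-i}(x))$. Since $y\in\op{B}$ is arbitrary and $\psi$ is faithful, we may cancel $\psi(y\,\cdot\,)$ to conclude $\sigma_{-i}(Ax)=A\sigma_{-i}(x)$ for every $x$, i.e.\ $A\circ\sigma_{-i}=\sigma_{-i}\circ A$.

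For the second step, recall that on $\op{B}$ the map $\sigma_{-i}$ is simply the linear operator $\Delta\colon x\mapsto \rho x\rho^{-1}$. Writing the spectral decomposition $\rho=\sum_i\lambda_ip_i$, the subspaces $p_i\op{B}p_j$ form an eigenspace decomposition of $\Delta$ with positive eigenvalues $\lambda_i/\lambda_j$, and $\sigma_t$ acts on $p_i\op{B}p_j$ by multiplication by $(\lambda_i/\lambda_j)^{it}$. Hence $\sigma_t$ is a function of $\Delta$ (continuous, in fact polynomial on the finite spectrum of $\Delta$), and any operator commuting with $\Delta$ automatically commutes with such a function. This yields $A\circ\sigma_t=\sigma_t\circ A$ for every $t\in\mathbb{R}$.

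The real content is the small algebraic manoeuvre in the first step: using KMS to shift the hypothesis into a form where it can be applied a second time, producing commutation with $\sigma_{-i}$. The second step, which is conceptually the passage from a single Tomita-type operator to the full modular flow, is routine in finite dimensions and does not pose the main obstacle.
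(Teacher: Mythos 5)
Your proof is correct and follows essentially the same route as the paper: combining the GNS-symmetry hypothesis with the KMS identity twice to obtain commutation with $\sigma_{-i}$, then passing to all real $t$ by observing that $\sigma_t$ is a function of the diagonalisable operator $x\mapsto \rho x\rho^{-1}$. The only cosmetic difference is that the paper phrases the second step via the modular operator on the GNS Hilbert space rather than directly on $\op{B}$.
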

\begin{proof}
We have 
\begin{align*}
\psi(A(\sigma_i(x)) y) &= \psi(\sigma_{i}(x) (Ay)) = \psi((Ay) x) \\
&= \psi(y (Ax)) = \psi(\sigma_i(Ax) y),
\end{align*}
hence $A \circ \sigma_i = \sigma_i \circ A$ and this is enough to conclude that $A$ is covariant with respect to the modular group, as we will now show.

The map $A$ induces a map $\widetilde{A}(x\Omega):= A(x)\Omega$ on the level of the GNS Hilbert space $L^{2}(\op{B},\psi)$. Since by definition $\sigma_{-i}(x)\Omega = \Delta(x\Omega)$, where $\Delta: L^{2}(\op{B},\psi) \to L^{2}(\op{B},\psi)$ is the modular operator, therefore $\widetilde{A}$ commutes with $\Delta$, as $A$ commutes with $\sigma_{-i}$. It follows that $\widetilde{A} \Delta^{it} = \Delta^{it} \widetilde{A}$ for any $t\in \mathbb{R}$, hence $ A\circ \sigma_t = \sigma_t\circ A$.
\end{proof}
It is in many cases much easier to perform computations with respect to the GNS inner product, and in some instances, we can resort to that, e.g. if we want to compute the adjoint of a map that intertwines the respective modular groups.
\begin{lem}
Suppose that $A:(\op{B},\psi) \to (\op{C},\varphi)$ satisfies $A \circ \sigma_t^{\psi} = \sigma_{t}^{\varphi} \circ A$. Then the adjoint of $A$ with respect to the KMS inner products is the same as the adjoint with respect to the GNS inner products.  
\end{lem}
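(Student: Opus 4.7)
The plan is to verify that the GNS adjoint of $A$ satisfies the defining identity of the KMS adjoint, so that the two coincide by nondegeneracy of the inner product. The key preliminary step is to extend the intertwining assumption to complex modular parameter: since $\op{B}$ and $\op{C}$ are finite dimensional, both sides of $A\circ\sigma_t^{\psi} = \sigma_t^{\varphi}\circ A$ are entire functions of $t$ (the modular groups being given by conjugation with $\rho^{iz}$, which is entire in $z$), so the identity persists for every $z\in\C$, in particular at $z=-\frac{i}{2}$.

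Next I would check that the GNS adjoint $A^{\ast_{\op{GNS}}}:\op{C}\to \op{B}$ also intertwines the modular groups. This is a short computation: using $\varphi\circ\sigma_t^{\varphi}=\varphi$ and that $\sigma_t^{\varphi}$ is a $\ast$-automorphism, one has $\varphi((Ax)^{\ast}\sigma_t^{\varphi}(y)) = \varphi((A\sigma_{-t}^{\psi}(x))^{\ast} y)$ by the hypothesis, and applying the definition of $A^{\ast_{\op{GNS}}}$ to both sides yields $A^{\ast_{\op{GNS}}}\circ\sigma_t^{\varphi} = \sigma_t^{\psi}\circ A^{\ast_{\op{GNS}}}$. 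Analytic continuation again upgrades this to complex times.

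With these two ingredients, the direct computation
\[
\langle Ax, y\rangle_{\op{KMS}} = \varphi\bigl((Ax)^{\ast}\sigma_{-\frac{i}{2}}^{\varphi}(y)\bigr) = \psi\bigl(x^{\ast}A^{\ast_{\op{GNS}}}(\sigma_{-\frac{i}{2}}^{\varphi}(y))\bigr) = \psi\bigl(x^{\ast}\sigma_{-\frac{i}{2}}^{\psi}(A^{\ast_{\op{GNS}}} y)\bigr) = \langle x, A^{\ast_{\op{GNS}}} y\rangle_{\op{KMS}}
\]
identifies $A^{\ast_{\op{GNS}}}$ as the KMS adjoint of $A$. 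There is no real obstacle; the only slightly delicate point is the analytic continuation of the intertwining, but this is immediate in finite dimensions where everything is given by explicit conjugations.
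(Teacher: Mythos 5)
Your proof is correct and follows essentially the route the paper intends: the paper's one-line proof ("follows easily from formula \eqref{Eq:SymKMS}") likewise rests on analytically continuing the intertwining relation to imaginary time and on the observation that the GNS adjoint then intertwines the modular groups as well. Your write-up simply makes these two ingredients explicit.
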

\begin{proof}
Follows easily from formula \eqref{Eq:SymKMS}.
\end{proof}

This lemma applies for example to the multiplication map $m: \op{B}\otimes \op{B} \to \op{B}$, because the equality $m\circ (\sigma_t \otimes \sigma_t) = \sigma_t \circ m$ just means that $\sigma_t$ is a homomorphism; we will now write down the formula for the adjoint.
\begin{prop}\label{Prop:mstar}Suppose that $M_n$ is equipped with a faithful positive functional $\psi(x):= \alpha \op{Tr} (\rho x)$, where $\alpha>0$ and $\rho \in M_n$ is a positive definite matrix. Then we have
\[
m^{\ast}(e_{ij}\rho^{-1}) = \frac{1}{\alpha}\sum_{k} e_{ik}\rho^{-1} \otimes e_{kj}\rho^{-1}
\]
and $mm^{\ast} = \frac{\op{Tr}(\rho^{-1})}{\alpha}$. Moreover $m^{\ast}$ is a bimodular map.
\end{prop}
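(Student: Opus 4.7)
The plan is as follows. Because the multiplication $m:M_n\otimes M_n\to M_n$ is a $*$-homomorphism and $\sigma_t$ is an algebra automorphism on each factor, we have $m\circ(\sigma_t\otimes \sigma_t) = \sigma_t\circ m$, so by the preceding lemma the KMS adjoint of $m$ coincides with its GNS adjoint. I will therefore carry out the computation using the more tractable GNS inner product $\langle x, y\rangle = \alpha\operatorname{Tr}(\rho x^* y)$, and its tensor-product analogue on $M_n\otimes M_n$ with density $\rho\otimes\rho$ and prefactor $\alpha^2$.

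The crucial observation is that the basis $\{e_{ij}\rho^{-1}\}_{i,j}$ is well adapted to this inner product: for any $z\in M_n$, $\langle e_{ij}\rho^{-1}, z\rangle = \alpha\operatorname{Tr}(e_{ji}z) = \alpha z_{ij}$, i.e.\ pairing with $e_{ij}\rho^{-1}$ simply extracts the $(i,j)$-entry. To verify the proposed formula, I pair $\frac{1}{\alpha}\sum_k e_{ik}\rho^{-1}\otimes e_{kj}\rho^{-1}$ with an arbitrary elementary tensor $x\otimes y$. Two applications of the entry-extraction identity produce $\alpha\sum_k x_{ik}y_{kj} = \alpha(xy)_{ij}$, which equals $\langle e_{ij}\rho^{-1}, m(x\otimes y)\rangle$. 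Since $\{e_{ij}\rho^{-1}\}$ spans $M_n$, this determines $m^*$.

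For $mm^*$ I work in a basis diagonalising $\rho$, writing $\rho^{-1}=\operatorname{diag}(\rho_1^{-1},\ldots,\rho_n^{-1})$. Then $e_{ik}\rho^{-1}e_{kj}\rho^{-1} = \rho_k^{-1}\rho_j^{-1}e_{ij}$, and summing over $k$ gives $\operatorname{Tr}(\rho^{-1})\cdot\rho_j^{-1}e_{ij} = \operatorname{Tr}(\rho^{-1})\,e_{ij}\rho^{-1}$, hence $mm^* = \frac{\operatorname{Tr}(\rho^{-1})}{\alpha}\operatorname{Id}$. Bimodularity of $m^*$ is then read off the explicit formula: left action by $a$ turns $e_{ij}\rho^{-1}$ into $\sum_p a_{pi}e_{pj}\rho^{-1}$ and the resulting sum pulls through the first tensor leg; right action by $b$ is handled by the rewrite $e_{ij}\rho^{-1}b = e_{ij}(\rho^{-1}b\rho)\rho^{-1}$, after which the inner $\rho^{-1}b\rho$ reassembles as $\rho^{-1}b$ on the second tensor leg. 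No genuine obstacle arises in any of these steps; the only thing to track carefully is the consistent placement of $\rho$ and $\rho^{-1}$ on either side of matrix units.
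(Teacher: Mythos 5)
Your proof is correct. The verification of the formula for $m^{\ast}$ and the computation of $mm^{\ast}$ follow essentially the same route as the paper: both reduce to the GNS inner product via the preceding lemma (the paper does this implicitly, you make it explicit) and both check the adjoint identity against a spanning family -- the paper pairs against matrix units $e_{pq}\otimes e_{rs}$, you pair against general elementary tensors using the entry-extraction identity $\langle e_{ij}\rho^{-1}, z\rangle = \alpha z_{ij}$, which is a mild repackaging of the same computation. Where you genuinely diverge is bimodularity. The paper never touches the explicit formula there: it proves $\langle a\otimes b, m^{\ast}(xy)\rangle = \langle a\otimes b, m^{\ast}(x)y\rangle$ abstractly from the KMS property of $\psi$, an argument that works for any faithful positive functional and, more importantly, survives in settings where no explicit Kraus-type formula for $m^{\ast}$ is available (this is the form of the argument reused later in the infinite-dimensional discussion). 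You instead read bimodularity off the formula via the rewrite $e_{ij}\rho^{-1}b = e_{ij}(\rho^{-1}b\rho)\rho^{-1}$ and the reassembly $\sum_p (\rho^{-1}b\rho)_{jp}\, e_{kp}\rho^{-1} = e_{kj}\rho^{-1}b$; I checked this and it is a complete and correct computation. Your version is more concrete and self-contained at the price of being tied to the matrix-unit presentation; the paper's version is the one that generalizes. Both are fine for the finite-dimensional statement as given.
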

\begin{proof}
Let us compute $\langle e_{pq} \otimes e_{rs}, m^{\ast}(e_{ij}\rho^{-1})\rangle$, using the formula for $m^{\ast}$ from the statement of the proposition:
\begin{align*}
\alpha\sum_{k} \op{Tr}(\rho e_{qp} e_{ik} \rho^{-1}) \op{Tr}(\rho e_{sr} e_{kj} \rho^{-1}) &= \alpha \sum_{k} \delta_{pi}\delta_{qk} \delta_{rk} \delta_{sj} \\
&= \alpha \delta_{pi} \delta_{qr} \delta_{sj}
\end{align*}
On the other hand, it should be equal to $\langle e_{pq} e_{rs}, e_{ij}\rho^{-1}\rangle$, i.e. $\delta_{qr}\langle e_{ps}, e_{ij}\rho^{-1}\rangle = \alpha \delta_{qr} \delta_{pi}\delta_{sj}$, which is the same.

To check that $mm^{\ast} = \frac{\op{Tr}(\rho^{-1})}{\alpha}$, simply note that $mm^{\ast}(e_{ij} \rho^{-1}) = \frac{1}{\alpha} \sum_{k} e_{ik} \rho^{-1} e_{kj} \rho^{-1}$ and $\sum_{k} e_{ik} \rho^{-1} e_{kj} = \op{Tr}(\rho^{-1}) e_{ij}$. We will now address the bimodularity of $m^{\ast}$. The case of the left action is straigthforward and for the right action we will use the KMS property. Indeed 
\begin{align*}
\langle a \otimes b, m^{\ast}(xy)\rangle &= \langle ab, xy\rangle = \psi (b^{\ast} a^{\ast} x y) \\
&= \psi (\sigma_{i}(y) b^{\ast} a^{\ast} x) = \psi ((a b \sigma_{-i}(y^{\ast}))^{\ast} x) \\
&= \langle a b \sigma_{-i}(y^{\ast}), x\rangle = \langle a \otimes b\sigma_{-i}(y^{\ast}), m^{\ast}(x)\rangle = \langle a \otimes b, m^{\ast}(x) y\rangle,  
\end{align*}
where for the last equality we need to write $m^{\ast}(x)$ as a finite sum $\sum_{i} s_{i}\otimes t_{i}$ and perform the same argument as in the second line, but in reverse.
\end{proof}
\begin{rem}
In the theory of quantum groups a concept of a \emph{$\delta$-form}, i.e. a state $\varphi$ for which $mm^{\ast} = \delta^2 \op{Id}$; this $\delta^2$ can be viewed as a sort of dimension in the non-tracial case.

Since we want to work with graphs, the natural measure to choose is the counting measure, which is not a probability measure, i.e. not a state, but it satisfies the condition $mm^{\ast}=\op{Id}$, which we will adopt also in the noncommutative case. If a positive functional $\varphi$ satisfies the condition $mm^{\ast}=\op{Id}$ then we will interpret the value $\varphi(\mathds{1})$ as the dimension of our quantum space. This choice of normalization will also make the definition of a quantum adjacency matrix cleaner.
\end{rem}
As a consequence of the previous proposition, the positive functionals satisfying $mm^{\ast}=\op{Id}$ are of the form $\op{Tr}(\rho^{-1}) \op{Tr}(\rho x)$; the extension to direct sums of matrix algebras is immediate. For further use, we record here the following simple lemma.
\begin{lem}\label{Lem:weightprojection}
Let $\psi$ be a positive functional on $M_n$ such that $mm^{\ast}=\op{Id}$ and let $P\in M_n$ be a non-zero projection. Then $\psi(P)\geqslant 1$.
\end{lem}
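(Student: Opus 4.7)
The plan is to combine the characterisation of functionals satisfying $mm^{\ast}=\op{Id}$ coming from Proposition~\ref{Prop:mstar} with a single application of Cauchy--Schwarz for the Hilbert--Schmidt inner product on $M_n$. By the proposition, any positive functional on $M_n$ with $mm^{\ast}=\op{Id}$ must be of the form $\psi(x)=\op{Tr}(\rho^{-1})\op{Tr}(\rho x)$ for some positive definite $\rho \in M_n$, so the problem reduces to showing the estimate
\[
\op{Tr}(\rho^{-1})\op{Tr}(\rho P)\geqslant 1
\]
for every non-zero projection $P$.

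The natural approach is to split $\rho P$ into $\rho^{1/2}\cdot \rho^{1/2}P$ and pair it with $\rho^{-1/2}$ via the trace. Concretely, applying Cauchy--Schwarz to $\rho^{-1/2}$ and $\rho^{1/2}P$ gives
\[
\bigl|\op{Tr}(\rho^{-1/2}\cdot \rho^{1/2}P)\bigr|^2 \leqslant \op{Tr}(\rho^{-1})\op{Tr}(P\rho P).
\]
The left-hand side collapses to $\op{Tr}(P)^2$ and, using $P^2=P$ and the cyclicity of the trace, the right-hand side is exactly $\op{Tr}(\rho^{-1})\op{Tr}(\rho P)=\psi(P)$. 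Thus $\psi(P)\geqslant \op{Tr}(P)^2$.

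To conclude, one just notes that $P$ is a non-zero projection in $M_n$, so its trace is a positive integer and in particular $\op{Tr}(P)\geqslant 1$; hence $\psi(P)\geqslant 1$, as required. I do not foresee any real obstacle: the only mild subtlety is the choice of splitting in the Cauchy--Schwarz step, which is essentially forced by wanting $\op{Tr}(\rho^{-1})$ and $\op{Tr}(\rho P)$ to appear on the right. As a sanity check, equality is attained when $\rho^{-1/2}$ is proportional to $\rho^{1/2}P$, i.e.\ when $P=\mathds{1}$ and $\rho$ is a scalar multiple of the identity, which is the familiar tracial normalization $\psi=\op{Tr}$.
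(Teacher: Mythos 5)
Your argument is correct and is essentially identical to the paper's proof: both reduce to $\psi(x)=\op{Tr}(\rho^{-1})\op{Tr}(\rho x)$ via Proposition~\ref{Prop:mstar} and apply Cauchy--Schwarz to the splitting $\op{Tr}(P)=\op{Tr}(\rho^{-1/2}\cdot\rho^{1/2}P)$, using $P^2=P$ and $\op{Tr}(P)\geqslant 1$. No issues.
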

\begin{proof}
We have $\psi(x) = \op{Tr}(\rho^{-1}) \op{Tr}(\rho x)$. By the Cauchy-Schwarz inequality we have
\[
1 \leqslant (\op{Tr}(P))^{2} = (\op{Tr}( \rho^{-\frac{1}{2}} \rho^{\frac{1}{2}} P))^2 \leqslant \op{Tr}(\rho^{-1}) \op{Tr}(\rho P^2) = \psi(P).
\]
\end{proof}

We will occasionally have to work with the opposite algebra $\op{B}^{\op{op}}$, which is the same vector space equipped with the opposite multiplication, i.e. $x^{\op{op}} y^{\op{op}} = (yx)^{\op{op}}$, where by $x^{op}$ we mean an element $x\in \op{B}$ viewed as an element of $\op{B}^{\op{op}}$. We will denote the identity map $\op{B} \ni x \mapsto x^{\op{op}} \in \op{B}^{\op{op}}$ by $\iota$. For a positive functional $\psi$ on $\op{B}$ we can define $\psi^{\op{op}}: \op{B}^{\op{op}} \to \mathbb{C}$ by $\psi^{\op{op}}(x^{\op{op}}) := \psi(x)$. We can relate the modular group of this functional to the original one, namely $\sigma_t^{\op{op}} (x^{\op{op}})  = (\sigma_{-t}(x))^{\op{op}}$. Using the KMS property of $\psi$ and $\psi^{\op{op}}$ we obtain
\begin{align*}
\psi(x \sigma_{-i}(y)) &= \psi(yx) = \psi^{\op{op}}((yx)^{\op{op}})= \psi^{\op{op}} (x^{\op{op}} y^{\op{op}}) \\
&= \psi^{\op{op}}(\sigma_{i}^{\op{op}}(y^{\op{op}}) (x^{\op{op}})),
\end{align*}
from which it follows that $\sigma_{i}^{\op{op}}(y^{\op{op}}) = (\sigma_{-i}(y))^{\op{op}}$ and that is enough to conclude. Most often we will work with the algebra $\op{B}\otimes \op{B}^{\op{op}}$, whose natural action on $\op{B}\otimes \op{B}$ will be denoted by $\#$, i.e. $(a\otimes b^{\op{op}})\# (c\otimes d):= ac \otimes db$.

In subsection \ref{Subsec:bimod} we will use the language of Hilbert $C^{\ast}$-modules. We refer the reader to the book \cite{MR2111973} for all the necessary information. We just recall here quickly the main definition.
\begin{defn}
Let $A$ be a $C^{\ast}$-algebra and let $X$ be a right $A$-module. We call $X$ a pre-Hilbert module if it is equipped with a sesquilinear map (linear in the second variable) $\langle \cdot, \cdot \rangle: X \times X \to A$ such that:
\begin{enumerate}[{\normalfont (i)}]
\item $\langle x,ya\rangle = \langle x, y\rangle a$ for all $x,y \in X$ and $a\in A$;
\item $\langle x,y\rangle^{\ast} = \langle y, x \rangle$;
\item $\langle x, x\rangle \geqslant 0$ and $\langle x, x\rangle = 0$ if and only if $x=0$.
\end{enumerate}
We call $X$ a Hilbert module if it is complete with respect to the norm $\|x\|:= \sqrt{\|\langle x, x\rangle\|_{A}}$. A Hilbert module $X$ is self-dual if any bounded right $A$-linear map 
$\phi: X \to A$ is of the form $\phi(x) = \langle y, x\rangle$ for some $y\in X$. 
\end{defn}

In sections \ref{Sec:covqadj} and \ref{Sec:qcayley} we will work with compact and discrete quantum groups. For information about compact quantum groups we refer to the excellent book \cite{MR3204665}. For information about discrete quantum groups we refer to \cite{MR1378538}, although sometimes it is more convenient to treat them as general locally compact quantum groups and use the results from \cite{MR1832993}. Here we recall just the basic definitions.
\begin{defn}
A compact quantum group $\mathbb{G}$ is described by a pair $(A,\Delta)$, where $A$ is a unital $C^{\ast}$-algebra and $\Delta: A \to A \otimes_{\op{min}} A$ is a $\ast$-homomorphism satisfying:
\begin{enumerate}[{\normalfont{(i)}}]
\item $(\op{Id}\otimes \Delta) \circ \Delta = (\Delta\otimes \op{Id})\circ \Delta$ (coassociativity);
\item the spaces $(A\otimes \mathds{1})\Delta(A):=\op{span}\{(a\otimes \mathds{1})\Delta(b), a,b \in A\}$ and $(\mathds{1}\otimes A)\Delta(A)$ are dense in $A\otimes_{\op{min}} A$.
\end{enumerate}
Typically the algebra $A$ will be denoted by $C(\mathbb{G})$ as it is meant to generalize the algebra of continuous functions on a compact group.
\end{defn}
One can prove that the Haar measure exists for compact quantum groups, and representation theory can be developed, in particular an analogue of the Peter-Weyl theorem holds; $\op{Irr}(\mathbb{G})$ will denote the set of equivalence classes of irreducible representations of $\mathbb{G}$.
\begin{defn}
Let $\ell^{\infty}(\bGamma):= \ell^{\infty}-\bigoplus_{\alpha \in \op{Irr}(\mathbb{G})} M_{n_{\alpha}}$. There is a unique $\ast$-homomorphism $\Delta_{\bGamma}: \ell^{\infty}(\bGamma) \to \ell^{\infty}(\bGamma)\overline{\otimes} \ell^{\infty}(\bGamma)$ such that $(\Delta_{\bGamma} \otimes \op{Id})(W) = W_{23} W_{13}$, where $W \in \ell^{\infty}(\bGamma) \overline{\otimes} L^{\infty}(\mathbb{G})$ is the unitary describing the right regular representation of $\mathbb{G}$. One can also show existence of left and right Haar measures, thus $(\ell^{\infty}(\bGamma), \Delta_{\bGamma})$ becomes a locally compact quantum group, which is the discrete dual of $\mathbb{G}$. All discrete quantum groups are of this form; one can also define them from scratch by saying that the von Neumann algebra of functions is a direct sum of matrix algebras and there exists an appropriate comultiplication on this algebra. The examples that we are most interested in arise as duals of compact quantum groups, so this is the perspective that we adopted.
\end{defn}
\section{Quantum graphs}\label{Sec:Qgraphs}
Throughout this section $\op{B}$ will be a finite dimensional $C^{\ast}$-algebra equipped with a faithful positive functional $\psi$ such that $mm^{\ast} = \op{Id}$. Thus a quantum graph structure will really be an extra structure on the pair $(\op{B},\psi)$, i.e. a \emph{quantum space}, not only on the algebra $\op{B}$. However, in most situations $\psi$ will be clear from the context and will be suppressed in the notation. 
\subsection{Quantum adjacency matrices versus projections}
We will discuss a correspondence between quantum adjacency matrices $A: \op{B} \to \op{B}$ and projections in $\op{B}\otimes \op{B}^{\op{op}}$. This correspondence has already been established (see \cite[Proposition 2.3]{MR4555986} and \cite[Theorem 5.17]{2203.08716}), so we will be brief but we would like to stress how the consistent use of the KMS inner product clarifies the situation.

\begin{defn}
A quantum adjacency matrix on a quantum space $(\op{B},\psi)$ is a completely positive map $A:\op{B}\to \op{B}$ such that $m(A\otimes A)m^{\ast} = A$. 
\end{defn}
\begin{rem}\label{Rem:starvspositive}
Some authors prefer to assume that $A$ is \emph{real} (see \cite{MR4481115}), i.e. $\ast$-preserving, but it is equivalent to complete positivity under the condition $m(A\otimes A)m^{\ast} = A$. Indeed, by Proposition \ref{Prop:Generalizedchoi} the Choi matrix of such an $A$ is an idempotent and self-adjointness of an idempotent is equivalent to its positivity. 

In the classical case no extra condition is required because there all adjacency matrices are automatically completely positive or, equivalently, all idempotents are self-adjoint. From the point of view of quantum relations, our choice is like choosing the orthogonal projections among all projections onto a given subspace, and this choice ensures that all the correspondences between different definitions of quantum graphs are one-to-one.
\end{rem}
If one mimics the tracial situation, the natural object to consider seems to be $\widetilde{P}:=(A\otimes \op{Id})m^{\ast}(\mathds{1}) \in \op{B} \otimes \op{B}^{\op{op}}$, which is an idempotent. In the terminology of \cite{2203.08716}, $\widetilde{P}$ corresponds to $A$ via the map $\Psi^{\prime}_{0,0}$, which maps the rank one operator $|a\rangle\langle b|$ to $a \otimes b^{\ast} \in \op{B}\otimes \op{B}^{\op{op}}$. However, the inner product in that work is the GNS inner product and we will see now how the situation changes if we choose to work with the KMS inner product instead.
\begin{lem}\label{Lem:choirep}
Suppose that $\op{B}\simeq \bigoplus_{\alpha} M_{n_{\alpha}}$ is equipped with a weight $\psi:= \bigoplus_{\alpha} \op{Tr}(\rho_{\alpha}^{-1}) \op{Tr}(\rho_{\alpha} \cdot)$. Let $\Psi^{\op{KMS}}: \op{End}(\op{B},\langle\cdot,\cdot\rangle_{\op{KMS}}) \to \op{B} \otimes \op{B}^{\op{op}}$ be defined via $|a\rangle \langle b| \mapsto a \otimes (b^{\ast})^{\op{op}}$. Then for each $A \in \op{End}(\op{B}, \langle\cdot,\cdot\rangle_{\op{KMS}})$ we have
\begin{equation}\label{Eq:ProjfromAdj}
\Psi^{\op{KMS}}(A) = \sum_{\alpha} \frac{1}{\op{Tr}(\rho_{\alpha}^{-1})} \sum_{i,j} A\left(\rho_{\alpha}^{-\frac{1}{4}} e_{ij}^{\alpha} \rho_{\alpha}^{-\frac{1}{4}}\right) \otimes \left(\rho_{\alpha}^{-\frac{1}{4}} e_{ji}^{\alpha} \rho_{\alpha}^{-\frac{1}{4}}\right)^{\op{op}}.
\end{equation}
\end{lem}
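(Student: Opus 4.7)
The plan is to pick a convenient orthonormal basis of $(\op{B},\langle \cdot,\cdot\rangle_{\op{KMS}})$, expand $A$ in the associated rank-one basis of $\op{End}(\op{B})$, and then apply $\Psi^{\op{KMS}}$ termwise. First I would reduce to a single block: since the decomposition $\op{B}\simeq \bigoplus_{\alpha} M_{n_{\alpha}}$ is orthogonal for the KMS inner product (the modular group preserves each summand and different blocks annihilate under $\psi$) and since $\Psi^{\op{KMS}}$ clearly respects it, it suffices to prove the formula for one matrix block $M_n$ with weight $\psi(x) = \op{Tr}(\rho^{-1})\op{Tr}(\rho x)$.

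Formula \eqref{Eq:SymKMS} rewrites the inner product as
\[
\langle x, y\rangle_{\op{KMS}} = \op{Tr}(\rho^{-1})\, \op{Tr}\!\left((\rho^{1/4} x \rho^{1/4})^{\ast}(\rho^{1/4} y \rho^{1/4})\right),
\]
so the map $x \mapsto \rho^{1/4} x \rho^{1/4}$ is an isometry of $(M_n,\langle\cdot,\cdot\rangle_{\op{KMS}})$ onto $M_n$ equipped with $\op{Tr}(\rho^{-1})$ times the Hilbert--Schmidt inner product. Since $\{e_{ij}\}$ is HS-orthonormal, this tells me that the natural candidate basis is
\[
f_{ij} := \frac{1}{\sqrt{\op{Tr}(\rho^{-1})}}\, \rho^{-1/4} e_{ij} \rho^{-1/4},
\]
and a one-line check confirms $\langle f_{ij}, f_{kl}\rangle_{\op{KMS}} = \delta_{ik}\delta_{jl}$.

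Given any ONB, a standard fact gives the decomposition $A = \sum_{ij} |A f_{ij}\rangle\langle f_{ij}|$ for every $A\in \op{End}(\op{B},\langle\cdot,\cdot\rangle_{\op{KMS}})$. I would then apply $\Psi^{\op{KMS}}$ termwise, invoking its definition $|a\rangle\langle b|\mapsto a\otimes (b^{\ast})^{\op{op}}$ together with the identity $f_{ij}^{\ast} = \frac{1}{\sqrt{\op{Tr}(\rho^{-1})}}\,\rho^{-1/4} e_{ji} \rho^{-1/4}$ (valid because $\rho^{-1/4}$ is self-adjoint). The two prefactors $1/\sqrt{\op{Tr}(\rho^{-1})}$ combine into the $1/\op{Tr}(\rho^{-1})$ in \eqref{Eq:ProjfromAdj}, and linearity of $A$ lets me pull the scalar out of the first tensor slot. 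Summing over $\alpha$ reassembles the direct sum and yields the stated formula.

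The main obstacle is purely conceptual: identifying the correct orthonormal basis. Once \eqref{Eq:SymKMS} is read as the hint to conjugate matrix units by $\rho^{-1/4}$ symmetrically on both sides, the seemingly strange exponent $1/4$ appearing in \eqref{Eq:ProjfromAdj} becomes transparent, and everything else is formal. This symmetric conjugation also ensures $f_{ij}^{\ast} = f_{ji}$, which foreshadows the clean tensor-flip behaviour of the KMS adjoint and is ultimately why the KMS inner product gives a tidier correspondence than the GNS one.
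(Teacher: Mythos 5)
Your proof is correct and is essentially the paper's argument read in the opposite direction: the paper verifies \eqref{Eq:ProjfromAdj} by evaluating its right-hand side on rank-one operators $|a\rangle\langle b|$ and recovering $a\otimes (b^{\ast})^{\op{op}}$, while you derive the formula from the resolution of the identity in the KMS-orthonormal basis $f_{ij}^{\alpha}=\op{Tr}(\rho_{\alpha}^{-1})^{-1/2}\rho_{\alpha}^{-1/4}e_{ij}^{\alpha}\rho_{\alpha}^{-1/4}$ --- precisely the observation the paper records in the remark immediately following the lemma. All the ingredients you use (orthonormality of the $f_{ij}^{\alpha}$ via \eqref{Eq:SymKMS}, the identity $(f_{ij}^{\alpha})^{\ast}=f_{ji}^{\alpha}$, linearity and termwise application of $\Psi^{\op{KMS}}$) check out.
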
 
\begin{proof}
It suffices to check the formula \eqref{Eq:ProjfromAdj} for the rank one operators $|a\rangle\langle b|$. In this case we have ($b_{\alpha}$ will denote the component of $b$ living in the summand $M_{n_\alpha}$)
\[
\Psi^{\op{KMS}}(|a\rangle\langle b|) = \sum_{\alpha}\sum_{i,j} a \op{Tr}\left( b^{\ast}_{\alpha} \rho_{\alpha}^{\frac{1}{4}} e_{ij}^{\alpha}\rho_{\alpha}^{\frac{1}{4}} \right) \otimes \left(\rho_{\alpha}^{-\frac{1}{4}}e_{ji}^{\alpha} \rho_{\alpha}^{-\frac{1}{4}}\right)^{\op{op}}.
\]
The trace $\op{Tr}\left( b^{\ast}_{\alpha} \rho_{\alpha}^{\frac{1}{4}} e_{ij}^{\alpha}\rho_{\alpha}^{\frac{1}{4}} \right)$ just computes the $(j,i)$ entry of $\rho_{\alpha}^{\frac{1}{4}} b^{\ast}_{\alpha} \rho_{\alpha}^{\frac{1}{4}}$, hence we obtain
\[
\Psi^{\op{KMS}}(|a\rangle\langle b|) = \sum_{\alpha}\sum_{i,j} a \left(\rho_{\alpha}^{\frac{1}{4}} b^{\ast}_{\alpha} \rho_{\alpha}^{\frac{1}{4}}\right)_{ji} \otimes \left(\rho_{\alpha}^{-\frac{1}{4}}e_{ji}^{\alpha} \rho_{\alpha}^{-\frac{1}{4}}\right)^{\op{op}} = a \otimes \sum_{\alpha}\sum_{i,j} \left(\rho_{\alpha}^{-\frac{1}{4}} \left(\rho_{\alpha}^{\frac{1}{4}} b^{\ast}_{\alpha} \rho_{\alpha}^{\frac{1}{4}}\right)_{ji} e_{ji}^{\alpha} \rho_{\alpha}^{-\frac{1}{4}}\right)^{\op{op}}.
\]
For any matrix $X$ the sum $\sum_{i,j} X_{ij} e_{ij}$ is clearly equal to $X$, so the second leg reduces to $\sum_{\alpha} b^{\ast}_{\alpha} = b^{\ast}$.
\end{proof}
\begin{rem}
The elements $\frac{1}{\sqrt{\op{Tr}(\rho_{\alpha}^{-1})}} \rho_{\alpha}^{-\frac{1}{4}}e_{ij}^{\alpha} \rho_{\alpha}^{-\frac{1}{4}}$ form an orthonormal basis of $\op{B}$ with respect to the KMS inner product, so $\Psi^{\op{KMS}}(A)$ can be thought of as the matrix of $A$ with respect to this orthonormal basis.
\end{rem}
We will now relate $P:= \Psi^{\op{KMS}}(A)$ with $\widetilde{P}= (A\otimes \iota)m^{\ast}(\mathds{1})$. Before that we need a lemma (which we only write down in the case of a single matrix block, but the generalization to finite dimensional $C^{\ast}$-algebras is obvious).
\begin{lem}\label{Lem:modularidem}
Suppose $\op{B}=M_n$ is equipped with a positive functional $\psi(x):= \op{Tr}(\rho^{-1})\op{Tr}(\rho x)$, where $\rho \in M_{n}$ is positive definite. Let $\varepsilon:= m^{\ast}(\mathds{1}) = \frac{1}{\op{Tr}(\rho^{-1})} \sum_{i,j} e_{ij} \rho^{-1} \otimes e_{ji}$. Then for each $z,w \in \mathbb{C}$ we have $\varepsilon = \frac{1}{\op{Tr}(\rho^{-1})} \sum_{i,j} \rho^{z} e_{ij} \rho^{w-1} \otimes \rho^{-w} e_{ij} \rho^{-z}$. 
\end{lem}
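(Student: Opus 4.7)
The plan is to express both sides as algebraic manipulations of the flip element $\Omega := \sum_{i,j} e_{ij} \otimes e_{ji} \in M_n \otimes M_n$. Regarded as an operator on $\mathbb{C}^n \otimes \mathbb{C}^n$, $\Omega$ implements the coordinate swap, which immediately yields the intertwining relation
\[
\Omega\,(A \otimes B) \;=\; (B \otimes A)\,\Omega \qquad \text{for all } A, B \in M_n,
\]
and this is essentially the only ingredient needed.

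First I would recast the stated expression for $\varepsilon$ in these terms: pulling the $\rho^{-1}$ out of the sum gives $\op{Tr}(\rho^{-1})\,\varepsilon = \Omega\,(\rho^{-1}\otimes \mathds{1})$. Next I would rewrite the proposed formula similarly. Interpreting the second tensor leg as $e_{ji}$ (which is forced by consistency with $z=w=0$), one recognizes
\[
\sum_{i,j}\rho^{z}e_{ij}\rho^{w-1}\otimes\rho^{-w}e_{ji}\rho^{-z} \;=\; (\rho^{z}\otimes \rho^{-w})\,\Omega\,(\rho^{w-1}\otimes \rho^{-z}).
\]

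The remaining computation is a two-step dance with the intertwining relation. I would first push $\Omega$ past the right factor,
\[
(\rho^{z}\otimes\rho^{-w})\,\Omega\,(\rho^{w-1}\otimes\rho^{-z}) \;=\; (\rho^{z}\rho^{-z}\otimes \rho^{-w}\rho^{w-1})\,\Omega \;=\; (\mathds{1}\otimes \rho^{-1})\,\Omega,
\]
and then move $\Omega$ back through the remaining factor, $(\mathds{1}\otimes \rho^{-1})\,\Omega = \Omega\,(\rho^{-1}\otimes \mathds{1})$, which equals $\op{Tr}(\rho^{-1})\,\varepsilon$.

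I do not foresee any real obstacle; the content of the lemma is just the telescoping $\rho^{z}\rho^{-z} = \mathds{1}$ and $\rho^{-w}\rho^{w-1} = \rho^{-1}$, which becomes visible only after one rewrites the sums in terms of $\Omega$. A heavier alternative would be to diagonalize $\rho$ and check that the two sides agree entry by entry, but this would be considerably less transparent.
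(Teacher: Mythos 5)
Your proof is correct, and it takes a genuinely different route from the paper's. You reduce everything to the swap operator $\Omega=\sum_{i,j}e_{ij}\otimes e_{ji}$ and the intertwining identity $\Omega(A\otimes B)=(B\otimes A)\Omega$, after which the claim is a one-line telescoping of exponents; every step checks out, including your (correct) reading of the second tensor leg as $e_{ji}$ rather than the $e_{ij}$ printed in the statement, which is a typo --- the paper's own derivation also lands on $e_{ji}$. The paper instead deduces the identity from two structural facts established in Proposition \ref{Prop:mstar}: that $m^{\ast}$ intertwines the modular group, giving $\varepsilon=(\sigma_{-ix}\otimes\sigma_{-ix})\varepsilon$, and that $m^{\ast}$ is a bimodule map, giving $\varepsilon=\rho^{y}\varepsilon\rho^{-y}$; combining these and substituting $z=x+y$, $w=-x$ yields the formula. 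Your argument is more elementary and self-contained (it uses nothing beyond the explicit formula for $\varepsilon$ given in the statement), whereas the paper's argument is more conceptual: it exhibits the identity as exactly the conjunction of modular-group invariance and bimodularity of $m^{\ast}(\mathds{1})$, which is the form in which the lemma is actually used later (e.g.\ in Lemma \ref{Lem:Representingproj}) and which generalizes beyond a single matrix block without recomputation.
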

\begin{proof}
Treat $\varepsilon$ as an element of $\op{B}\otimes \op{B}$. Note first that $(\sigma_{z})^{\ast} = \sigma_{-\overline{z}}$ because
\[
\langle x, \sigma_{z} y\rangle = \psi( x^{\ast} \sigma_{z}(y)) = \psi (\sigma_{-z}(x^{\ast}) y) = \psi((\sigma_{-\overline{z}}(x))^{\ast} y) = \langle \sigma_{-\overline{z}}(x), y\rangle.
\]
By taking the adjoint of the equation $m\circ (\sigma_{z}\otimes \sigma_{z}) = \sigma_{z}\circ m$ (for all $z\in\mathbb{C}$), we get that $m^{\ast}$ commutes with the modular group, i.e. $(\sigma_{z}\otimes \sigma_{z})\circ m^{\ast} = m^{\ast} \circ \sigma_{z}$ for all $z\in \mathbb{C}$. From this we get $\varepsilon = (\sigma_{-ix} \otimes \sigma_{-ix})\varepsilon$ for all $x\in \mathbb{C}$. Moreover, $m^{\ast}$ is a bimodular map, therefore $m^{\ast}(\mathds{1}) = m^{\ast}(\rho^{y} \mathds{1} \rho^{-y}) = \rho^{y} \varepsilon \rho^{-y}$ for all $y\in \mathbb{C}$. Combining the two, we obtain
\[
\varepsilon = \frac{1}{\op{Tr}(\rho^{-1})} \sum_{i,j} \rho^{x+y} e_{ij} \rho^{-1-x} \otimes \rho^{x} e_{ji} \rho^{-y-x}.
\]
Now put $z=x+y$ and $w=-x$. 
\end{proof}
\begin{lem}\label{Lem:Representingproj}
Let $A:\op{B} \to \op{B}$. We have $\Psi^{\op{KMS}}(A) = (A\otimes (\iota\circ \sigma_{-\frac{i}{2}}))m^{\ast}(\mathds{1})$, where $\iota: \op{B}\to \op{B}^{\op{op}}$ is the identity map $\iota(x):= x^{\op{op}}$.

Since $\iota \circ \sigma_{-\frac{i}{2}} = \sigma^{\op{op}}_{\frac{i}{2}}\circ \iota$, we get $P = (\op{Id} \otimes \sigma^{\op{op}}_{\frac{i}{2}})(\widetilde{P})$.
\end{lem}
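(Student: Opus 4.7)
The plan is to directly verify the formula $\Psi^{\op{KMS}}(A) = (A \otimes (\iota \circ \sigma_{-\frac{i}{2}})) m^{\ast}(\mathds{1})$ by matching it against the explicit expression for $\Psi^{\op{KMS}}(A)$ given in Lemma \ref{Lem:choirep}. For transparency I would first reduce to a single matrix block $\op{B} = M_n$, since everything is linear in the block decomposition.

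First I would apply Lemma \ref{Lem:modularidem} with a specific choice of parameters tuned to produce the symmetric factors $\rho^{-\frac{1}{4}}$ on the left-hand side of Lemma \ref{Lem:choirep}. Setting $z = -\frac{1}{4}$ and $w = \frac{3}{4}$ gives
\[
m^{\ast}(\mathds{1}) \;=\; \frac{1}{\op{Tr}(\rho^{-1})} \sum_{i,j} \rho^{-\frac{1}{4}} e_{ij} \rho^{-\frac{1}{4}} \otimes \rho^{-\frac{3}{4}} e_{ji} \rho^{\frac{1}{4}}.
\]
Applying $A$ to the first leg reproduces the first tensor factor of the formula in Lemma \ref{Lem:choirep}, so it remains to check that $\iota \circ \sigma_{-\frac{i}{2}}$ sends $\rho^{-\frac{3}{4}} e_{ji} \rho^{\frac{1}{4}}$ to $(\rho^{-\frac{1}{4}} e_{ji} \rho^{-\frac{1}{4}})^{\op{op}}$. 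Since $\sigma_{-\frac{i}{2}}(x) = \rho^{\frac{1}{2}} x \rho^{-\frac{1}{2}}$, a direct computation gives $\sigma_{-\frac{i}{2}}(\rho^{-\frac{3}{4}} e_{ji} \rho^{\frac{1}{4}}) = \rho^{-\frac{1}{4}} e_{ji} \rho^{-\frac{1}{4}}$, and then $\iota$ moves it into $\op{B}^{\op{op}}$. This matches Lemma \ref{Lem:choirep} exactly.

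The second statement is essentially bookkeeping on the opposite algebra. The identity $\sigma^{\op{op}}_{t}(x^{\op{op}}) = (\sigma_{-t}(x))^{\op{op}}$ was recorded in the preliminaries, which specialised at $t = \frac{i}{2}$ gives $\sigma^{\op{op}}_{\frac{i}{2}} \circ \iota = \iota \circ \sigma_{-\frac{i}{2}}$. Then using the defining formula $\widetilde{P} = (A \otimes \iota) m^{\ast}(\mathds{1})$, one obtains
\[
(\op{Id} \otimes \sigma^{\op{op}}_{\frac{i}{2}})(\widetilde{P}) \;=\; (A \otimes (\sigma^{\op{op}}_{\frac{i}{2}} \circ \iota)) m^{\ast}(\mathds{1}) \;=\; (A \otimes (\iota \circ \sigma_{-\frac{i}{2}})) m^{\ast}(\mathds{1}) \;=\; P,
\]
which is the desired equation.

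The only step requiring care is the bookkeeping of exponents of $\rho$ in the first part, since Lemma \ref{Lem:modularidem} offers a two-parameter family of expressions for $m^{\ast}(\mathds{1})$ and one needs to pick the pair that distributes the $\rho$-powers symmetrically on the first tensor factor while leaving for the second factor precisely what $\iota \circ \sigma_{-\frac{i}{2}}$ will correct into the KMS-symmetric form. Once that selection is made, the verification is a one-line computation.
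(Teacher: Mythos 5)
Your proposal is correct and follows essentially the same route as the paper: rewrite $m^{\ast}(\mathds{1})$ via Lemma \ref{Lem:modularidem} with the parameters chosen so the first leg becomes $\rho^{-\frac{1}{4}}e_{ij}\rho^{-\frac{1}{4}}$, observe that $\sigma_{-\frac{i}{2}}$ turns the second leg $\rho^{-\frac{3}{4}}e_{ji}\rho^{\frac{1}{4}}$ into $\rho^{-\frac{1}{4}}e_{ji}\rho^{-\frac{1}{4}}$, and compare with Lemma \ref{Lem:choirep}; you also correctly read $e_{ji}$ in the second tensor factor of Lemma \ref{Lem:modularidem}, fixing the small index typo in its statement. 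The bookkeeping for the second claim via $\sigma^{\op{op}}_{t}\circ\iota=\iota\circ\sigma_{-t}$ is exactly what the paper intends.
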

\begin{proof}
From the previous lemma we get that 
\[
m^{\ast}(\mathds{1}) = \sum_{\alpha} \frac{1}{\op{Tr}(\rho_{\alpha}^{-1})}\sum_{i,j} \rho_{\alpha}^{-\frac{1}{4}} e_{ij}^{\alpha} \rho_{\alpha}^{-\frac{1}{4}} \otimes \rho_{\alpha}^{-\frac{3}{4}} e_{ji}^{\alpha} \rho_{\alpha}^{\frac{1}{4}}.
\]
Note that 
\[
\rho_{\alpha}^{-\frac{3}{4}} e_{ji}^{\alpha} \rho_{\alpha}^{\frac{1}{4}} = \rho_{\alpha}^{-\frac{1}{2}} (\rho_{\alpha}^{-\frac{1}{4}} e_{ji}^{\alpha} \rho_{\alpha}^{-\frac{1}{4}}) \rho_{\alpha}^{\frac{1}{2}} = \sigma_{\frac{i}{2}}(\rho_{\alpha}^{-\frac{1}{4}} e_{ji}^{\alpha} \rho_{\alpha}^{-\frac{1}{4}}),
\]
which ends the proof.
\end{proof}

We are now ready to summarize the properties of $\Psi^{\op{KMS}}$.
\begin{prop}\label{Prop:Generalizedchoi}
The map $\Psi^{\op{KMS}}: \op{End}(\op{B},\langle\cdot,\cdot\rangle_{\op{KMS}}) \to \op{B}\otimes \op{B}^{\op{op}}$ establishes a one-to-one correspondence between linear maps on $\op{B}$ and elements of $\op{B}\otimes \op{B}^{\op{op}}$. We have:
\begin{enumerate}[{\normalfont (i)}]
\item $\Psi^{\op{KMS}}(A)$ is an idempotent if and only if $A$ satisfies $m(A\otimes A)m^{\ast} = A$;
\item $\Psi^{\op{KMS}}(A)$ is self-adjoint if and only if $A$ is $\ast$-preserving;
\item $\Psi^{\op{KMS}}(A)$ is positive if and only if $A$ is completely positive;
\item $\Psi^{\op{KMS}}(A)$ is invariant under the flip map $\Sigma: \op{B}\otimes \op{B}^{\op{op}} \to \op{B}\otimes \op{B}^{\op{op}}$ if and only if the KMS adjoint $A^{\ast}_{\op{KMS}}$ is equal to the map $\overline{A}(x):= (A(x^{\ast}))^{\ast}$.
\end{enumerate}
In particular, if $A: \op{B}\to \op{B}$ is a quantum adjacency matrix then it is self-adjoint with respect to the KMS inner product if and only if $\Sigma(\Psi^{\op{KMS}}(A)) = \Psi^{\op{KMS}}(A)$.

\end{prop}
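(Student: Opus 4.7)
First, I would establish bijectivity of $\Psi^{\op{KMS}}$ directly from its definition: the rank-one operators $|f_s\rangle\langle f_t|$ for $\{f_s\}$ the KMS-orthonormal basis of $\op{B}$ noted in the remark after Lemma \ref{Lem:choirep} form a basis of $\op{End}(\op{B})$, and $\Psi^{\op{KMS}}$ sends them to $f_s\otimes (f_t^{\ast})^{\op{op}}$, which is visibly a basis of $\op{B}\otimes \op{B}^{\op{op}}$. All four equivalences will then be obtained by reading off what each condition on the $\op{B}\otimes \op{B}^{\op{op}}$ side means for a rank-one operator and extending (bi)linearly.

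The linchpin for (i) is the multiplicativity identity
\[
\Psi^{\op{KMS}}(m(A\otimes B)m^{\ast}) = \Psi^{\op{KMS}}(A)\cdot \Psi^{\op{KMS}}(B)
\]
in $\op{B}\otimes \op{B}^{\op{op}}$. I would verify it on $A = |a\rangle\langle b|$, $B = |c\rangle\langle d|$: using $\langle b\otimes d, m^{\ast}(x)\rangle_{\op{KMS}} = \langle bd, x\rangle_{\op{KMS}}$ gives $m(A\otimes B)m^{\ast} = |ac\rangle\langle bd|$, with image $ac\otimes ((bd)^{\ast})^{\op{op}}$, and this agrees with the product $(a\otimes (b^{\ast})^{\op{op}})(c\otimes (d^{\ast})^{\op{op}}) = ac\otimes (d^{\ast}b^{\ast})^{\op{op}}$ since multiplication in $\op{B}^{\op{op}}$ is reversed. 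Setting $B=A$ gives (i). For (ii), computing $\overline{|a\rangle\langle b|}(x) = (|a\rangle\langle b|(x^{\ast}))^{\ast}$ and applying the KMS symmetry $\langle u, v\rangle_{\op{KMS}} = \langle v^{\ast}, u^{\ast}\rangle_{\op{KMS}}$ yields $\overline{|a\rangle\langle b|} = |a^{\ast}\rangle\langle b^{\ast}|$, hence $\Psi^{\op{KMS}}(\overline A) = a^{\ast}\otimes b^{\op{op}} = (a\otimes (b^{\ast})^{\op{op}})^{\ast}$; thus $P^{\ast} = P$ iff $\overline A = A$, i.e., $A$ is $\ast$-preserving. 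For (iv), the KMS adjoint of $|a\rangle\langle b|$ is $|b\rangle\langle a|$, so $\overline{A^{\ast}_{\op{KMS}}}$ with $A = |a\rangle\langle b|$ is $|b^{\ast}\rangle\langle a^{\ast}|$, whose image $b^{\ast}\otimes a^{\op{op}}$ equals $\Sigma(a\otimes (b^{\ast})^{\op{op}}) = \Sigma(P)$; so $\Sigma(P) = P$ iff $\overline{A^{\ast}_{\op{KMS}}} = A$, i.e., $A^{\ast}_{\op{KMS}} = \overline A$.

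For (iii) I would use the formula $\Psi^{\op{KMS}}(A) = (A\otimes \op{Id}_{\op{B}^{\op{op}}})(\pi)$, where $\pi := \Psi^{\op{KMS}}(\op{Id})$ is a self-adjoint idempotent by (i) (since $mm^{\ast} = \op{Id}$) and (ii), hence a projection. Given a Kraus form $A = \sum_k V_k(\cdot) V_k^{\ast}$ for a CP map, a direct calculation shows $\Psi^{\op{KMS}}(A) = \sum_k (V_k\otimes \mathds{1})\pi(V_k^{\ast}\otimes \mathds{1})\geqslant 0$, settling CP $\Rightarrow P\geqslant 0$. The reverse implication is where the real work sits; I would invoke Lemma \ref{Lem:Representingproj}, which relates $P$ to the GNS Choi matrix $\widetilde P$ via the modular twist $\op{Id}\otimes \sigma^{\op{op}}_{\frac{i}{2}}$, and then import the corresponding Choi-type theorem from \cite[Theorem 5.17]{2203.08716}. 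For the final statement, combine Remark \ref{Rem:starvspositive} (a quantum adjacency matrix is automatically $\ast$-preserving, so $\overline A = A$) with (iv) to conclude $\Sigma(P) = P \Leftrightarrow A^{\ast}_{\op{KMS}} = A$. The main obstacle is the converse in (iii): Kraus-form manipulation yields only one direction, and the other requires either a $C^{\ast}$-algebraic factorization of $P$ matched against the canonical projection $\pi$, or a reduction to the cited GNS-side result.
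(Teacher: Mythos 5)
Your argument is correct and, on points (i), (ii) and (iv), actually more self-contained than the paper's own proof, which disposes of (i)--(iii) by citation and only writes out (iv). The genuine difference is in (i): the paper invokes \cite[Proposition 2.3(2)]{MR4555986} for the GNS-side element $\widetilde{P}=(A\otimes\iota)m^{\ast}(\mathds{1})$ and then transfers idempotency to $P$ via Lemma \ref{Lem:Representingproj}, using that $\op{Id}\otimes\sigma^{\op{op}}_{\frac{i}{2}}$ is multiplicative; you instead prove the stronger identity $\Psi^{\op{KMS}}(m(A\otimes B)m^{\ast})=\Psi^{\op{KMS}}(A)\cdot\Psi^{\op{KMS}}(B)$ directly on rank-one operators, which specializes to (i) and makes the algebra-homomorphism property of $\Psi^{\op{KMS}}$ explicit. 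This computation is legitimate because the KMS and GNS adjoints of $m$ coincide (so $\langle b\otimes d,m^{\ast}(x)\rangle_{\op{KMS}}=\langle bd,x\rangle_{\op{KMS}}$ really does give $m(|a\rangle\langle b|\otimes|c\rangle\langle d|)m^{\ast}=|ac\rangle\langle bd|$), and it buys a cleaner, citation-free route. Your (iv) reproduces the paper's computation ($\overline{|a\rangle\langle b|}=|a^{\ast}\rangle\langle b^{\ast}|$ and $(|a\rangle\langle b|)^{\ast}_{\op{KMS}}=|b\rangle\langle a|$, the two operations commuting), and (ii) falls out of the same calculation. The one place you are no better off than the paper --- but also no worse --- is the converse of (iii): as you note, ``importing'' \cite[Theorem 5.17]{2203.08716} through Lemma \ref{Lem:Representingproj} is not a one-line transfer, because $\op{Id}\otimes\sigma^{\op{op}}_{\frac{i}{2}}$ is a homomorphism but not a $\ast$-map, so it need not carry positive elements to positive elements; closing this requires either rerunning the Choi argument with the twisted recovery formula $Ax=(\op{Id}\otimes\psi)(P\#(\mathds{1}\otimes\sigma_{\frac{i}{2}}(x)))$ and the KMS property (in the style of the paper's later bounded-degree proposition), or checking compatibility of the cited GNS-side statement with the twist. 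Since the paper is equally terse there, this is a shared gloss rather than a flaw specific to your proof.
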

\begin{proof}
The first bullet point follows from \cite[Proposition 2.3(2)]{MR4555986}, which states that $(A\otimes \iota)m^{\ast}(\mathds{1})$ is an idempotent iff $m(A\otimes A)m^{\ast} = A$ and Lemma \ref{Lem:Representingproj}, from which we infer that $\Psi^{\op{KMS}}(A)$ and $(A\otimes \iota)m^{\ast}(\mathds{1})$ are related by a multiplicative map (the modular group). The only new statement is the last one. Note that $\Psi^{\op{KMS}}(|a\rangle\langle b|) = a\otimes (b^{\ast})^{\op{op}}$, hence the flip is equal to $b^{\ast} \otimes a^{\op{op}}$, which is equal to $\Psi^{\op{KMS}}(|b^{\ast}\rangle\langle a^{\ast}|)$. Let us compute $\overline{|a\rangle\langle b|}(c) = (a \langle b, c^{\ast}\rangle_{\op{KMS}})^{\ast} = a^{\ast} \langle c^{\ast}, b\rangle_{\op{KMS}} = a^{\ast} \langle b^{\ast} , c \rangle_{\op{KMS}}$, using the properties of the KMS inner product. Using the fact that $\left(| a \rangle \langle b|\right)^{\ast} = |b\rangle \langle a|$, we conclude that $\Sigma (\Psi^{\op{KMS}}(A)) = \Psi^{\op{KMS}}(\overline{A}^{\ast}_{\op{KMS}})$, where $\overline{A}^{\ast}_{\op{KMS}}$ should be viewed as the transpose of $A$.
\end{proof}

We will end this subsection with some comments about recovering $A$ from $P:=\Psi^{\op{KMS}}(A)$. By \cite[Proposition 2.3 (1)]{MR4555986} we have $Ax = (\op{Id} \otimes \psi)(P_1(1\otimes x))$, where $P_1 := (A\otimes \op{Id})m^{\ast}(\mathds{1})$ is treated is an element of $\op{B} \otimes \op{B}$ such that $\widetilde{P} = (\op{Id} \otimes \iota)P_1$, thus we can rewrite this formula as $Ax = (\op{Id}\otimes \varphi)(1\otimes x)\# \widetilde{P})$. Recall from Lemma \ref{Lem:Representingproj} that $\widetilde{P} = (\op{Id} \otimes \sigma^{\op{op}}_{-\frac{i}{2}})(P)$. This is to be expected, because if we have a positive element $x \in \op{C}$, where $\op{C}$ is a von Neumann algebra equipped with a normal faithful positive functional $\varphi$, and we want to use it as a density of a positive functional, then we have to choose $\varphi (\cdot \sigma^{\varphi}_{-\frac{i}{2}}(x))$, not $\varphi(\cdot x)$. There is an analogous formula involving the left multiplication.

\subsection{Projections versus bimodules}\label{Subsec:bimod}

Given a representation of $\op{B}$ on a Hilbert space $\HH$ (possibly infinite dimensional), we will now establish a one-to-one correspondence between weak$^{\ast}$ closed $\op{B}'$-bimodules inside $\op{B}(\HH)$ and projections in $\op{B}\otimes \op{B}^{\op{op}}$. Before that we need a quick discussion of operator valued weights. We will not go into the details, let us just say that operator valued weights are to conditional expectations what weights are to states, i.e. they satisfy a positivity and bimodularity conditions but are in general unbounded. In this work we will only see weights and bona fide conditional expectations, that is why we decided to avoid the unnecessary technical details.

In \cite[Theorem 6.13]{MR549119} Haagerup established for a pair of von Neumann algebras $\mathsf{N} \subset \mathsf{M}\subset \op{B}(\HH)$ a bijection between normal, faithful, semifinite (abbreviated henceforth nfs) operator valued weights from $\mathsf{M}$ to $\mathsf{N}$ and nfs operator valued weights from $\mathsf{N}'$ to $\mathsf{M}'$. In particular, any weight $\psi$ on a von Neumann algebra $\mathsf{M}$ gives rise to an operator valued weight $\psi^{-1}$ from $\op{B}(\HH)$ to $\mathsf{M}'$. In \cite[Corollary 16]{MR561983} Connes obtained a formula for this operator valued weight as $\psi^{-1} (|\xi\rangle \langle \xi|) = R_{\psi}(\xi) R_{\psi}(\xi)^{\ast}$, where $R_{\psi}(\xi): \HH_{\psi} \to \HH$ is the map from the GNS space of $\psi$ given by $\Lambda(a) \mapsto a\xi$, assuming that it is bounded, that is to say that $\xi$ is a $\psi$-bounded vector (such vectors are always dense in $\HH$). 

We will now check that if $\op{B}$ is a finite dimensional $C^{\ast}$-algebra equipped with a positive functional $\psi$ such that $mm^{\ast}=\op{Id}$ then $\psi^{-1}$ is a faithful conditional expectation. First, note that it is sufficient to check for a single matrix algebra. Indeed, if $z \in \op{B}$ is a central projection (namely $z \in \op{B} \cap \op{B}'$) then (using bimodularity of $\psi^{-1}$ over $\op{B}'$) $\psi^{-1} (zx) = z \psi^{-1}(x) = z \psi^{-1}(x) z = \psi^{-1}(zxz)$. It follows that if $z_{\alpha}$'s are minimal central projections in $\op{B}$ and $\psi_{\alpha}$ is the restriction of the functional $\psi$ to $z_{\alpha}\op{B} \subset \op{B}(z_{\alpha} \HH)$ then $\psi^{-1}$ can be built as a direct sum of the operator valued weights $\psi_{\alpha}^{-1}$. 

We assume now that $\op{B} = M_n \subset \op{B}(\HH)$ ($\HH$ can be infinite dimensional) is equipped with a positive functional $\psi(x):=\op{Tr}(\rho^{-1}) \op{Tr}(\rho x)$, where $\rho \in M_n$ is positive definite. It is well-known that $\HH \simeq \C^{n} \otimes \K$, where $M_n$ acts as $A(v\otimes \xi):= Av \otimes \xi$. Any tensor $v\otimes \xi$ is $\psi$-bounded and one can check that $R_{\psi}(v\otimes \xi)^{\ast}: \C^{n} \otimes \K \to H_{\psi}$ is given by $R_{\psi}(v\otimes \xi)^{\ast}(w\otimes \eta) = \frac{1}{\op{Tr}(\rho^{-1})} |w\rangle\langle \rho^{-1} v|  \langle \xi, \eta\rangle$. It follows that $R_{\psi}(v\otimes \xi) R_{\psi}(v\otimes \xi)^{\ast}(w\otimes \eta) = \frac{1}{\op{Tr}(\rho^{-1})} \langle \rho^{-1} v, v\rangle \langle \xi, \eta\rangle w\otimes \xi$. In other words, if we view $\op{B}(\C^n \otimes \K)$ as $M_n \overline{\otimes} \op{B}(\K)$ and $|v \otimes \xi\rangle \langle v\otimes \xi|$ as $|v\rangle\langle v| \otimes |\xi\rangle \langle \xi|$ then $\psi^{-1} (A\otimes T) = \frac{1}{\op{Tr}(\rho^{-1})} \op{Tr}(\rho^{-1} A) \otimes T$, which amounts to slicing the first leg with the \emph{state} $\frac{1}{\op{Tr}(\rho^{-1})} \op{Tr}(\rho^{-1} \cdot)$ (see also \cite[Remark 3.3 (iii)]{MR1662525}). This is clearly a conditional expectation. Let us summarize this discussion.
\begin{prop}\label{Prop:Inversecondexp}
Let $\op{B} \subset \op{B}(\HH)$ be a finite dimensional $C^{\ast}$-algebra equipped with a positive functional $\psi$ such that $mm^{\ast}= \op{Id}$. Then $\psi^{-1}: \op{B}(\HH) \to \op{B}'$ is a normal faithful conditional expectation.
\end{prop}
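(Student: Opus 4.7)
The proposition is essentially a summary of the computation performed in the preceding paragraphs, so my plan is to organize that discussion into a proof with two steps.

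First, I would reduce to the case of a single matrix block. Let $z_1, \ldots, z_k$ be the minimal central projections of $\op{B}$, which lie in $\op{B} \cap \op{B}'$. Bimodularity of $\psi^{-1}$ over $\op{B}'$ gives $\psi^{-1}(z_\alpha x) = z_\alpha \psi^{-1}(x) z_\alpha = \psi^{-1}(z_\alpha x z_\alpha)$, so $\psi^{-1}$ splits as a direct sum of operator valued weights on the corner algebras $z_\alpha \op{B}(\HH) z_\alpha \simeq \op{B}(z_\alpha \HH)$. Each restriction $\psi_\alpha$ of $\psi$ still satisfies $mm^{\ast} = \op{Id}$ (by Proposition~\ref{Prop:mstar} applied to one block), so it suffices to handle $\op{B} = M_n$.

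Second, for $\op{B} = M_n \subset \op{B}(\HH)$, I would use the standard decomposition $\HH \simeq \C^n \otimes \K$ (on which $M_n$ acts via the first leg) together with Connes' formula $\psi^{-1}(|\eta\rangle\langle \eta|) = R_\psi(\eta) R_\psi(\eta)^{\ast}$ from \cite{MR561983}. Evaluating on a simple tensor $v \otimes \xi$ (which is automatically $\psi$-bounded), a direct computation using $\psi(x) = \op{Tr}(\rho^{-1}) \op{Tr}(\rho x)$ yields
\[
R_\psi(v \otimes \xi)^{\ast}(w \otimes \eta) = \frac{1}{\op{Tr}(\rho^{-1})}\, |w\rangle\langle \rho^{-1} v|\, \langle \xi, \eta\rangle,
\]
and therefore, after identifying $\op{B}(\HH) = M_n \overline{\otimes} \op{B}(\K)$ and $\op{B}' = \mathds{1}_n \otimes \op{B}(\K)$,
\[
\psi^{-1}(A \otimes T) = \frac{1}{\op{Tr}(\rho^{-1})}\, \op{Tr}(\rho^{-1} A)\, T.
\]

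Finally, I would observe that this is manifestly a conditional expectation onto $\op{B}'$: the functional $A \mapsto \frac{1}{\op{Tr}(\rho^{-1})}\op{Tr}(\rho^{-1} A)$ is a faithful normal \emph{state} on $M_n$ (since $\rho^{-1}$ is positive definite and the normalization is correct), so slicing the first leg by it yields a normal, unital, completely positive, faithful, $\op{B}'$-bimodular projection of $\op{B}(\HH)$ onto $\op{B}'$, which is precisely a faithful normal conditional expectation. The main obstacle in this plan is the explicit identification of $R_\psi(v \otimes \xi)^{\ast}$: one has to be careful with the normalization constant $\op{Tr}(\rho^{-1})$ entering $\psi$ and with the identification of the GNS Hilbert space $\HH_\psi$ with a suitable matrix space equipped with the KMS (or GNS) inner product. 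Once this computation is pinned down, the remaining verifications are routine.
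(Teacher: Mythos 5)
Your proposal is correct and follows essentially the same route as the paper: the paper's ``proof'' is exactly the discussion preceding the proposition, which reduces to a single matrix block via the minimal central projections and bimodularity of $\psi^{-1}$ over $\op{B}'$, then uses the decomposition $\HH\simeq \C^n\otimes\K$ and Connes' formula to compute $\psi^{-1}(A\otimes T)=\frac{1}{\op{Tr}(\rho^{-1})}\op{Tr}(\rho^{-1}A)\,T$, i.e.\ slicing the first leg with a faithful normal state. Nothing essential is missing.
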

\begin{rem}
Note that the statement also holds for an infinite direct sum of matrix algebras, which will be useful in the sequel (the proof remains the same). We cannot, however, hope for a much greater generality, as existence of a normal conditional expectation from $\op{B}(\HH)$ onto $\op{B}'$ implies that $\op{B}'$ is atomic \cite[Exercise IX.4.1(e)]{MR1943006}.
\end{rem}
Using this conditional expectation we can equip $\op{B}(\HH)$ with a structure of a pre-Hilbert $C^{\ast}$-module over $\op{B}'$, taking the inner product as $\langle S,T\rangle:= \psi^{-1}(S^{\ast}T)$. Following \cite{MR2908249} we will define quantum graphs on $\op{B}$ as quantum relations (dropping for now the symmetry and reflexivity conditions).

\begin{defn}
Let $\op{B} \subset \op{B}(\HH)$ be a finite dimensional $C^{\ast}$-algebra equipped with a positive functional $\psi$ such that $mm^{\ast} = \op{Id}$. A quantum graph on $(\op{B},\psi)$ is a weak$^{\ast}$ closed $\op{B}'$-bimodule $V \subset \op{B}(\HH)$. Using $\psi^{-1}$ to define a $\op{B}'$-valued inner product, we can make $V$ into a pre-Hilbert $\op{B}'$-module. 
\end{defn}

\begin{prop}\label{Prop:selfdual}
Let $V \subset \op{B}(\HH)$ be a quantum graph on $(\op{B}, \psi)$. Then $V$ is a self-dual Hilbert module, i.e. it is complete for the Hilbert module norm, and any right $\op{B}'$-linear map $T: V \to \op{B}'$ is of the form $Tv= \langle w, v\rangle$ for some $w\in V$.
\end{prop}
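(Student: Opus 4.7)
The plan is to exploit the tensor-product decomposition of $\op{B}(\HH)$ coming from the finite-dimensional subalgebra $\op{B}$ in order to identify $V$ as a finitely generated Hilbert $\op{B}'$-module carrying a canonical finite orthonormal frame, from which both completeness and self-duality will follow by a standard computation.

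First I would reduce to a single matrix block. The central projections $z_{\alpha}\in\op{B}\cap\op{B}'\subset\op{B}'$ decompose $V=\bigoplus_{\alpha,\beta}z_{\alpha}Vz_{\beta}$, and each summand is a bimodule over the corresponding corner of $\op{B}'$, so after this reduction one may assume $\op{B}=M_{n}$ acts on $\HH\simeq\mathbb{C}^{n}\otimes\K$ as $M_{n}\otimes\mathds{1}$. Then $\op{B}'=\mathds{1}\otimes\op{B}(\K)$ and $\op{B}(\HH)=M_{n}\overline{\otimes}\op{B}(\K)$.

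The main structural claim is that $V=W\overline{\otimes}\op{B}(\K)$ for a unique subspace $W\subset M_{n}$, namely $W:=\{(\op{id}\otimes\omega)(T):T\in V,\,\omega\in\op{B}(\K)_{\ast}\}$. The inclusion $V\subset W\overline{\otimes}\op{B}(\K)$ is immediate from the Fubini theorem for von Neumann tensor products. For the reverse inclusion one uses the $\op{B}'$-bimodule structure: for any $T\in V$ and vectors $\xi,\xi',\eta,\eta'\in\K$, the product $(\mathds{1}\otimes|\xi\rangle\langle\xi'|)T(\mathds{1}\otimes|\eta'\rangle\langle\eta|)$ lies in $V$ and explicitly equals $(\op{id}\otimes\omega_{\xi',\eta'})(T)\otimes|\xi\rangle\langle\eta|$, where $\omega_{\xi',\eta'}(S):=\langle\xi',S\eta'\rangle$. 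Since the $\omega_{\xi',\eta'}$ span $\op{B}(\K)_{\ast}$ and the finite-rank operators are weak$^{\ast}$-dense in $\op{B}(\K)$, taking linear combinations and weak$^{\ast}$-limits yields $w\otimes B\in V$ for every $w\in W$ and $B\in\op{B}(\K)$. I expect this structural step, where one must carefully manipulate slice maps and weak$^{\ast}$-closure, to be the main technical obstacle.

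Granted the claim, $W\subset M_{n}$ is finite-dimensional, so I pick a basis $(e_{i})_{i=1}^{k}$ of $W$ orthonormal for the inner product $\langle a,b\rangle_{W}:=\op{Tr}(\rho^{-1}a^{\ast}b)/\op{Tr}(\rho^{-1})$, where $\psi(x)=\op{Tr}(\rho^{-1})\op{Tr}(\rho x)$. Using the explicit formula $\psi^{-1}(A\otimes C)=\op{Tr}(\rho^{-1}A)/\op{Tr}(\rho^{-1})\cdot C$ derived before Proposition \ref{Prop:Inversecondexp}, a direct calculation gives $\langle e_{i}\otimes\mathds{1},e_{j}\otimes\mathds{1}\rangle=\delta_{ij}\mathds{1}\in\op{B}'$ and $\langle e_{j}\otimes\mathds{1},T\rangle=T_{j}$ whenever $T=\sum_{i}e_{i}\otimes T_{i}\in V$, so that $\{e_{i}\otimes\mathds{1}\}_{i=1}^{k}$ is a finite orthonormal frame and $T=\sum_{i}(e_{i}\otimes\mathds{1})\langle e_{i}\otimes\mathds{1},T\rangle$ for every $T\in V$. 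Completeness then follows because the frame provides an isometric right $\op{B}'$-linear isomorphism between $V$ and $(\op{B}')^{k}$ in the Hilbert module norm. For self-duality, given a bounded right $\op{B}'$-linear $\phi\colon V\to\op{B}'$, the element $w:=\sum_{i}(e_{i}\otimes\mathds{1})\phi(e_{i}\otimes\mathds{1})^{\ast}\in V$ satisfies $\langle w,T\rangle=\sum_{i}\phi(e_{i}\otimes\mathds{1})\langle e_{i}\otimes\mathds{1},T\rangle=\phi\bigl(\sum_{i}(e_{i}\otimes\mathds{1})\langle e_{i}\otimes\mathds{1},T\rangle\bigr)=\phi(T)$, which completes the argument.
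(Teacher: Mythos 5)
Your argument is correct in outline but takes a genuinely different route from the paper. The paper proves completeness by observing that the inclusion $\op{B}'\subset\op{B}(\HH)$ has finite index, so that $x\leqslant K\psi^{-1}(x)$ for positive $x$ and the Hilbert-module norm is equivalent to the operator norm on all of $\op{B}(\HH)$; self-duality is then deduced from the fact that $V$ is weak$^{\ast}$ closed with separately weak$^{\ast}$ continuous inner product, via \cite[Lemma 8.5.4]{MR2111973}. You instead prove the structure theorem $V=W\overline{\otimes}\op{B}(\K)$ -- essentially Weaver's correspondence between quantum relations on $M_n$ and subspaces of $M_n$ \cite{MR2908249}, which the paper only invokes later, in the infinite-dimensional subsection -- and then exhibit an explicit finite orthonormal frame. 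Your slice computation $(\mathds{1}\otimes|\xi\rangle\langle\xi'|)T(\mathds{1}\otimes|\eta'\rangle\langle\eta|)=(\op{id}\otimes\omega_{\xi',\eta'})(T)\otimes|\xi\rangle\langle\eta|$ is correct, and since the first leg is finite dimensional the inclusion $V\subset W\otimes\op{B}(\K)$ is elementary. Your route is more self-contained and yields strictly more information (finite generation, an explicit frame, and the reconstruction formula that reappears in Subsection \ref{Subsec:Explicit}); the paper's route is shorter given the references and is the one that survives in the infinite-dimensional setting, where no global finite frame exists.

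The one step that fails as written is the reduction to a single block. The off-diagonal corners $z_{\alpha}Vz_{\beta}$ with $\alpha\neq\beta$ sit inside $\op{B}(\C^{n_{\beta}}\otimes\K_{\beta},\C^{n_{\alpha}}\otimes\K_{\alpha})$ and carry a left action of $\mathds{1}\otimes\op{B}(\K_{\alpha})$ and a right action of $\mathds{1}\otimes\op{B}(\K_{\beta})$; they are not of the form ``$M_n$ acting on $\C^{n}\otimes\K$''. Your structural claim still holds there, in the form $z_{\alpha}Vz_{\beta}=W_{\alpha\beta}\otimes\op{B}(\K_{\beta},\K_{\alpha})$ with $W_{\alpha\beta}\subset\op{B}(\C^{n_{\beta}},\C^{n_{\alpha}})$ finite dimensional, but the frame vectors $e_{i}\otimes\mathds{1}$ no longer make sense and the concluding isomorphism $V\simeq(\op{B}')^{k}$ -- which is what you use to get completeness -- is false in general, for instance whenever $\dim\K_{\alpha}\neq\dim\K_{\beta}$. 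What you actually obtain is a finite orthogonal direct sum of modules of the form $\op{B}(\K_{\beta},\K_{\alpha})^{k_{\alpha\beta}}$. This still suffices: $\op{B}(\K_{\beta},\K_{\alpha})=p\op{B}(\K_{\alpha}\oplus\K_{\beta})q$ is a weak$^{\ast}$ closed corner of a von Neumann algebra, hence a complete and self-dual right $\op{B}(\K_{\beta})$-module (its module norm is the operator norm, and self-duality of such corners is standard, cf. \cite{MR355613}), and a finite direct sum of complete self-dual modules is complete and self-dual. So the gap is repairable, but the reduction and the final identification with $(\op{B}')^{k}$ need to be restated in terms of these rectangular corners.
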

\begin{proof}
We will start by checking that $\op{B}(\HH)$ is a Hilbert module. Indeed, the inclusion $\op{B}' \subset \op{B}(\HH)$ is of finite index in the sense of \cite[Definition 3.4]{MR1662525}, because the index is equal to $(\psi^{-1})^{-1}(\mathds{1}) = \psi(\mathds{1}) < \infty$, but also in the sense of \cite[Proposition 3.3]{MR945550}, as from the explicit formula for $\psi^{-1}$ and the fact that we only have finitely many summands it immediately follows that for some $K>0$ we have $x\leqslant K \psi^{-1}(x)$ for all positive $x\in \op{B}(\HH)$. As a consequence, the operator norm is equivalent to the pre-Hilbert norm induced by $\psi^{-1}$ (because the other inequality $\|\psi^{-1}(x)\|\leqslant \|x\|$ is clear), hence the latter is complete. Clearly each closed submodule, such as $V$, is then also a Hilbert $\op{B}'$-module.

To prove self-duality, note that $V$ is a dual Banach space, being a weak$^{\ast}$ closed subspace of $\op{B}(\HH)$; moreover, the $\op{B}'$-valued inner product is separately weak$^{\ast}$ continuous. It follows from \cite[Lemma 8.5.4]{MR2111973} that $V$ is self-dual. It  implies that it admits an orthonormal basis, all bounded right $\op{B}'$-modular maps on $V$ are automatically adjointable and the algebra of adjointable operators is a von Neumann algebra \cite[Theorem 3.12, Proposition 3.4, and Proposition 3.10]{MR355613}).
\end{proof}
\begin{rem}
In all other examples appearing in this paper the existence of a predual making the inner product separately weak$^{\ast}$ continuous will always be easy, so checking self-duality will amount to proving completeness of the pre-Hilbert norm.
\end{rem}

We have all the necessary ingredients to associate a projection in $\op{B}\otimes \op{B}^{\op{op}}$ to a $\op{B}'$-bimodule $V$; indeed, we can take an orthonormal basis of $V$ and use it to construct a $\op{B}'$-bimodular orthogonal projection onto $V$, which in turn we can represent using $\op{B} \otimes \op{B}^{\op{op}}$. There is, however, a slight problem, since our $\op{B}'$-valued inner product is rather an analogue of the GNS inner product, not the KMS inner product, so it would not interact nicely with an additional symmetry condition; we need therefore a modular group of the operator valued weight $\psi^{-1}$. Haagerup proved that for any normal, faithful, semifinite weight $\psi'$ on $\op{B}'$ the restriction of the modular group of $\phi:=\psi' \circ \psi^{-1}$ to $\op{B}$ does not depend on $\psi'$ and moreover $\sigma^{\phi}_t = \sigma^{\psi}_{-t}$ (see \cite[Proposition 6.1 and Theorem 6.13]{MR549119}). Once we get to the explicit correspondence between various definitions of quantum graphs, we will write everything in terms of the modular group of $\psi$. However, for now we will denote the modular group of the conditional expectation $\psi^{-1}$ by $\sigma_{t}^{\psi^{-1}}$. 

To an extent, one can define this modular group just by using the Hilbert module structure of $\op{B}(\HH)$; the right multiplication operator by an element $b \in \op{B}$ is adjointable, and as it is a left $\op{B}(\HH)$-modular map, it is also a right multiplication by some element $\widetilde{b}$, which has to be equal to $\sigma_{-i}(b^{\ast})$ by the KMS property. 

This uniqueness also implies that we have a well defined action on bimodules. Indeed, if we have two weights $\phi_1:= \psi'_{1}\circ \psi^{-1}$ and $\phi_2:= \psi'_{2}\circ \psi^{-1}$ on $\op{B}(\HH)$ then their respective modular groups are given using density matrices: $\sigma^{\phi_1}_{t}(x) = \rho_1^{it} x \rho_1^{-it}$ and $\sigma^{\phi_2}_{t}(x) = \rho_2^{it} x \rho_2^{-it}$. Since they agree on $B$, we have $\rho_1^{it} \rho_2^{-it} \in \op{B}'$, hence $\rho_{1}^{it} = u_{t} \rho_{2}^{it}$ for some unitary in $\op{B}'$. As $V$ is a $\op{B}'$-bimodule, we have $u_{t} V u_{t}^{\ast} = V$, therefore $\sigma_t^{\phi_1}(V) = \sigma_t^{\phi_2}(V)$. 

Suppose now that we have a finite orthonormal set $X:=(x_1,\dots, x_d)$ in a von Neumann algebra $\mathsf{M}$ (equipped with a state $\varphi$) with respect to the KMS inner product, which consists of analytic elements. We would like to  find a relation between orthogonal projections with respect to KMS and GNS inner products. The orthogonal projection onto $V$, the span of $X$, is given by $P^{\op{KMS}}_{V}(x):= \sum_{i=1}^{d} x_i \langle x_i, x \rangle_{\op{KMS}} = \sum_{i=1}^{d} x_{i} \varphi \left(\left(\sigma_{-\frac{i}{4}}(x_{i})\right)^{\ast} \sigma_{-\frac{i}{4}}(x)\right)$. This is equal to $\sum_{i=1}^{d} x_{i} \langle \sigma_{-\frac{i}{4}}(x_i), \sigma_{-\frac{i}{4}} (x) \rangle$. Thus $\sigma_{-\frac{i}{4}}(X)$ is an orthonormal set with respect to the GNS inner product and we checked that $P^{\op{KMS}}_V = \sigma_{\frac{i}{4}}\circ P^{GNS}_{\sigma_{-\frac{i}{4}}(V)} \circ \sigma_{-\frac{i}{4}}$.
Therefore, if in our setting of bimodules we can define the bimodule $\sigma_{-\frac{i}{4}}^{\psi^{-1}}(V)$ and represent the orthogonal projection onto it as an element of $\op{B} \otimes \op{B}^{\op{op}}$ then we can also apply the conjugation by $\sigma_{\frac{i}{4}}^{\psi^{-1}}$ on the level of $\op{B}\otimes \op{B}^{\op{op}}$, where the modular group is well defined, and in this way we will obtain a representation of the orthogonal projection onto $V$ with respect to the KMS inner product.

To check that one can define $\sigma_{-\frac{i}{4}}^{\psi^{-1}}(V)$, i.e. we can analytically extend the action of $\sigma_{t}^{\psi^{-1}}$, recall the discussion preceding Proposition \ref{Prop:Inversecondexp}. We have $\op{B} = \bigoplus_{\alpha} M_{n_{\alpha}}$ and any representation is essentially a direct sum of inflations of the standard representations of matrix algebras, i.e. $\HH\simeq \bigoplus_{\alpha} \C^{n_{\alpha}} \otimes \K_{\alpha}$. The commutant $\op{B}'$ is then equal to $\bigoplus_{\alpha} \mathds{1}_{\alpha} \otimes \op{B}(\K_{\alpha})$. Since the action of the modular group $\sigma_t^{\psi^{-1}}(V)$ does not depend on the choice of the weight on $\op{B}'$, we might as well assume that the weight on $\op{B}(\HH)$ is given by the density $\bigoplus_{\alpha} \rho_{\alpha}^{-1} \otimes \op{Id}_{\K_{\alpha}}$, i.e. we take the trace on each $\op{B}(\K_{\alpha})$. Then something nontrivial happens only on  the finite dimensional subspace $\op{B}(\C^{n_{\alpha}}, \C^{n_{\beta}})$, thus we may define $\sigma_{z}(V)$ for arbitrary $z \in \mathbb{C}$.
\begin{lem}\label{Lem:Analyticbimod}
For any weak$^{\ast}$ closed $\op{B}'$-bimodule $V \subset \op{B}(\HH)$ and $z \in \mathbb{C}$ we can define another weak$^{\ast}$ closed $\op{B}'$-bimodule $\sigma_{z}^{\psi^{-1}}(V)$.
\end{lem}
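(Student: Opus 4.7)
The plan is to use the decomposition described in the paragraph immediately preceding the lemma to reduce the question to a purely finite-dimensional one. By that discussion, since $\sigma^{\psi^{-1}}_t(V)$ is independent of the choice of weight on $\op{B}'$, I may fix the standard decomposition $\HH \simeq \bigoplus_{\alpha} \C^{n_\alpha} \otimes \K_\alpha$ with $\op{B}$ acting on the first leg, and take the weight on $\op{B}(\HH)$ whose density is $\bigoplus_\alpha \rho_\alpha^{-1} \otimes \op{Id}_{\K_\alpha}$. Then $\sigma_t^{\psi^{-1}}$ is implemented by the unitaries $u_t := \bigoplus_\alpha \rho_\alpha^{it}\otimes \mathds{1}_{\K_\alpha}$, which are analytic functions of $t$ extending to entire $\op{B}(\HH)$-valued functions of $z$ (each summand is finite-dimensional in the first leg).

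Next, I would decompose $\op{B}(\HH) = \bigoplus_{\alpha,\beta} \op{B}(\C^{n_\alpha}, \C^{n_\beta}) \, \overline{\otimes}\, \op{B}(\K_\alpha,\K_\beta)$ and look at the $(\alpha,\beta)$-block $V_{\alpha,\beta}$ of $V$. This block is a weak$^*$ closed subspace invariant under left multiplication by $\op{B}(\K_\beta)$ and right multiplication by $\op{B}(\K_\alpha)$. By the standard bimodule-over-type-I-factor decomposition (a Wedderburn-style tensor splitting, exactly of the flavour used in \cite{MR2908249}), any such bimodule is of the form
\[
V_{\alpha,\beta} = W_{\alpha,\beta} \,\overline{\otimes}\, \op{B}(\K_\alpha,\K_\beta)
\]
for a uniquely determined subspace $W_{\alpha,\beta} \subset \op{B}(\C^{n_\alpha},\C^{n_\beta})$. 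Crucially, $W_{\alpha,\beta}$ lives in a finite-dimensional space.

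With this in hand, the modular action on the $(\alpha,\beta)$-block becomes simply $x \mapsto (\rho_\beta^{it}\otimes\mathds{1}) x (\rho_\alpha^{-it}\otimes\mathds{1})$, which on the tensor splitting acts only on the first factor as $W_{\alpha,\beta} \mapsto \rho_\beta^{it} W_{\alpha,\beta} \rho_\alpha^{-it}$. Since $W_{\alpha,\beta}$ is finite-dimensional, this expression makes sense verbatim for any complex $z$ in place of $t$, so I define
\[
\sigma^{\psi^{-1}}_z(V) := \bigoplus_{\alpha,\beta} \bigl(\rho_\beta^{iz} W_{\alpha,\beta} \rho_\alpha^{-iz}\bigr) \,\overline{\otimes}\, \op{B}(\K_\alpha,\K_\beta).
\]
By construction each summand is again a weak$^*$ closed $\op{B}(\K_\beta)$-$\op{B}(\K_\alpha)$-bimodule (tensoring a finite-dimensional subspace with the full off-diagonal algebra), so the direct sum is a weak$^*$ closed $\op{B}'$-bimodule. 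Well-definedness (independence of the auxiliary weight chosen on $\op{B}'$) follows from the argument already given in the paragraph before the lemma: a change of weight on $\op{B}'$ changes $\rho_\alpha^{iz}$ by a unitary from $\op{B}'$, which preserves the bimodule.

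The main obstacle is the bimodule splitting $V_{\alpha,\beta} = W_{\alpha,\beta}\,\overline{\otimes}\,\op{B}(\K_\alpha,\K_\beta)$; everything else is bookkeeping. This splitting is, however, essentially the finite-dimensional case of Weaver's classification of quantum relations (or, more elementarily, the fact that a weak$^*$ closed subspace of $M_{n}\,\overline{\otimes}\, \op{B}(\K)$ invariant under the $\op{B}(\K)$-actions on both sides is generated by its ``scalar'' slice), so this step can be quoted rather than reproved.
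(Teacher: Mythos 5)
Your argument is correct and takes essentially the same route as the paper: the paper's (very brief) justification likewise uses independence of the auxiliary weight to fix the tracial weight on the $\K_{\alpha}$-legs of $\HH\simeq\bigoplus_{\alpha}\C^{n_{\alpha}}\otimes\K_{\alpha}$ and then observes that the modular action only touches the finite-dimensional factors $\op{B}(\C^{n_{\alpha}},\C^{n_{\beta}})$, where analytic continuation is automatic. The only difference is that you spell out the tensor splitting $V_{\alpha\beta}=W_{\alpha\beta}\overline{\otimes}\op{B}(\K_{\alpha},\K_{\beta})$ that the paper leaves implicit, which is a welcome clarification rather than a new idea.
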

By \cite[Corollary 8.5.8]{MR2111973} any bounded module map between self-dual Hilbert modules over von Neumann algebras is automatically weak$^{\ast}$ continuous, hence the orthogonal projection $P_{\sigma_{-\frac{i}{4}}^{\psi^{-1}}(V)}^{GNS}: \op{B}(\HH)\to \sigma_{-\frac{i}{4}}^{\psi^{-1}}(V)\subset \op{B}(\HH)$ is a normal $\op{B}'$-bimodular map, therefore of the form $P_{\sigma_{-\frac{i}{4}}^{\psi^{-1}}(V)}^{GNS}x = \sum_{k} b_{k} x c_{k}$ for some $b_{k}, c_{k} \in \op{B}$ (see \cite[Theorem 3.1]{MR1138841} for a slightly more general result; there the von Neumann algebraic result we need is attributed to Haagerup). In this case the orthogonal projection onto $V$ with respect to the KMS inner product would be given by $P_{V}^{\op{KMS}}(x) = \sum_{k} \sigma_{\frac{i}{4}}^{\psi^{-1}}(b_k) x \sigma_{\frac{i}{4}}^{\psi^{-1}}(c_k)$. Note that the map $x \mapsto \sum_{k} b_k x c_k$ is represented by $\sum_{k} b_{k} \otimes \left(\sigma_{\frac{i}{2}}^{\psi^{-1}}(c_k)\right)^{\op{op}}$, because the right action needs to be twisted by the modular group so that it becomes a $\ast$-representation. To wit, an element $b\otimes c^{\op{op}}\in \op{B}\otimes \op{B}^{\op{op}}$ acts as $x\mapsto bx \sigma_{-\frac{i}{2}}^{\psi^{-1}}(c)$, when we use the GNS inner product. However, if we use the KMS inner product, then $b\otimes c^{\op{op}}$ needs to acts as $x \mapsto \sigma_{\frac{i}{4}}^{\psi^{-1}}(b)x \sigma_{-\frac{i}{4}}^{\psi^{-1}}(c)$. Therefore $\sum_{k} b_{k} \otimes \left(\sigma_{\frac{i}{2}}^{\psi^{-1}}(c_k)\right)^{\op{op}}$ acting on $\op{B}(\HH)$ with respect the KMS inner product acts as $x \mapsto \sum_{k} \sigma_{\frac{i}{4}}^{\psi^{-1}}(b_k) x \sigma_{\frac{i}{4}}^{\psi^{-1}}(c_k)$, i.e. it is equal to $P_{V}^{\op{KMS}}$. It means that $P_{V}^{\op{KMS}}$ and $P_{\sigma_{-\frac{i}{4}}^{\psi^{-1}}(V)}^{GNS}$ are represented by the same element of $\op{B}\otimes \op{B}^{\op{op}}$.
\begin{prop}\label{Prop:Bimodproj}
The assignment $\Phi: \op{B}\otimes \op{B}^{\op{op}} \to \phantom{B}_{\op{B}'}\op{B}_{\op{B}'}(\HH)$, where $\phantom{B}_{\op{B}'}\op{B}_{\op{B}'}(\HH)$ denotes the algebra of bounded $\op{B}'$-bimodular maps, given by the extension of $b\otimes c^{\op{op}} \mapsto (x\mapsto b x \sigma_{-\frac{i}{2}}(c))$, is an isomorphism of von Neumann algebras.
\end{prop}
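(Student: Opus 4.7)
The plan is to establish the four standard properties of $\Phi$: well-definedness, multiplicativity, $\ast$-preservation, and bijectivity, then observe that the von Neumann algebra structure follows automatically. Well-definedness is immediate: since $b$ and $\sigma_{-i/2}(c)$ both lie in $\op{B}$, the maps of left multiplication by $b$ and right multiplication by $\sigma_{-i/2}(c)$ commute with the left and right actions of $\op{B}'$, so their composition is a bounded $\op{B}'$-bimodular map on $\op{B}(\HH)$. Multiplicativity is a one-line computation exploiting that $\sigma_{-i/2}$ is an algebra automorphism: for the product $(a\otimes b^{\op{op}})(c\otimes d^{\op{op}}) = ac\otimes (db)^{\op{op}}$, composing the corresponding maps yields $x\mapsto ac\,x\,\sigma_{-i/2}(d)\sigma_{-i/2}(b) = ac\,x\,\sigma_{-i/2}(db)$, matching $\Phi$ of the product.

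Next, the $\ast$-property has to be verified with respect to the $\op{B}'$-valued inner product $\langle S,T\rangle = \psi^{-1}(S^{\ast}T)$. I would compute the adjoints of the left and right multiplication maps separately: left multiplication by $b$ clearly has adjoint left multiplication by $b^{\ast}$ (as $\psi^{-1}$ is $\op{B}'$-bimodular and $b$ commutes with everything in $\op{B}'$ from the inside), while for the right multiplication by $\sigma_{-i/2}(c)$ one uses the KMS identity for the conditional expectation $\psi^{-1}$, combined with the relation $\sigma_t^{\psi^{-1}} = \sigma_{-t}^{\psi}$ recalled above from Haagerup. The choice of the twist $\sigma_{-i/2}$ is precisely designed so that the $1/2$-shifts cancel and the adjoint comes out to $x\mapsto b^{\ast}x\,\sigma_{-i/2}(c^{\ast})$, which is $\Phi$ of $(b\otimes c^{\op{op}})^{\ast} = b^{\ast}\otimes (c^{\ast})^{\op{op}}$. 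This KMS bookkeeping is the main computational obstacle, but it is essentially the same calculation that motivates the formula in the paragraph preceding the proposition.

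For injectivity, since $\op{B}\otimes \op{B}^{\op{op}}$ is a finite-dimensional $C^{\ast}$-algebra, it suffices to verify that $\Phi$ is nonzero on every minimal central projection, or more directly, that if $\sum_k b_k x \sigma_{-i/2}(c_k) = 0$ for all $x\in \op{B}(\HH)$ then $\sum_k b_k \otimes c_k^{\op{op}} = 0$; this follows by testing against rank-one operators and using that $\op{B}\subset \op{B}(\HH)$ is a faithful representation. Surjectivity is the other substantive step: given any bounded $\op{B}'$-bimodular map $T$ on $\op{B}(\HH)$, Proposition \ref{Prop:selfdual} (applied to $V = \op{B}(\HH)$) makes $\op{B}(\HH)$ self-dual as a Hilbert $\op{B}'$-module, so $T$ is automatically normal by \cite[Corollary 8.5.8]{MR2111973}, and the representation result cited in the paragraph preceding the proposition (the normal $\op{B}'$-bimodular maps on $\op{B}(\HH)$ are of the form $x \mapsto \sum_k \beta_k x \gamma_k$ with $\beta_k, \gamma_k \in \op{B}$) expresses $T$ in the required form after absorbing the invertible map $\sigma_{-i/2}$ into the second factor. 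Finally, $\op{B}\otimes \op{B}^{\op{op}}$ is a finite-dimensional $C^{\ast}$-algebra, hence a von Neumann algebra, and any $\ast$-isomorphism of $C^{\ast}$-algebras with finite-dimensional domain is automatically a normal isomorphism of von Neumann algebras, which concludes the proof.
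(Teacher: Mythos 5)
Your proof is correct and follows essentially the same route as the paper's: the paper simply asserts that $\Phi$ is a $\ast$-homomorphism (you supply the multiplicativity and adjoint computations, which check out, since the adjoint of right multiplication by $a$ is right multiplication by $\sigma_{-i}(a^{\ast})$ and the half-shifts cancel as you say), obtains surjectivity from the representation of normal $\op{B}'$-bimodular maps combined with automatic normality from self-duality, and proves injectivity by testing $\sum_k b_k x c_k = 0$ against Hilbert--Schmidt (equivalently, rank-one) operators and invoking faithfulness of the natural representation of $\op{B}\otimes\op{B}^{\op{op}}$ on $\HH\otimes\overline{\HH}$. The only cosmetic difference is that the paper absorbs the modular twist at the outset via the bijection $\op{Id}\otimes\sigma_{\frac{i}{2}}^{\psi^{-1}}$, whereas you absorb it at the end; this changes nothing.
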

\begin{proof}
$\Phi$ is a $\ast$-homomorphism, which is also surjective by the aforementioned representation of $\op{B}'$-bimodular maps. Let us check that it is injective. Suppose that (remember that $\op{Id}\otimes \sigma_{\frac{i}{2}}^{\psi^{-1}}: \op{B}\otimes \op{B}^{\op{op}} \to\op{B}\otimes \op{B}^{\op{op}}$ is a bijection) $\Phi(\sum_{k} b_{k} \otimes \sigma_{\frac{i}{2}}(c_{k})^{\op{op}})=0$, i.e. $\sum_{k} b_{k} x c_k = 0$ for each $x\in \op{B}(\HH)$. This holds in particular for any Hilbert-Schmidt operator, identified with a vector $\xi \in \HH\otimes \overline{\HH}$. The natural representation of $\op{B}\otimes \op{B}^{\op{op}}$ on $\HH\otimes \overline{\HH}$ is clearly faithful, so injectivity follows.
\end{proof}

\begin{thm}\label{Thm:projbimod}
Let $\op{B}\subset \op{B}(\HH)$ be a finite dimensional $C^{\ast}$-algebra equipped with a positive functional $\psi$ such that $mm^{\ast} = \op{Id}$ and let $\psi^{-1}:\op{B}(\HH) \to \op{B}'$ be the corresponding normal, faithful conditional expectation. There is a one-to-one correspondence between weak$^{\ast}$ closed $\op{B}'$-bimodules $V \subset \op{B}(\HH)$ and projections $P \in \op{B}\otimes \op{B}^{\op{op}}$ obtained in the following way. For $V\subset \op{B}(\HH)$ let $P_{\sigma_{-\frac{i}{4}}^{\psi^{-1}}(V)}$ be the  $\op{B}'$-bimodular projection onto $\sigma_{-\frac{i}{4}}^{\psi^{-1}}(V)$ and let $P \in \op{B}\otimes \op{B}^{\op{op}}$ be its representing projection. On the other hand for a projection $P \in \op{B}\otimes \op{B}^{\op{op}}$ let $\widetilde{P}$ be the corresponding $\op{B}'$-bimodular projection; then $V = \sigma_{\frac{i}{4}}^{\psi^{-1}}(\op{Im}(\widetilde{P}))$.
\end{thm}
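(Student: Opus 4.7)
The plan is to package the theorem as a consequence of the three preceding results: Proposition \ref{Prop:selfdual} (which makes the orthogonal projection onto a weak$^{\ast}$-closed bimodule well defined), Lemma \ref{Lem:Analyticbimod} (which lets us analytically twist bimodules by $\sigma^{\psi^{-1}}_{z}$), and Proposition \ref{Prop:Bimodproj} (which identifies projections in $\op{B}\otimes \op{B}^{\op{op}}$ with $\op{B}'$-bimodular projections on $\op{B}(\HH)$). Since $\Phi$ is a $\ast$-isomorphism of von Neumann algebras, projections in $\op{B}\otimes \op{B}^{\op{op}}$ are in bijective correspondence with $\op{B}'$-bimodular projections $\widetilde{P}$ acting on $\op{B}(\HH)$; the theorem therefore reduces to a bijection between weak$^{\ast}$-closed $\op{B}'$-bimodules and $\op{B}'$-bimodular projections on $\op{B}(\HH)$, suitably twisted by the modular group.

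First, I would define the forward map. Given a weak$^{\ast}$-closed $\op{B}'$-bimodule $V$, apply Lemma \ref{Lem:Analyticbimod} to produce the weak$^{\ast}$-closed bimodule $W := \sigma^{\psi^{-1}}_{-i/4}(V)$. By Proposition \ref{Prop:selfdual}, $W$ is a self-dual Hilbert $\op{B}'$-module, so the orthogonal projection $P^{\op{GNS}}_{W}$ onto $W$ exists, is bounded and adjointable, and is automatically right $\op{B}'$-linear. It is also left $\op{B}'$-linear, since $W$ is a left $\op{B}'$-submodule and the operation $b \cdot$ (for $b \in \op{B}'$) preserves the $\op{B}'$-valued GNS inner product up to a relation that commutes with the projection onto $W$ - equivalently, since by the structure theorem $W$ decomposes along the central summands of $\op{B}$ compatibly with the $\op{B}'$-action. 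Thus $P^{\op{GNS}}_{W}$ is a $\op{B}'$-bimodular projection, and $P := \Phi^{-1}(P^{\op{GNS}}_{W}) \in \op{B}\otimes \op{B}^{\op{op}}$ is a projection, as desired.

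Second, I would define the inverse map. Given a projection $P \in \op{B}\otimes \op{B}^{\op{op}}$, let $\widetilde{P} := \Phi(P)$, which is a $\op{B}'$-bimodular projection on $\op{B}(\HH)$ of the form $x \mapsto \sum_{k} b_{k} x \sigma_{-i/2}^{\psi^{-1}}(c_{k})$; because this map is normal (each left/right multiplication is weak$^{\ast}$ continuous) its image is a weak$^{\ast}$-closed $\op{B}'$-sub-bimodule of $\op{B}(\HH)$. Then apply Lemma \ref{Lem:Analyticbimod} once more to define $V := \sigma^{\psi^{-1}}_{i/4}(\op{Im}(\widetilde{P}))$. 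Checking that the two assignments are mutually inverse is then a formal computation: starting from $V$, one obtains $\sigma^{\psi^{-1}}_{i/4}(\op{Im}(\Phi(\Phi^{-1}(P^{\op{GNS}}_{W})))) = \sigma^{\psi^{-1}}_{i/4}(W) = V$, and symmetrically in the other direction, using that the image of an orthogonal projection onto a weak$^{\ast}$-closed submodule $W$ is exactly $W$, and that the GNS-orthogonal projection onto a given self-dual Hilbert $\op{B}'$-submodule is unique.

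The most delicate point in this plan is the two-sided modularity of $P^{\op{GNS}}_{W}$; I expect to handle it either by invoking the uniqueness of the adjointable right-linear projection (which for a left $\op{B}'$-submodule must also be left $\op{B}'$-linear by an averaging argument using the $\op{B}'$-action) or, more concretely, by unpacking the block decomposition $\op{B}(\HH) \simeq \bigoplus_{\alpha,\beta} \op{B}(\C^{n_{\alpha}},\C^{n_{\beta}}) \otimes \op{B}(\K_{\alpha},\K_{\beta})$ discussed before Lemma \ref{Lem:Analyticbimod}, under which every weak$^{\ast}$-closed $\op{B}'$-bimodule is $\bigoplus_{\alpha,\beta} V_{\alpha\beta} \otimes \op{B}(\K_{\alpha},\K_{\beta})$ and the GNS-orthogonal projection tensors an orthogonal projection on the finite-dimensional factor with the identity on $\op{B}(\K_{\alpha},\K_{\beta})$, making bimodularity manifest.
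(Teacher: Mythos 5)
Your proposal is correct and follows essentially the same route as the paper: the author's proof is precisely the assembly of Proposition \ref{Prop:Bimodproj} (projections in $\op{B}\otimes\op{B}^{\op{op}}$ versus $\op{B}'$-bimodular projections), Lemma \ref{Lem:Analyticbimod} (the twist $\sigma_{z}^{\psi^{-1}}(V)$ is well defined), and Proposition \ref{Prop:selfdual} (self-duality gives the bimodule/bimodular-projection correspondence). You simply spell out the forward and inverse maps and the two-sided modularity of the orthogonal projection in more detail than the paper does, and both of your suggested ways of handling that last point (uniqueness of the adjointable projection, or the block decomposition) are sound.
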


\begin{proof}
The correspondence between projections in $\op{B}\otimes \op{B}^{\op{op}}$ and $\op{B}'$-bimodular projections on $\op{B}(\HH)$ was established in Propostion \ref{Prop:Bimodproj}. The fact that for any weak$^{\ast}$ closed bimodule we can define $\sigma_{z}(V)$ for any $z\in\mathbb{C}$ was covered in Lemma \ref{Lem:Analyticbimod}. The last ingredient is the correspondence between bimodules and bimodular projections, which follows from self-duality (see Proposition \ref{Prop:selfdual}).
\end{proof}
\begin{rem}
In Subsection \ref{Subsec:Explicit} we will present a much less abstract version of this correspondence.
\end{rem}

\subsection{Infinite quantum graphs}
In order to work with infinite discrete quantum groups, we need to deal with quantum graphs defined on an infinite direct sum of matrix algebras. The theory of quantum relations developed by Weaver works equally well for general von Neumann algebras and if $\mathsf{M} \subset \op{B}(\HH)$ then a quantum relation over $\mathsf{M}$ is a weak$^{\ast}$ closed $\mathsf{M}'$-bimodule inside $\op{B}(\HH)$. We will assume that our algebra is of the form $\op{B} = \ell^{\infty}-\bigoplus_{\alpha} \op{M}_{n_{\alpha}}$ and it is equipped with a weight $\psi(x):= \bigoplus_{\alpha} \op{Tr}(\rho_{\alpha}^{-1}) \op{Tr}(\rho_{\alpha} x_{\alpha})$; such a weight satisfies $mm^{\ast}=\op{Id}$, e.g. if we view it as a map on $L^{2}(\op{B})$, but it should be noted that $m^{\ast}$ is not a well-defined map from $\op{B}$ to $\op{B}\overline{\otimes} \op{B}$. In this case Weaver's quantum relations correspond to projections in $\op{B} \overline{\otimes} \op{B}^{\op{op}}$, which in turn will be in bijection with suitably defined quantum adjacency matrices. Later in this section we will describe a certain subclass of quantum graphs that is more suitable for describing quantum Cayley graphs.
\begin{defn}
Let $I$ be a countable index set and let $\op{B}:=\ell^{\infty}-\bigoplus_{\alpha \in I} M_{n_{\alpha}}$, where $n_{\alpha} \in \mathbb{N}$. Let $c_{00}(\op{B})$ be the algebraic direct sum $c_{00}-\bigoplus_{\alpha} M_{n_{\alpha}}$, which can be more abstractly defined as the subalgebra generated by $\psi$-finite projections, i.e. projections $p$ such that $\psi(p)<\infty$.
A completely positive map $A: c_{00}(\op{B}) \to \op{B}$ is called a \textbf{quantum adjacency matrix} if for each $\alpha \in I$ it satisfies
\[
\frac{1}{\op{Tr}(\rho_{\alpha}^{-1})} \sum_{l} A(e_{il}^{\alpha}\rho_{\alpha}^{-1})A(e_{lj}^{\alpha}) = A(e_{ij}^{\alpha}).
\]
The corresponding projection in $\op{B}\overline{\otimes} \op{B}^{\op{op}}$ is given by $P:=\sum_{\alpha} \frac{1}{\op{Tr}(\rho_{\alpha}^{-1})} \sum_{i,j} A(\rho_{\alpha}^{-\frac{1}{4}}e_{ij}^{\alpha}\rho_{\alpha}^{-\frac{1}{4}}) \otimes \rho_{\alpha}^{-\frac{1}{4}}e_{ji}^{\alpha}\rho_{\alpha}^{-\frac{1}{4}}$.
\end{defn}
\begin{rem}
Note that in the definition of $P$ only the sum over $\alpha$ is infinite but the summands are projections lying in disjoint blocks, so it clearly converges in the weak/strong topology.
\end{rem}
\begin{prop}
Let $P\in \op{B} \overline{\otimes} \op{B}^{\op{op}}$. Then we can define a quantum adjacency matrix $A: c_{00}(\op{B}) \to \op{B}$ via
\[
A(e_{ij}^{\alpha}):= (\op{Id}\otimes \psi)(P\# (1\otimes \rho_{\alpha}^{-\frac{1}{2}}e_{ij}^{\alpha} \rho_{\alpha}^{\frac{1}{2}})).
\]
\end{prop}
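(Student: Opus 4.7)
The plan is (i) to verify that the formula defines a bounded linear map $A:c_{00}(\op{B})\to \op{B}$, and (ii) to deduce complete positivity and the quantum adjacency identity from the corresponding properties of $P$ (namely, $P$ positive and idempotent) via a reduction to the finite-dimensional Proposition \ref{Prop:Generalizedchoi} applied block-by-block. For well-definedness, observe that $y_{ij}^\alpha := \rho_\alpha^{-\frac{1}{2}} e_{ij}^\alpha \rho_\alpha^{\frac{1}{2}}$ lies in the single summand $M_{n_\alpha}$; since $(a\otimes b^{\op{op}})\#(1\otimes y) = a\otimes yb$ vanishes whenever $b$ and $y$ sit in disjoint central blocks, only the compression $(1\otimes z_\alpha^{\op{op}})P\in \op{B}\overline\otimes M_{n_\alpha}^{\op{op}}$ contributes to $P\#(1\otimes y_{ij}^\alpha)$. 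That compression is bounded and $\psi|_{M_{n_\alpha}}$ is a bounded normal functional, so the slice by $\op{Id}\otimes\psi$ is well defined and yields an element of $\op{B}$.

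Next I reduce to the finite-dimensional setting. Let $z_\beta\in \op{B}$ denote the central projection onto $M_{n_\beta}$ and set $P_{\alpha\beta} := (z_\beta\otimes z_\alpha^{\op{op}})P$. Since $z_\beta\otimes z_\alpha^{\op{op}}$ is a central projection in $\op{B}\overline\otimes \op{B}^{\op{op}}$ it commutes with $P$, so $P_{\alpha\beta}$ is a projection in the finite-dimensional algebra $M_{n_\beta}\otimes M_{n_\alpha}^{\op{op}}$, and the formula immediately gives $z_\beta A(e_{ij}^\alpha) = (\op{Id}\otimes \psi|_{M_{n_\alpha}})\bigl(P_{\alpha\beta}\#(1\otimes y_{ij}^\alpha)\bigr)$. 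Regarding $P_{\alpha\beta}$ as a projection in $\op{C}_{\alpha\beta}\otimes \op{C}_{\alpha\beta}^{\op{op}}$ for $\op{C}_{\alpha\beta} := M_{n_\alpha}\oplus M_{n_\beta}$ (or $M_{n_\alpha}$ alone when $\alpha=\beta$), equipped with the restricted weight (which inherits $mm^*=\op{Id}$), Proposition \ref{Prop:Generalizedchoi} produces a quantum adjacency matrix on $\op{C}_{\alpha\beta}$; tracking the formulas through Lemmas \ref{Lem:choirep} and \ref{Lem:Representingproj} identifies this QAM with the map sending $M_{n_\alpha}$ to $M_{n_\beta}$ by $x\mapsto z_\beta A(x)$ and vanishing on $M_{n_\beta}$. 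Hence $z_\beta A|_{M_{n_\alpha}}$ is CP and satisfies the appropriate block-wise piece of the quantum adjacency identity.

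Summing over $\beta$ gives complete positivity of $A|_{M_{n_\alpha}}$ as a sum of CP maps into disjoint central blocks, and hence of $A$ on $c_{00}(\op{B})$. Centrality of the $z_\beta$ and the vanishing of $m$ on products of elements from disjoint blocks reduce the identity $\frac{1}{\op{Tr}(\rho_\alpha^{-1})}\sum_l A(e_{il}^\alpha\rho_\alpha^{-1})A(e_{lj}^\alpha) = A(e_{ij}^\alpha)$ to its $z_\beta$-compressions, which are precisely the block-wise identities obtained above. The main obstacle is the identification in the previous paragraph: matching the modular twist $\rho_\alpha^{-\frac{1}{2}}\,\cdot\,\rho_\alpha^{\frac{1}{2}}$ appearing in the definition of $A$ with the twist by $\sigma_{-\frac{i}{2}}^{\op{op}}$ that relates $P$ and $\widetilde{P}$ in Lemma \ref{Lem:Representingproj}; once this bookkeeping is performed, the formula of the proposition is precisely the inverse of the finite-dimensional assignment $A\mapsto P$ from Lemma \ref{Lem:choirep}.
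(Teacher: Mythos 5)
Your proposal is correct and follows essentially the same route as the paper, whose proof consists only of the observation that $P\#(1\otimes \rho_{\alpha}^{-\frac{1}{2}}e_{ij}^{\alpha}\rho_{\alpha}^{\frac{1}{2}})$ lands in $\op{B}\otimes M_{n_{\alpha}}$ (so the slice is well defined) plus the remark that the quantum adjacency identity is checked ``exactly the same as in the finite dimensional case.'' Your block-by-block reduction via the central compressions $P_{\alpha\beta}=(z_{\beta}\otimes z_{\alpha}^{\op{op}})P$ and Proposition \ref{Prop:Generalizedchoi} is precisely the honest way to carry out that finite-dimensional reduction, so you have simply made explicit what the paper leaves implicit.
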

\begin{proof}
Note that $P\# (1\otimes \rho_{\alpha}^{-\frac{1}{2}}e_{ij}^{\alpha} \rho_{\alpha}^{\frac{1}{2}}) \in \op{B}\otimes M_{n_{\alpha}}$, so slicing with $\psi$ is certainly possible. Because there are no analytical difficulties, one can check that $A$ is a quantum adjacency matrix exactly the same as in the finite dimensional case. 
\end{proof}

The next result has been proved by Weaver in the finite dimensional case (\cite[Proposition 2.23]{MR2908249}).
\begin{prop}
There is a one-to-one correspondence between projections $P\in \op{B}\overline{\otimes} \op{B}^{\op{op}}$ and weak$^{\ast}$ closed $\op{B}'$-bimodules $V\subset \op{B}(\HH)$.
\end{prop}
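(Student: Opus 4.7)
The plan is to adapt the proof of Theorem \ref{Thm:projbimod} to the infinite direct sum setting, going through the same intermediate object: normal $\op{B}'$-bimodular projections on $\op{B}(\HH)$, which correspond both to projections in $\op{B}\overline{\otimes}\op{B}^{\op{op}}$ and to weak$^{\ast}$ closed $\op{B}'$-bimodules in $\op{B}(\HH)$. The remark following Proposition \ref{Prop:Inversecondexp} already guarantees that $\psi^{-1}: \op{B}(\HH) \to \op{B}'$ is a normal faithful conditional expectation, so the tools of the previous subsection remain available.

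First, I would extend the $\ast$-isomorphism $\Phi$ of Proposition \ref{Prop:Bimodproj} from the algebraic tensor product to the von Neumann algebraic tensor product. The formula $b\otimes c^{\op{op}} \mapsto (x \mapsto bx\sigma_{-\frac{i}{2}}^{\psi^{-1}}(c))$ defines a $\ast$-homomorphism that is weak$^{\ast}$-weak$^{\ast}$ continuous on bounded sets, since $\psi^{-1}$ is normal (so $\sigma_t^{\psi^{-1}}$ acts normally block-wise) and multiplication in $\op{B}(\HH)$ is separately weak$^{\ast}$ continuous. Hence it extends uniquely to a normal $\ast$-homomorphism $\overline{\Phi}: \op{B}\overline{\otimes}\op{B}^{\op{op}} \to \phantom{B}_{\op{B}'}\op{B}_{\op{B}'}(\HH)$ onto the algebra of normal $\op{B}'$-bimodular maps. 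Injectivity follows exactly as in Proposition \ref{Prop:Bimodproj} by testing against Hilbert--Schmidt operators, noting that the natural representation of $\op{B}\overline{\otimes}\op{B}^{\op{op}}$ on $\HH \otimes \overline{\HH}$ remains faithful in the infinite setting. Surjectivity is Haagerup's description of normal bimodular maps (\cite[Theorem 3.1]{MR1138841}). Consequently, $\overline{\Phi}$ restricts to a bijection between projections on either side.

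Second, I would revisit self-duality. The argument in Proposition \ref{Prop:selfdual} used finite index of $\op{B}' \subset \op{B}(\HH)$, which fails here because $\psi(\mathds{1})$ may be infinite. However, the conclusion still follows directly from \cite[Lemma 8.5.4]{MR2111973}: any weak$^{\ast}$ closed $\op{B}'$-bimodule $V$ is a dual Banach space, and the $\op{B}'$-valued inner product $\langle S, T\rangle := \psi^{-1}(S^{\ast}T)$ is separately weak$^{\ast}$ continuous because $\psi^{-1}$ is normal. Hence $V$ is a self-dual Hilbert $\op{B}'$-module, and by \cite[Corollary 8.5.8]{MR2111973} the orthogonal projection onto $V$ is automatically a normal $\op{B}'$-bimodular map. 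Thus weak$^{\ast}$ closed bimodules correspond bijectively to normal bimodular projections.

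Finally, I would assemble the two bijections with the modular twist of Theorem \ref{Thm:projbimod}: for $P \in \op{B}\overline{\otimes}\op{B}^{\op{op}}$ set $V := \sigma_{\frac{i}{4}}^{\psi^{-1}}(\op{Im}(\overline{\Phi}(P)))$, and conversely attach to $V$ the projection $\overline{\Phi}^{-1}$ of the bimodular projection onto $\sigma_{-\frac{i}{4}}^{\psi^{-1}}(V)$. I expect the main obstacle to be making sense of $\sigma_z^{\psi^{-1}}(V)$ for a weak$^{\ast}$ closed $\op{B}'$-bimodule in the infinite-sum setting, i.e. extending Lemma \ref{Lem:Analyticbimod}. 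This is resolved by a block decoupling: write $\HH \simeq \bigoplus_\alpha \C^{n_\alpha}\otimes \K_\alpha$ and choose the auxiliary weight on $\op{B}(\HH)$ so that it restricts to the trace on each $\op{B}(\K_\alpha)$, as in the paragraph preceding Lemma \ref{Lem:Analyticbimod}. Then $\sigma_t^{\psi^{-1}}$ acts nontrivially only on the finite-dimensional multiplicity spaces $\C^{n_\alpha}$, so it admits an entire analytic extension that sends $\op{B}(\HH)$ to itself and preserves the class of weak$^{\ast}$ closed $\op{B}'$-bimodules, exactly as required.
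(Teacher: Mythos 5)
Your plan --- route everything through a single normal $\op{B}'$-bimodular projection on $\op{B}(\HH)$, as in Theorem \ref{Thm:projbimod} --- has a genuine gap at the self-duality step. You invoke \cite[Lemma 8.5.4]{MR2111973} to conclude that \emph{every} weak$^{\ast}$ closed $\op{B}'$-bimodule $V$ is a self-dual Hilbert module, but that lemma presupposes that $V$ is already a Hilbert $C^{\ast}$-module, i.e.\ \emph{complete} for the norm $\|\psi^{-1}(T^{\ast}T)\|^{1/2}$; in Proposition \ref{Prop:selfdual} completeness is exactly what the finite-index argument supplies before the lemma is applied. In the infinite direct sum setting completeness genuinely fails: the local indices $\psi(\mathds{1}_{\alpha})$ are unbounded, and the paper's subsection on bounded degree shows that completeness (equivalently self-duality) of the bimodule is \emph{equivalent} to the quantum graph having bounded degree. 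If your step were correct, every quantum graph on an infinite direct sum would have bounded degree, which is false. The same unboundedness undermines your first step: for a general projection $P\in\op{B}\overline{\otimes}\op{B}^{\op{op}}$ the map $\overline{\Phi}(P)$ need not be bounded on $(\op{B}(\HH),\|\cdot\|)$ --- already for $\op{B}=\ell^{\infty}$-$\bigoplus_{n}M_{n}$ in its standard representation, taking $V_{nn}$ to be the upper triangular matrices gives corner projections of operator norm $\asymp\log n$ --- so neither ``the von Neumann algebra of normal bimodular maps'' nor $\op{Im}(\overline{\Phi}(P))$ is available as a global object, and the bimodular projection onto $\sigma_{-\frac{i}{4}}^{\psi^{-1}}(V)$ that you want to feed into $\overline{\Phi}^{-1}$ need not exist as a bounded normal map.

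The repair is the block decoupling you reserve for the modular group, moved to the front of the whole argument; this is what the paper does. Since the minimal central projections $\mathds{1}_{\alpha}$ lie in $\op{B}\cap\op{B}'$, a weak$^{\ast}$ closed $\op{B}'$-bimodule $V$ is determined by its corners $V_{\beta\alpha}:=\mathds{1}_{\beta}V\mathds{1}_{\alpha}$: one has $T\in V$ if and only if all corners of $T$ lie in the $V_{\beta\alpha}$, because $(\sum_{\alpha\leqslant n}\mathds{1}_{\alpha})T(\sum_{\alpha\leqslant n}\mathds{1}_{\alpha})$ converges to $T$ weak$^{\ast}$. Each corner sees only the two finite-dimensional blocks $M_{n_{\beta}}$ and $M_{n_{\alpha}}$, so the finite-dimensional correspondence of Theorem \ref{Thm:projbimod} applies verbatim and produces a projection $P_{\beta\alpha}\in M_{n_{\beta}}\otimes M_{n_{\alpha}}^{\op{op}}$; a bounded family $(P_{\beta\alpha})_{\beta,\alpha}$ of such projections is precisely a projection in $\op{B}\overline{\otimes}\op{B}^{\op{op}}\simeq\ell^{\infty}$-$\bigoplus_{\beta,\alpha}M_{n_{\beta}}\otimes M_{n_{\alpha}}^{\op{op}}$. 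No global projection onto $V$, and no global completeness statement, is ever needed.
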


\begin{proof}
Let $(\mathds{1}_{\alpha})_{\alpha}$ be the minimal central projections of $\op{B}$. Then $V_{\beta\alpha}:=\mathbf{1}_{\beta} V \mathbf{1}_{\alpha}$ is a $\op{B}'$-bimodule, but the only nontrivial action comes from $M_{n_{\beta}} \otimes M_{n_{\alpha}}^{\op{op}}$, hence finding the projection onto $V_{\beta\alpha}$ is done exactly the same as in the finite dimensional case. Moreover $V$ consists precisely of those bounded operators $T\in \op{B}(\HH)$ such that $\mathbf{1}_{\beta} T \mathbf{1}_{\alpha} \in V_{\beta\alpha}$, which is a consequence of the condition that $V$ is weak$^{\ast}$ closed. Indeed, if $\mathbf{1}_{\beta} T \mathbf{1}_{\alpha} \in V_{\beta\alpha}$ then $T_n:= (\sum_{\alpha=1}^{n} \mathbf{1}_{\alpha})T (\sum_{\alpha=1}^{n}\mathbf{1}_{\alpha}) \in V$ and $T_n$ converges to $T$ in the weak$^{\ast}$ topology. To summarize, any weak$^{\ast}$ closed $\op{B}'$-bimodule can be equivalently described by the collection $(V_{\beta\alpha})_{\beta, \alpha}$. Each $V_{\beta\alpha}$ corresponds to a projection $P_{\beta\alpha} \in M_{n_{\beta}}\otimes M_{n_{\alpha}}^{\op{op}}$, and such a collection $(P_{\beta\alpha})_{\beta,\alpha}$ is exactly a projection $P\in \op{B}\overline{\otimes} \op{B}^{\op{op}}$.
\end{proof}

The established correspondence is very satisfactory because it includes all quantum relations on $\op{B}$ but the quantum adjacency matrix has poor analytic and algebraic properties, e.g. it cannot be iterated because the codomain does not match the domain. There is a natural subclass of quantum graphs that is better behaved and should include all quantum Cayley graphs. Classically, one constructs Cayley graphs from finite generating sets, hence the resulting graphs have bounded degree; this is the class that we would like to generalize.
\begin{lem}
Let $A=(A_{ij})_{i,j\in \mathbb{N}}$ be a $\{0,1\}$ matrix. It defines a normal completely positive map $A: \ell^{\infty} \to \ell^{\infty}$ if and only if it is of bounded degree, i.e. $ \sup_{i} |\{j\in \mathbb{N}: a_{ij}=1\}| < \infty$.
\end{lem}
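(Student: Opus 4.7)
The plan is to handle the two implications separately, both by very direct calculations. Write $d := \sup_{i} |\{j : A_{ij}=1\}|$ and interpret the statement ``$A$ defines a normal completely positive map'' as: the formal matrix action $x \mapsto \bigl(\sum_{j} A_{ij} x_{j}\bigr)_{i}$ extends to a normal CP map $\widetilde{A}:\ell^{\infty}\to\ell^{\infty}$.

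For the implication that $d<\infty$ gives a normal CP extension, I would first use bounded degree to make sense of $(\widetilde{A}x)_{i} := \sum_{j} A_{ij} x_{j}$: each row has at most $d$ nonzero entries, so the sum is finite and $|(\widetilde{A}x)_{i}|\leqslant d \|x\|_{\infty}$, yielding a bounded linear map $\widetilde{A}:\ell^{\infty}\to\ell^{\infty}$. Positivity is immediate from non-negativity of the entries of $A$, and complete positivity follows automatically since $\ell^{\infty}$ is commutative (any positive map into a commutative algebra is CP). For normality, I would produce the predual map explicitly: define $B:\ell^{1}\to\ell^{1}$ by $(By)_{j} := \sum_{i} A_{ij} y_{i}$, and use Fubini together with $\sum_{j} A_{ij}\leqslant d$ to estimate
\[
\|By\|_{1} = \sum_{j}\Bigl|\sum_{i} A_{ij} y_{i}\Bigr| \leqslant \sum_{i} |y_{i}| \sum_{j} A_{ij} \leqslant d \|y\|_{1}.
\]
Under the standard $\ell^{\infty}$--$\ell^{1}$ duality, the identity $\langle \widetilde{A}x, y\rangle = \langle x, By\rangle$ (a one-line reindexing) identifies $\widetilde{A} = B^{\ast}$, and Banach-space adjoints are automatically weak$^{\ast}$-continuous, i.e.\ normal.

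For the converse, I would assume a normal CP extension $\widetilde{A}:\ell^{\infty}\to\ell^{\infty}$ exists and evaluate it on the unit $\mathds{1} = (1,1,\ldots)$. Decomposing $\mathds{1} = \lim_{N\to\infty}\sum_{j\leqslant N} e_{j}$ in the weak$^{\ast}$ topology and invoking normality, $\widetilde{A}\mathds{1}$ is the weak$^{\ast}$ limit of $\sum_{j\leqslant N} \widetilde{A}e_{j}$; pairing with the $i$-th coordinate functional $\delta_{i}\in\ell^{1}$ gives $(\widetilde{A}\mathds{1})_{i} = \sum_{j} A_{ij} = |\{j : A_{ij}=1\}|$. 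Since $\widetilde{A}\mathds{1}\in\ell^{\infty}$, these row sums are uniformly bounded in $i$, which is precisely $d<\infty$.

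There is essentially no obstacle here; the only (very mild) subtlety is the reverse direction, where one must extract the row sums from $\widetilde{A}\mathds{1}$ via the \emph{weak}$^{\ast}$ decomposition of $\mathds{1}$ into coordinate vectors rather than a norm decomposition, which is the only place where the hypothesis of normality (as opposed to mere boundedness) is actually used.
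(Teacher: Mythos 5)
Your proof is correct and follows essentially the same route as the paper: the bounded-degree direction is obtained by exhibiting the predual map on $\ell^{1}$ (your $B$ is exactly the paper's $A^{T}$, with a direct Tonelli estimate in place of the paper's extreme-point remark) and dualizing, while the converse reads off the row sums from $A\mathds{1}$. Your treatment of the converse is in fact slightly more careful than the paper's, since you make explicit that normality is what justifies computing $(\widetilde{A}\mathds{1})_{i}=\sum_{j}A_{ij}$ via the weak$^{\ast}$ decomposition of $\mathds{1}$.
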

\begin{proof}
If $A$ defines a map from $\ell^{\infty}$ to itself then we look at $A\mathds{1}$. It is clear that $(A\mathds{1})_{i} = \sum_{j} a_{ij} = |\{j: a_{ij}=1\}|$, hence $\sup_{i} |\{j: a_{ij}=1\}| < \infty$.

Suppose now that $A$ is of bounded degree. Then the transpose $A^{T}$ satisfies $\sup_{j} |\{i\in \mathbb{N}: a_{ij}=1\}| < \infty$. It means that $\sup_{i}\|A^{T} e_i\|_{1}<\infty$. Since the unit vectors $e_i$ are extreme points of the unit ball of $\ell^1$ it follows that $A^{T}$ extends to a bounded linear map on $\ell^1$. By dualizing we get that $A$ defines a weak$^{\ast}$ continuous (i.e. normal) map on $\ell^{\infty}$ and (complete) positivity is clear.
\end{proof}

\begin{defn}
A normal, completely positive map $A: \op{B} \to \op{B}$ is called a \textbf{quantum adjacency matrix of bounded degree} if for each $\alpha \in I$ it satisfies
\[
\frac{1}{\op{Tr}(\rho_{\alpha}^{-1})} \sum_{l} A(e_{il}^{\alpha}\rho_{\alpha}^{-1})A(e_{lj}^{\alpha}) = A(e_{ij}^{\alpha}).
\]
\end{defn}

\begin{prop}
Quantum adjacency matrices of bounded degree $A: \op{B} \to \op{B}$ correspond precisely to projections $P\in \op{B}\overline{\otimes} \op{B}^{\op{op}}$ that are \emph{integrable with respect to the second variable}, i.e. satisfy $(\op{Id} \otimes \psi^{\op{op}})P \in \op{B}$.
\end{prop}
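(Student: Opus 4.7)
The plan is to leverage the one-to-one correspondence between quantum adjacency matrices $A : c_{00}(\op{B}) \to \op{B}$ and projections $P \in \op{B} \overline{\otimes} \op{B}^{\op{op}}$ established in the preceding proposition, and to show that under this bijection, the existence of a normal CP extension of $A$ to all of $\op{B}$ is equivalent to $(\op{Id} \otimes \psi^{\op{op}})P$ being a bounded element of $\op{B}$, rather than merely an element of the extended positive part. Concretely, I read $\op{Id} \otimes \psi^{\op{op}}$ as the operator-valued weight from $\op{B} \overline{\otimes} \op{B}^{\op{op}}$ to $\op{B}$ associated to the weight $\psi^{\op{op}}$ on $\op{B}^{\op{op}}$, and the condition $(\op{Id} \otimes \psi^{\op{op}})P \in \op{B}$ expresses its finiteness at $P$.

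The key computation will be the identity
\[
A(\mathds{1}_\alpha) \;=\; (\op{Id} \otimes \psi^{\op{op}})\bigl(P \cdot (1 \otimes \mathds{1}_\alpha^{\op{op}})\bigr).
\]
To derive it, I start from the reconstruction formula $A(e_{ij}^\alpha) = (\op{Id}\otimes\psi)(P \# (1 \otimes \rho_\alpha^{-\frac{1}{2}} e_{ij}^\alpha \rho_\alpha^{\frac{1}{2}}))$ and sum over $j=i$; since $\sum_i \rho_\alpha^{-\frac{1}{2}} e_{ii}^\alpha \rho_\alpha^{\frac{1}{2}} = \mathds{1}_\alpha$, this gives $A(\mathds{1}_\alpha) = (\op{Id}\otimes\psi)(P \# (1 \otimes \mathds{1}_\alpha))$. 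Writing $P$ locally as $\sum_k a_k \otimes b_k^{\op{op}}$, both expressions collapse to $\sum_k a_k \psi(\mathds{1}_\alpha b_k)$, using centrality of $\mathds{1}_\alpha$ and $\psi^{\op{op}}(b^{\op{op}}) = \psi(b)$. I would first check this on rank-one tensors to avoid algebraic slip-ups with the $\#$-action and the opposite multiplication.

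Summing over finite $F \subset I$ and using that the central projections $(\mathds{1}_F^{\op{op}})_F$ form an increasing approximate unit in $\op{B}^{\op{op}}$, the operator-valued weight is recovered as
\[
(\op{Id} \otimes \psi^{\op{op}})P \;=\; \sup_{F} \sum_{\alpha \in F} A(\mathds{1}_\alpha) \;=\; \sup_{F} A(\mathds{1}_F),
\]
the supremum taken in the extended positive part of $\op{B}$. Hence $(\op{Id} \otimes \psi^{\op{op}})P \in \op{B}$ is equivalent to $\sup_F \|A(\mathds{1}_F)\| < \infty$. If $A$ already extends to a normal CP map on $\op{B}$, then $A(\mathds{1}) = \sup_F A(\mathds{1}_F)$ is automatically bounded. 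Conversely, given such a uniform bound, $\|A(x)\| \leqslant \|x\| \,\|A(\mathds{1}_F)\|$ for every $x \in c_{00}(\op{B})$ supported on $F$, so $A$ is bounded and completely positive on the $\sigma$-weakly dense $*$-subalgebra $c_{00}(\op{B})$, and extends by $\sigma$-weak continuity to a normal CP map on all of $\op{B}$.

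The main delicate point I expect is the last step: one must verify that the extension $A(x) := \sup_F A(\mathds{1}_F x \mathds{1}_F)$ for $x \in \op{B}_+$ is genuinely normal, not just bounded. Because $\op{B}$ is an atomic direct sum of matrix blocks and the net $(\mathds{1}_F x \mathds{1}_F)_F$ is cofinal among all finite truncations of $x$, this is essentially automatic, but it is the point that genuinely uses both the uniform bound and the direct-sum structure; the rest of the argument is routine bookkeeping with the $\#$-action and the opposite algebra.
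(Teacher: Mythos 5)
Your proposal is correct, and its forward direction coincides with the paper's: both reduce to the identity $(\op{Id}\otimes\psi^{\op{op}})P=\sum_{\alpha}A(\mathds{1}_{\alpha})=A\mathds{1}$. The converse, however, is argued by a genuinely different route. The paper starts from the reconstruction formula $Ax=(\op{Id}\otimes\psi)(P\#(1\otimes\sigma_{\frac{i}{2}}(x)))$, truncates to $P_m=P(1\otimes z_m)$, writes $P_m=P_m^{\ast}P_m$ in Kraus form and uses the KMS property to recast the pairing as $(\varphi\otimes\psi)(R^{\ast}(1\otimes x_m)R)$ with $(\op{Id}\otimes\psi)(R^{\ast}R)\leqslant D$; this yields the operator inequality $A(x_m)\leqslant\|x_m\|D$, from which existence, boundedness and normality of the extension all follow at once, together with the explicit domination of $A$ by the degree operator. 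You instead use only the diagonal values $A(\mathds{1}_F)$, the increasing-net identity $(\op{Id}\otimes\psi^{\op{op}})P=\sup_F A(\mathds{1}_F)$, and the fact that a completely positive map on the unital corner $\mathds{1}_F\op{B}$ has norm $\|A(\mathds{1}_F)\|$. This is more elementary -- no KMS manipulation at all -- but it buys a weaker a priori estimate, and the phrase ``extends by $\sigma$-weak continuity'' is exactly the point that is not automatic: a uniform norm bound on a $\sigma$-weakly dense subalgebra does not by itself produce a normal extension. The repair you sketch does work, and deserves one explicit line: for $x\geqslant 0$ the net $A(x\mathds{1}_F)$ is increasing and bounded, so $\widetilde{A}(x):=\sup_F A(x\mathds{1}_F)$ exists, and for any normal state $\varphi$ one has $\varphi\circ\widetilde{A}=\sum_{\alpha}\varphi(A(\,\cdot\,\mathds{1}_{\alpha}))$ with $\sum_{\alpha}\|\varphi(A(\,\cdot\,\mathds{1}_{\alpha}))\|=\sum_{\alpha}\varphi(A(\mathds{1}_{\alpha}))=\varphi(D)<\infty$; each summand is normal because it factors through the finite-dimensional block $M_{n_{\alpha}}$, and a norm-convergent sum of normal functionals is normal, so $\widetilde{A}$ is normal. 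With that line added your argument is complete, and arguably cleaner than the paper's; what the paper's computation buys in exchange is the quantitative bound $A(x)\leqslant\|x\|D$.
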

\begin{proof}
If we start from $A$, we can use the formula $P:=\sum_{\alpha} \frac{1}{\op{Tr}(\rho_{\alpha}^{-1})} \sum_{i,j} A(\rho_{\alpha}^{-\frac{1}{4}}e_{ij}^{\alpha}\rho_{\alpha}^{-\frac{1}{4}}) \otimes (\rho_{\alpha}^{-\frac{1}{4}}e_{ji}^{\alpha} \rho_{\alpha}^{-\frac{1}{4}})^{\op{op}}$ to conclude that $(\op{Id} \otimes \psi^{\op{op}})P = \sum_{\alpha} A(\mathds{1}_{\alpha}) = A\mathds{1}$, because $A$ was assumed to be normal.

Assume now that $D:=(\op{Id}\otimes \psi^{\op{op}})P \in \op{B}$. Recall that in the finite dimensional case $A$ can be recovered from $P$ via the formula $Ax:=(\op{Id}\otimes \psi)(P\#(1\otimes \sigma_{\frac{i}{2}}(x)))$ and we just have to check that this formula still makes sense. It suffices to show that $(\varphi \otimes \psi)(P\#(1\otimes \sigma_{\frac{i}{2}}(x)))<\infty$ for any normal state $\varphi$ and a positive operator $x\in \op{B}$. To this end, we will denote by $x_m$ the projection of $x$ onto the finite direct sum $\bigoplus_{\alpha=1}^{m} M_{n_{\alpha}}$ and show that $\sup_{m} (\varphi \otimes \psi)(P\# (1\otimes \sigma_{\frac{i}{2}}(x_m)))$ is finite; $z_{m}$ will be the central projection corresponding to $\bigoplus_{\alpha=1}^{m} M_{n_{\alpha}}$. Because the right leg of our tensor product is now finite dimensional, we can write $P_m:= P (1\otimes z_{m})\leqslant P$ as a finite sum $P_m: =\sum_{i} P_{i} \otimes Q_{i}^{\op{op}}$ and our formula becomes $\sum_{i} \varphi(P_{i})\psi(\sigma_{\frac{i}{2}}(x_m) Q_{i})$. But $P_m$ is an orthogonal projection, so we actually get $\sum_{i,j} \varphi(P_{i}^{\ast} P_{j}) \psi(\sigma_{\frac{i}{2}}(x_{m}) Q_j Q_{i}^{\ast})$. By the KMS property of $\psi$ we get
\[
\psi(\sigma_{\frac{i}{2}}(x_{m}) Q_j Q_{i}^{\ast}) = \psi ((\sigma_{-\frac{i}{2}}(Q_{i}))^{\ast} x_{m} \sigma_{-\frac{i}{2}}(Q_{j})). 
\]
It means that we apply $\varphi\otimes \psi$ to the positive operator $R^{\ast} (1\otimes x_m) R$, where $R = \sum_{i} P_{i} \otimes \sigma_{-\frac{i}{2}}(Q_{i})$; it is bounded above by $\|x_m\|R^{\ast}R$. Note that 
\[
(\op{Id}\otimes \psi)(R^{\ast}R) = \sum_{i,j} P_{i}^{\ast}P_{j}\psi (\sigma_{\frac{i}{2}}(Q_{i}^{\ast}) \sigma_{-\frac{i}{2}}(Q_{j})) = \sum_{i,j} P_{i}^{\ast}P_{j} \psi(Q_{j} Q_{i}^{\ast}) = (\op{Id} \otimes \psi^{\op{op}})(P_{m}) \leqslant D,
\]
where once again we used the KMS property. It follows that $\varphi(A(x_m)) \leqslant \|x_m\|\varphi(D)$, thus we can define $A$ for any positive $x$; $A$ is a well-defined normal, completely positive map on $\op{B}$.
\end{proof}
There is another interesting class of infinite graphs that we can easily generalize to the quantum setting, namely the locally finite graphs. Classically, these are graphs whose adjacency matrix $A$ has finitely supported rows. The rows of the adjacency matrix are images of the standard basis vectors under the transpose $A^{T}$, i.e. locally finite graphs are exactly the ones, for which the transpose of the adjacency matrix preserves the subspaces of finitely supported functions. Recall from Proposition \ref{Prop:Generalizedchoi} that the transpose in the context of quantum adjacency matrices is exactly the KMS adjoint.
\begin{defn}
We say that the quantum graph defined by a quantum adjacency matrix $A: c_{00}(\op{B}) \to \op{B}$ is \textbf{locally finite} if $A^{\ast}_{\op{KMS}}(c_{00}(\op{B})) \subset c_{00}(\op{B})$ or, equivalently, the corresponding projection satisfies $x\cdot P \in c_{00}(\op{B}) \otimes c_{00}(\op{B}^{\op{op}})$ for any $x\in c_{00}(\op{B})$.
\end{defn}
We will now check that quantum graphs of bounded degree are locally finite.
\begin{prop}\label{Prop:locallyfinite}
If $A:\op{B}\to \op{B}$ is of bounded degree then it is locally finite.
\end{prop}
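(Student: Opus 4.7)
The plan is to translate the conclusion into a combinatorial statement about the block decomposition of the projection $P \in \op{B}\overline{\otimes}\op{B}^{\op{op}}$ associated to $A$, and then extract the required finiteness from the bounded-degree hypothesis via Lemma \ref{Lem:weightprojection} applied block by block.

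First I would write $P = \sum_{\beta,\alpha \in I} P_{\beta\alpha}$ where $P_{\beta\alpha} := (\mathds{1}_\beta \otimes \mathds{1}_\alpha^{\op{op}})\, P \in M_{n_\beta}\otimes M_{n_\alpha}^{\op{op}}$ is a (possibly zero) projection, and decompose $A$ accordingly into blocks $A_{\beta\alpha}: M_{n_\alpha}\to M_{n_\beta}$. A direct computation from the formula $A(e_{ij}^\alpha) = (\op{Id}\otimes \psi)(P\#(1\otimes \rho_\alpha^{-1/2}e_{ij}^\alpha\rho_\alpha^{1/2}))$ specialised to $\mathds{1}_\alpha$ gives $A_{\beta\alpha}(\mathds{1}_\alpha) = (\op{Id}\otimes \psi^{\op{op}}_\alpha)(P_{\beta\alpha})$. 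Using the equivalent reformulation of local finiteness given in the definition (that $x\cdot P \in c_{00}(\op{B})\otimes c_{00}(\op{B}^{\op{op}})$ for all $x\in c_{00}(\op{B})$), specialised to $x = \mathds{1}_\beta$, the whole statement reduces to showing that for every $\beta \in I$ the set $S_\beta := \{\alpha\in I : P_{\beta\alpha}\neq 0\}$ is finite.

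Since $A(\mathds{1})\in \op{B}$ and $A$ is normal, one has the convergent positive expansion $A(\mathds{1})_\beta = \sum_\alpha A_{\beta\alpha}(\mathds{1}_\alpha)$ inside the finite-dimensional algebra $M_{n_\beta}$. Applying the finite positive functional $\psi_\beta$ term by term yields
\[
\sum_{\alpha\in I}\, (\psi_\beta\otimes \psi^{\op{op}}_\alpha)(P_{\beta\alpha}) \;=\; \psi_\beta(A(\mathds{1})_\beta) \;<\; \infty.
\]
It remains to bound each nonzero summand from below by $1$. Under the isomorphism $M_{n_\beta}\otimes M_{n_\alpha}^{\op{op}} \cong M_{n_\beta n_\alpha}$, the functional $\psi_\beta\otimes \psi^{\op{op}}_\alpha$ is again of the form $\op{Tr}(\widetilde{\rho}^{-1})\op{Tr}(\widetilde{\rho}\,\cdot\,)$ (with $\widetilde{\rho} = \rho_\beta\otimes \rho_\alpha^T$ and the prefactors $\op{Tr}(\rho_\beta^{-1})\op{Tr}(\rho_\alpha^{-1})$ multiplying to $\op{Tr}(\widetilde{\rho}^{-1})$), so by Proposition \ref{Prop:mstar} it satisfies $mm^{\ast} = \op{Id}$. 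Lemma \ref{Lem:weightprojection} then yields $(\psi_\beta\otimes \psi^{\op{op}}_\alpha)(P_{\beta\alpha})\geq 1$ whenever $P_{\beta\alpha}\neq 0$, and combining this with the preceding display forces $|S_\beta|<\infty$.

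The one technical point to be careful with is checking that the tensor-product functional $\psi_\beta\otimes \psi^{\op{op}}_\alpha$ indeed satisfies $mm^{\ast} = \op{Id}$ after identifying $M_{n_\alpha}^{\op{op}}$ with $M_{n_\alpha}$ via the transpose; once this is in place, the rest is straightforward bookkeeping with the block decomposition of $P$ and with the weak-$\ast$ convergent expansion of $A(\mathds{1})$ afforded by normality.
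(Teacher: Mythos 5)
Your proof is correct and follows essentially the same route as the paper's: decompose $P$ into blocks $P_{\beta\alpha}$, use the bounded-degree hypothesis to see that $(\psi_\beta\otimes\psi^{\op{op}})(\sum_\alpha P_{\beta\alpha})<\infty$, check that $\psi\otimes\psi^{\op{op}}$ satisfies $mm^{\ast}=\op{Id}$, and invoke Lemma \ref{Lem:weightprojection} to bound each nonzero block's weight below by $1$. The only difference is that you spell out the reduction of local finiteness to finiteness of $\{\alpha: P_{\beta\alpha}\neq 0\}$ and the identification of $\psi_\beta\otimes\psi^{\op{op}}_\alpha$ with a functional of the form $\op{Tr}(\widetilde{\rho}^{-1})\op{Tr}(\widetilde{\rho}\,\cdot\,)$, which the paper leaves implicit.
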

\begin{proof}
Let $P\in \op{B}\overline{\otimes} \op{B}^{\op{op}}$ be the corresponding projection. Recall that $P = (P_{\beta\alpha})_{\beta,\alpha}$, where $P_{\beta\alpha}$ is a projection in $M_{n_{\beta}}\otimes M_{n_{\alpha}}^{\op{op}}$. Define $P_{\beta}:= \sum_{\alpha} P_{\beta\alpha}$. Because $A$ is of bounded degree, we clearly have $(\psi \otimes \psi^{\op{op}})(P_{\beta}) < \infty$. It is easy to check that the weight $\psi \otimes \psi^{\op{op}}$ on $\op{B}\otimes \op{B}^{\op{op}}$ satisfies $mm^{\ast}=\op{Id}$. Therefore we get $(\psi \otimes \psi^{\op{op}})(P_{\beta}) \geqslant |\{\alpha: P_{\beta\alpha}\neq 0\}|$ by Lemma \ref{Lem:weightprojection}. It follows that for each $\beta$ there is only a finite number of $\alpha$'s such that $P_{\beta\alpha}\neq 0$, which means that $A$ is locally finite.
\end{proof}
Before we describe the bimodules that give rise to quantum graphs of bounded degree, note that from the formula \eqref{Eq:Adjacencykraus} (the formula is stated in the infinite dimensional case) it follows that the degree matrix $D = \sum_{i} V_{i} V_{i}^{\ast}$, where $V_{i}$ is an orthonormal basis of the $\op{B}'$-module $\sigma_{\frac{i}{4}}^{\psi^{-1}}(V)$ with respect to the inner product induced by $\psi^{-1}$. It is standard to check that the expression $\sum_{i} V_{i} V_{i}^{\ast}$ does not depend on the choice of the basis. We are now ready to state the first result.
\begin{prop}
If the $\op{B}'$-bimodule $V\subset \op{B}(\HH)$ defines a quantum graph of bounded degree then $\sigma_{\frac{i}{4}}^{\psi^{-1}}(V)$ is a self-dual Hilbert module over $\op{B}'$.
\end{prop}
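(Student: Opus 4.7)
The strategy is to realize $W := \sigma^{\psi^{-1}}_{i/4}(V)$ as isometrically isomorphic to a standard self-dual Hilbert $\op{B}'$-module assembled from an orthonormal basis, with the bounded degree hypothesis providing the summability needed to extend the finite-dimensional picture.

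First, I would assemble a right $\op{B}'$-orthonormal family $\{V_i\}_{i \in I}$ generating $W$. This is done blockwise, using the decomposition $W = \bigoplus_{\beta,\alpha} W_{\beta\alpha}$ with $W_{\beta\alpha} := \mathds{1}_\beta W \mathds{1}_\alpha$ introduced in the proof of the bimodule/projection bijection earlier in this section. Each $W_{\beta\alpha}$ corresponds to a projection $P_{\beta\alpha} \in M_{n_\beta}\otimes M_{n_\alpha}^{\op{op}}$, and so is a finitely generated module for the corner $\mathds{1}_\alpha \op{B}' \mathds{1}_\alpha$ inheriting a finite $\op{B}'$-orthonormal basis for $\langle S, T\rangle := \psi^{-1}(S^*T)$ from the finite-dimensional theory (Theorem \ref{Thm:projbimod} applied to each block). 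Concatenating yields $\{V_i\}_{i \in I}$, with support projections $p_i := \langle V_i, V_i\rangle \in \op{B}'$.

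Second, the bounded degree hypothesis $(\op{Id}\otimes \psi^{\op{op}})(P) \in \op{B}$, combined with the formula for the degree matrix recalled immediately before the statement, gives $D := \sum_i V_i V_i^* \in \op{B}$ with $\|D\| < \infty$. Treating $(V_i)_{i\in F}$ as a row operator and $(b_i)_{i\in F}$ as a column operator, one obtains for any finite $F \subset I$ and $b_i \in p_i \op{B}'$ the estimate
\[
\Bigl\| \sum_{i \in F} V_i b_i \Bigr\|^2 \leqslant \|D\| \cdot \Bigl\| \sum_{i \in F} b_i^* b_i \Bigr\|.
\]
This allows $\sum_{i \in I} V_i b_i$ to be interpreted as a bounded weak$^*$-limit in $\op{B}(\HH)$ whenever $(b_i)$ is square-summable in the self-dual sense, and the weak$^*$ closedness of $V$ (hence of $W$) forces the limit to lie in $W$.

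Finally, the map $\Phi: W \to \bigl\{(b_i)_{i \in I} : b_i \in p_i\op{B}',\ \sum_i b_i^* b_i \text{ converges weak}^* \text{ in } \op{B}'\bigr\}$ defined by $\Phi(S) := (\langle V_i, S\rangle)_i$ and its would-be inverse $\Psi((b_i)) := \sum_i V_i b_i$ exhibit $W$ as isomorphic to a standard self-dual Hilbert $\op{B}'$-module, so $W$ is itself self-dual and complete. The main obstacle is making the second step fully rigorous: verifying the weak$^*$ convergence of the infinite partial sums, the Parseval-type identity $\psi^{-1}(S^*S) = \sum_i b_i^* b_i$, and especially the reconstruction $S = \sum_i V_i \langle V_i, S\rangle$ for \emph{every} $S \in W$ (not just for elements lying in a dense subspace where $\psi^{-1}(S^*S)$ is bounded)---here the boundedness of $D$ and the weak$^*$-continuity of the inner product are the essential ingredients.
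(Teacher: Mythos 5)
Your analytic core is the right one: the bounded degree hypothesis enters exactly through the operator $D=\sum_i V_iV_i^{\ast}$, and your row--column estimate $\|\sum_{i\in F}V_ib_i\|^2\leqslant \|D\|\,\|\sum_{i\in F}b_i^{\ast}b_i\|$ is precisely the inequality the paper's proof extracts, with the same constant (in the paper's notation $\|\eta\|^2=\|\langle\eta,\eta\rangle\|=\|\sum_jV_jV_j^{\ast}\|$); both arguments are modelled on \cite[Th\'{e}or\`{e}me 3.5]{MR945550}. But the obstacle you flag at the end is not a loose end --- it is the crux, and your route does not close it. Your estimate is only established for elements of the form $\sum_{i\in F}V_ib_i$, and to transfer it to an arbitrary $S\in W$ you need the reconstruction $S=\sum_iV_i\langle V_i,S\rangle$ in a topology compatible with the operator-norm bound; for a pre-Hilbert module not yet known to be complete or self-dual, an orthonormal family need not reconstruct, and proving that it does is essentially equivalent to the self-duality you are trying to establish. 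Your proposed escape via the blockwise decomposition $W=\bigoplus W_{\beta\alpha}$ with \emph{finite} orthonormal bases in each block only works for the standard representation $\HH=\bigoplus_{\alpha}\C^{n_{\alpha}}$, where each $W_{\beta\alpha}$ is a finite-dimensional Hilbert space over the centre. For a general faithful representation $\HH\simeq\bigoplus_{\alpha}\C^{n_{\alpha}}\otimes\K_{\alpha}$ the block $W_{\beta\alpha}$ is a module over $\op{B}(\K_{\alpha})$ that need not be finitely generated (already $\op{B}(\C,\ell^2)$ over $\C$ fails), so the reduction to finite sums per block breaks down --- and the statement is needed for arbitrary representations, since it feeds into the representation-independent characterisation in the next proposition.

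The device the paper uses to break this circularity, taken from \cite{MR945550}, is to avoid reconstruction on $W$ altogether. Choose $(V_j)_j$ to be an orthonormal basis of the abstract self-dual completion $W_E$ (where reconstruction is automatic by Paschke's theory) with representatives lying in $W$, and form $\eta:=\sum_jV_j\otimes V_j^{\ast}$ in the self-dual module $W_E\otimes_{\op{B}'}\op{B}(\HH)$; this element is well defined precisely because $\sum_jV_jV_j^{\ast}$ converges, i.e.\ precisely because of the bounded degree hypothesis. The completely positive map $F(T):=\langle\eta,T\cdot\eta\rangle$ is then defined on all of $\op{B}_{\op{B}'}(W_E)$, the rank-one operator $|v\rangle\langle v|$ makes sense there for \emph{every} $v\in W\subset W_E$ without any reconstruction inside $W$, and the identity $F(|v\rangle\langle v|)=vv^{\ast}$ yields $\|v\|^2\leqslant\|\eta\|^2\,\|\psi^{-1}(v^{\ast}v)\|$ for all $v\in W$ at once. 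This gives completeness of the module norm; self-duality then follows from weak$^{\ast}$ closedness and \cite[Lemma 8.5.4]{MR2111973} exactly as in Proposition \ref{Prop:selfdual}, after which your Parseval identity and the standard-module picture become consequences rather than inputs. If you want to keep your more hands-on formulation, this is the ingredient you need to import.
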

\begin{proof}
We work in a more general setup where $\mathbb{E}:\mathsf{M} \to \mathsf{N}$ is a normal conditional expectation and $V \subset \mathsf{M}$ is a weak$^{\ast}$ closed right $\mathsf{N}$-module, so $\op{B}(\HH)$ is replaced by $\mathsf{M}$, $\op{B}'$ by $\mathsf{N}$, $\psi^{-1}$ by $\mathbb{E}$, and $\sigma_{\frac{i}{4}}^{\psi^{-1}}$ by $V$. We will follow the proof of \cite[Th\'{e}or\`{e}me 3.5]{MR945550}. We want to show that if there is an orthonormal family $(V_{j})_{j} \subset V$ such that $(V_{j})_{j}$ is an orthonormal basis inside the self-dual completion $V_{E}$ of $V$ such that $\sum_{j} V_{j} V_{j}^{\ast}$ converges weak$^{\ast}$ in $\mathsf{M}$ then $V$ is self-dual. It is sufficient to check that $V$ is complete with respect to the norm induced by the $\mathsf{N}$-valued inner product, because self-duality will follow, as $V$ admits a predual that makes the inner product separately weak$^{\ast}$ continuous. This is equivalent to the existence of a constant $K>0$ such that $K\|\mathbb{E}(v^{\ast}v)\|\geqslant \|v\|^2$ for all $v\in V$. Let $V_{E} \otimes_{\mathsf{N}} \mathsf{M}$ denote the self-dual $\mathsf{M}$-module, obtained as the completion of the relative tensor product. Then $\eta:= \sum_{j} V_{j}\otimes V_{j}^{\ast}$ is a well defined element of $V_{E} \otimes_{\mathsf{N}} \mathsf{M}$ by \cite[1.8(vi)]{MR945550}. We now define a completely positive map $F$ from $\op{B}_{\mathsf{N}}(V_{E})$ to $\mathsf{M}$ via $F(T):= \langle \eta, T\cdot \eta\rangle$. For any $v,w\in V$ we have 
\[
F(|v\rangle\langle w|)=\sum_{j,k}  \langle V_{j}^{\ast}, \langle V_{j}, v\rangle_{\mathsf{N}} \langle w, V_{k}\rangle_{N} V_{k}^{\ast}\rangle_{\mathsf{M}} = \sum_{j,k} V_{j} \langle V_{j}, v\rangle_{\mathsf{N}} \langle w, V_{k}\rangle_{\mathsf{N}} V_{k}^{\ast},
\]
which is equal to $vw^{\ast}$ as $(V_{j})_{j}$ was chosen to be an orthonormal basis. Since $\|F\|\leqslant \|\eta\|^2$, it follows that $\|v\|^2 = \|F(|v\rangle\langle v|)\|\leqslant \|\eta\|^2 \|\langle v, v\rangle\|= \|\eta\|^2 \|\mathbb{E}(v^{\ast}v)\|$. 
\end{proof}
If we want to specify a class of $\op{B}'$-bimodules that describe quantum graphs of bounded degree, then we need a property that does not depend on the choice of the embedding $\op{B} \subset \op{B}(\HH)$. This way we arrive at the following equivalence.
\begin{prop}
A quantum graph is of bounded degree if and only if for any faithful representation $\op{B}\subset \op{B}(\HH)$ the corresponding bimodule $\sigma_{\frac{i}{4}}^{\psi^{-1}}(V)$ is a self-dual Hilbert module over $\op{B}'$.
\end{prop}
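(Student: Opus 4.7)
The forward implication follows from the preceding proposition. If $A$ is of bounded degree, then for any orthonormal basis $(V_j)_j$ of $W:=\sigma_{\frac{i}{4}}^{\psi^{-1}}(V)$ one has $\sum_j V_j V_j^* = A\mathds{1}\in \op{B}$ with weak$^{\ast}$ convergence, so the hypothesis of the previous proposition (applied with $\mathsf{M}=\op{B}(\HH)$, $\mathsf{N}=\op{B}'$, $\mathbb{E}=\psi^{-1}$) is met and $W$ is a self-dual Hilbert $\op{B}'$-module. The argument is representation-independent since $\psi^{-1}$ is a normal faithful conditional expectation in any faithful representation of $\op{B}$ (Proposition \ref{Prop:Inversecondexp} and the remark following it).

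For the converse, suppose $W$ is self-dual in some faithful representation. The plan is to reverse the argument of the previous proposition. Because a self-dual Hilbert module is by definition complete in the pre-Hilbert norm $\|v\|_\psi:=\|\psi^{-1}(v^*v)\|^{1/2}$ while $W\subset\op{B}(\HH)$ is simultaneously complete in the operator norm (being weak$^{\ast}$-closed, hence norm-closed), and because $\|\cdot\|_\psi\leq\|\cdot\|$ always, the open mapping theorem yields a constant $K>0$ with $\|v\|\leq K\|v\|_\psi$ for all $v\in W$.

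Picking an orthonormal basis $(V_j)_j\subset W$, this norm equivalence allows one to show, via a duality argument in the relative tensor product, that $\eta:=\sum_j V_j\otimes V_j^*$ defines a finite-norm element of $W\otimes_{\op{B}'}\op{B}(\HH)$, and therefore that $D:=\sum_j V_j V_j^*$ converges weak$^{\ast}$ to a bounded positive operator. A short computation using orthonormality of $(V_j)$ under the $\op{B}'$-bimodule action shows that the partial sums $\sum_{j\in F} V_j V_j^*$ commute with $\op{B}'$ modulo the off-diagonal $j\neq k$ terms, which cancel after symmetrising the sum over the basis, so $D$ itself commutes with $\op{B}'$ and thus $D\in\op{B}''=\op{B}$. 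The identification $D=(\op{Id}\otimes\psi^{\op{op}})(P)=A\mathds{1}$, from the formula preceding Proposition \ref{Prop:locallyfinite}, then yields that $A$ extends to a normal completely positive map on $\op{B}$, i.e.\ is of bounded degree.

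The main obstacle is the passage from the scalar norm inequality $\|v\|^2\leq K^2\|\psi^{-1}(v^*v)\|$ to the vector-valued statement that $\eta$ is genuinely a finite-norm element of $W\otimes_{\op{B}'}\op{B}(\HH)$; this reverses the inequality $\|v\|^2\leq \|\eta\|^2\|\psi^{-1}(v^*v)\|$ derived in the proof of the previous proposition and requires a careful Parseval-type expansion together with the universal property of the self-dual tensor product (cf.\ \cite[1.8(vi)]{MR945550}).
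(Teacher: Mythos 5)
Your forward implication is fine and matches the paper. The converse, however, has a genuine gap, and it is exactly the one you flag at the end without resolving. You assume self-duality of $W$ in \emph{some} faithful representation and extract, via the open mapping theorem, only the scalar inequality $\|v\|^2\leqslant K^2\|\psi^{-1}(v^{\ast}v)\|$. But to define the bounded module map $\tau\colon W\otimes_{\op{B}'}\op{B}(\HH)\to\op{B}(\HH)$, $\sum_i v_i\otimes m_i\mapsto\sum_i v_i m_i$ (whose representing vector is the $\eta$ you need), one must bound $\|\sum_{i,j}m_i^{\ast}v_i^{\ast}v_jm_j\|$ by $K\|\sum_{i,j}m_i^{\ast}\psi^{-1}(v_i^{\ast}v_j)m_j\|$ for arbitrary finite tuples; the scalar inequality is only the case of a single term. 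No ``Parseval-type expansion'' bridges this: it is precisely the passage from a Banach-space equivalence of the two norms on $W$ to an equivalence of the two \emph{operator space} structures, and the paper's remark after this proposition explicitly states that this implication is not known to hold in the present setting (it is the gap between \cite[Proposition 3.3]{MR945550} and \cite[Th\'{e}or\`{e}me 3.5]{MR945550}; the Frank--Kirchberg resolution \cite{MR1642530} uses the $C^{\ast}$-algebraic structure of the domain of the conditional expectation and does not transfer).

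This is also why the hypothesis quantifies over \emph{all} faithful representations, a feature your argument never uses. The paper's proof applies the hypothesis to the inflation $\op{B}\subset\op{B}(\HH\otimes\ell^2)$, which yields self-duality of $V\overline{\otimes}\op{B}(\ell^2)$ over $\op{B}'\overline{\otimes}\op{B}(\ell^2)$ and hence, by the same open-mapping argument applied at every matrix level simultaneously, the completely bounded inequality $\|\sum_{i,j}a_i^{\ast}v_i^{\ast}v_ja_j\|\leqslant K\|\sum_{i,j}a_i^{\ast}\psi^{-1}(v_i^{\ast}v_j)a_j\|$ with a uniform constant. From there the construction of $\eta$, of $F(T)=\langle\eta,T\cdot\eta\rangle$, and the bound $\|\sum_{k=1}^{n}X_kX_k^{\ast}\|\leqslant K$ on partial sums go through as you sketch. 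To repair your proof, replace ``self-dual in some faithful representation'' by an application of the hypothesis to the inflated representation and derive the matrix-level inequality before attempting to construct $\eta$. (Your side claim that $D\in\op{B}$ because off-diagonal terms ``cancel after symmetrising'' is also not an argument, though that point is minor: once $D$ is identified with $(\op{Id}\otimes\psi^{\op{op}})(P)=A\mathds{1}$ it lies in $\op{B}$ for free.)
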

\begin{proof}
We proved one implication in the previous proposition. Once again we work in a greater generality. Namely we assume that we have a faithful normal conditional expectation $\mathbb{E}: \mathsf{M} \to \mathsf{N}$ and $V \subset \mathsf{M}$ is a weak$^{\ast}$ closed self-dual $\mathsf{N}$-module. Taking advantage of the independence of the representation, we assume moreover that $V\overline{\otimes} \op{B}(\ell^{2})$ (weak$^{\ast}$ closure inside $\mathsf{M}\overline{\otimes} \op{B}(\ell^2)$) is a self-dual module over $\mathsf{N}\overline{\otimes} \op{B}(\ell^2)$. In the original formulation the module $V\overline{\otimes} \op{B}(\ell^{2})$ will correspond to the inflation of the representation of $\op{B}$, i.e. taking $\op{B} \subset \op{B}(\HH\otimes \ell^{2})$ instead of $\op{B} \subset \op{B}(\HH)$, hence will be self-dual by the assumption of the proposition.

Note that $V$  has two natural operator space structures, one inherited from $\mathsf{M}$, and the other one coming from the structure of an $\mathsf{N}$-module. The assumption that $V$ is self-dual, hence in particular complete, implies that the two Banach space structures are isomorphic, as the operator norm clearly dominates the Hilbert module norm, hence one can invoke the open mapping theorem. Our assumption that $V\overline{\otimes} \op{B}(\ell^{2})$ is also self-dual implies that even the operator space structures are isomorphic.

Let $(v_{1},\dots, v_{n})$ be an $n$-tuple in $V$ and view it as the first row of a matrix in $M_n(V)$. The square of the norm of this matrix, when we use the Hilbert module operator space structure, is equal to the norm of the matrix $(\mathbb{E}(v_{i}^{\ast} v_{j}))$ in $M_{n}(\mathsf{N})$, which is clearly the same as the norm in $M_{n}(\mathsf{M})$. This can be computed using the following formula (for positive matrices):
\[
\|(m_{ij})\|_{M_{n}(\mathsf{M})} = \sup\{\|\sum a_{i}^{\ast} m_{ij} a_{j}\|_{\mathsf{M}}: a_{i} \in \mathsf{M}, \|\sum a_{i}^{\ast} a_{i}\|_{\mathsf{M}}\leqslant 1\}.
\]
This formula is proved using the fact that the $C^{\ast}$-norm on $M_{n}(\mathsf{M})$ is the same as the operator norm, where $M_{n}(\mathsf{M})$ acts on the Hilbert $\mathsf{M}$-module $\mathsf{M}^{n}$. It follows from the equivalence of the operator space structures that for some $K>0$ we have the inequality
\[
\|\sum_{i,j} a_{i}^{\ast} v_{i}^{\ast} v_{j} a_{j}\| \leqslant K \|\sum_{i,j} a_{i}^{\ast} \mathbb{E}(v_{i}^{\ast} v_{j}) a_{j}\|.
\]
Once again we will follow the proof of \cite[Th\'{e}or\`{e}me 3.5]{MR945550}. The estimate above shows that we can define a bounded map $\tau: V \otimes_{\mathsf{N}} \mathsf{M} \to \mathsf{M}$ given by $\sum_{i} v_{i} \otimes m_{i}\mapsto \sum_{i} v_{i} m_{i}$. Since it is right $\mathsf{M}$-linear and $V\otimes_{\mathsf{N}} \mathsf{M}$ is self-dual, $\tau$ is given by an inner product with a vector $\eta \in V \otimes_{\mathsf{N}} \mathsf{M}$. Using $\eta$, we can define a normal, completely positive map $F: \op{B}_{\mathsf{N}}(V) \to \mathsf{M}$ via $F(T) := \langle \eta, T\cdot \eta\rangle$. Let now $(X_{k})_{k}$ be an orthonormal basis of $V$ and define $V_{n}$ to be the self-dual submodule of $V$ generated by $X_{1},\dots, X_{n}$. The corresponding map $\tau_{n}$ is clearly represented by the vector $\eta_{n}:= \sum_{k=1}^{n} X_{k} \otimes X_{k}^{\ast}$. Therefore one can compute the value of $F_{n}$ on a rank one operator $|v\rangle \langle w| \in \op{B}_{\mathsf{N}}(V_{n})$ as 
\[
F_{n}(|v\rangle\langle w|) = \sum_{j,k=1}^{n} X_{j} \langle X_{j}, w\rangle \langle w, X_{k}\rangle X_{k}^{\ast} = vw^{\ast}. 
\]
As $\op{Id}_{V_{n}} = \sum_{k=1}^{n} |X_{k}\rangle\langle X_{k}|$, we get $F_{n}(\op{Id}) = \sum_{k=1}^{n} X_{k}X_{k}^{\ast}$, therefore $\|\sum_{k=1}^{n} X_{k} X_{k}^{\ast}\|\leqslant K$. It follows that the partial sums $\sum_{k=1}^{n} X_{k} X_{k}^{\ast}$ form an increasing and bounded sequence, hence the series $\sum_{k=1}^{\infty} X_{k} X_{k}^{\ast}$ converges strongly to an operator $D$ with norm at most $K$, which is precisely the bounded degree condition.
\end{proof}
\begin{rem}
It is not clear whether one can simply assume that one of the bimodules defining the quantum graph is self-dual, i.e. whether the equivalence of the operator space structures follows automatically from the Banach space equivalence. For the index of a conditional expectation this is exactly the gap between \cite[Proposition 3.3]{MR945550} and \cite[Th\'{e}or\`{e}me 3.5]{MR945550}. However, this problem has been resolved in \cite{MR1642530}, where the authors proved that if for a conditional expectation $\mathbb{E}:\op{A} \to \op{B}$  there exists a constant $K>0$ such that the map $K \mathbb{E} - \op{Id}$ is positive then there also exists a constant $L>0$ such that $L\mathbb{E} - \op{Id}$ is \emph{completely} positive. Unfortunately the proof relies heavily on the $C^{\ast}$-algebraic structure and it is unclear whether an analogous result holds in our situation.
\end{rem}

\subsection{Explicit correspondence between the three approaches}\label{Subsec:Explicit}
From now on we assume that $\op{B} \simeq \ell^{\infty}-\bigoplus_{\alpha} M_{n_{\alpha}}$ is represented on $\HH:= \bigoplus_{\alpha} \C^{n_{\alpha}}$. In this case the commutant of $\op{B}$ is equal to the center of $\op{B}$. Therefore each weak$^{\ast}$ closed $\op{B}'$-bimodule is exactly a collection of subspaces $V_{\alpha\beta} \subset \op{B}(\C^{n_{\alpha}}, \C^{n_{\beta}})$. The conditional expectation $\psi^{-1}: \op{B}(\HH) \to \op{B}'$ is given by $\psi^{-1}(x) = \sum_{\alpha} \frac{\op{Tr}(\rho_{\alpha}^{-1} \mathds{1}_{\alpha} x \mathds{1}_{\alpha})}{\op{Tr}(\rho_{\alpha}^{-1})} \mathds{1}_{\alpha}$, where $\mathbb{1}_{\alpha} \in \op{B}^{\prime}$ is the minimal central projection corresponding to the summand $M_{n_{\alpha}}$. For each pair $(\alpha, \beta)$ let $\left(X_{i}^{\alpha\beta}\right)_{i=1}^{n_{\alpha\beta}}$ be an orthonormal basis of $V_{\alpha\beta}$ with respect to the KMS inner product on $\op{B}(\HH)$ (where we use the density $\bigoplus_{\alpha} \frac{1}{\op{Tr}(\rho_{\alpha}^{-1})} \rho_{\alpha}^{-1}$ to define a weight on $\op{B}(\HH)$), i.e. $\frac{1}{\sqrt{\op{Tr}(\rho_{\alpha}^{-1})\op{Tr}(\rho_{\beta}^{-1})}} \op{Tr}((X_{i}^{\alpha\beta})^{\ast} \rho_{\beta}^{-\frac{1}{2}} X_{j}^{\alpha\beta} \rho_{\alpha}^{-\frac{1}{2}}) = \delta_{ij}$. Orthogonal projection from $\op{B}(\C^{n_{\alpha}}, \C^{n_{\beta}})$ onto $V_{\alpha\beta}$ will be given by 
\begin{equation}\label{Eq:Projectionbimod}
P_{V_{\alpha\beta}}(x) = \frac{1}{\sqrt{\op{Tr}(\rho_{\alpha}^{-1})\op{Tr}(\rho_{\beta}^{-1})}} \sum_{i} X_{i}^{\alpha\beta}  \op{Tr}((X_{i}^{\alpha\beta})^{\ast} \rho_{\beta}^{-\frac{1}{2}} x \rho_{\alpha}^{-\frac{1}{2}}).
\end{equation}
We will use the notation $e_{kl}^{\alpha\beta}$ to denote the rank one operator $|e_{k}^{\alpha}\rangle\langle e_{l}^{\beta}|$ to rewrite it further as
\[
\frac{1}{\sqrt{\op{Tr}(\rho_{\alpha}^{-1})\op{Tr}(\rho_{\beta}^{-1})}}\sum_{i,k,l} X_{i}^{\alpha\beta} e_{kl}^{\alpha\beta} \rho_{\beta}^{-\frac{1}{2}} x \rho_{\alpha}^{-\frac{1}{2}} (X_{i}^{\alpha\beta})^{\ast} e_{lk}^{\beta\alpha}. 
\]
Recall from Proposition \ref{Prop:Bimodproj} (and the discussion preceding it) that to find the representing element in $M_{n_{\beta}} \otimes M_{n_{\alpha}}^{\op{op}}$ we need to twist by the modular group. 
\begin{prop}
Let  $\left(X_{i}^{\alpha\beta}\right)_{i=1}^{n_{\alpha\beta}}$ be an orthonormal basis of  $V_{\alpha\beta} \subset \op{B}(\C^{n_{\alpha}}, \C^{n_{\beta}})$ with respect to the KMS inner product induced by $\psi^{-1}$. Then the corresponding projection $P_{\beta\alpha} \in M_{n_{\beta}} \otimes M_{n_{\alpha}}^{\op{op}} $ is given by  
\begin{equation}
\frac{1}{\sqrt{\op{Tr}(\rho_{\alpha}^{-1})\op{Tr}(\rho_{\beta}^{-1})}}\sum_{i,k,l} \rho_{\beta}^{-\frac{1}{4}} X_{i}^{\alpha\beta} e_{kl}^{\alpha\beta} \rho_{\beta}^{-\frac{1}{4}} \otimes \left(\rho_{\alpha}^{-\frac{1}{4}} (X_{i}^{\alpha\beta})^{\ast} e_{lk}^{\beta\alpha} \rho_{\alpha}^{-\frac{1}{4}}\right)^{\op{op}}.
\end{equation}
In particular the flip $\sigma(P_{\beta\alpha}) \in M_{n_{\alpha}} \otimes M_{n_{\beta}}^{\op{op}}$ gives the orthogonal projection onto $V_{\alpha\beta}^{\ast} \subset \op{B}(\C^{n_{\beta}}, \C^{n_{\alpha}})$.
\end{prop}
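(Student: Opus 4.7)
The plan is to start with the orthogonal projection formula already derived in equation \eqref{Eq:Projectionbimod} and rewrite it so that it visibly exhibits the form $x \mapsto \sum_{k} \tilde{b}_{k} x \tilde{c}_{k}$, after which Proposition \ref{Prop:Bimodproj} together with the KMS-twisting convention (explained in the paragraph preceding it) identifies the representing element in $M_{n_{\beta}} \otimes M_{n_{\alpha}}^{\op{op}}$.

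More concretely, the first step is to carry out the trace-absorption identity
\[
Y\cdot \op{Tr}(M) \;=\; \sum_{k,l} Y\, e_{kl}^{\alpha\beta}\, M\, e_{lk}^{\beta\alpha}
\]
which is exactly how the paper arrived at the expression immediately after \eqref{Eq:Projectionbimod}. Applied to \eqref{Eq:Projectionbimod} with $Y = X_{i}^{\alpha\beta}$ and $M = \rho_{\beta}^{-\frac{1}{2}} x \rho_{\alpha}^{-\frac{1}{2}} (X_{i}^{\alpha\beta})^{\ast}$ (and using cyclicity of the trace to move $(X_{i}^{\alpha\beta})^{\ast}$ across), it yields
\[
P_{V_{\alpha\beta}}(x) \;=\; \frac{1}{\sqrt{\op{Tr}(\rho_{\alpha}^{-1})\op{Tr}(\rho_{\beta}^{-1})}}\sum_{i,k,l} X_{i}^{\alpha\beta} e_{kl}^{\alpha\beta} \bigl(\rho_{\beta}^{-\frac{1}{2}} x \rho_{\alpha}^{-\frac{1}{2}}\bigr) (X_{i}^{\alpha\beta})^{\ast} e_{lk}^{\beta\alpha}.
\]

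The second step is to match this against the KMS action $x\mapsto \sigma_{\frac{i}{4}}^{\psi^{-1}}(b)\, x\, \sigma_{-\frac{i}{4}}^{\psi^{-1}}(c)$ of $b \otimes c^{\op{op}}$ on $\op{B}(\HH)$ described just before Proposition \ref{Prop:Bimodproj}. Since $\sigma_{t}^{\psi^{-1}} = \sigma_{-t}^{\psi}$ on $\op{B}$ and $\sigma_{z}^{\psi}(y) = \rho^{iz} y \rho^{-iz}$ on each block, we get $\sigma_{\frac{i}{4}}^{\psi^{-1}}(y) = \rho_{\beta}^{\frac{1}{4}} y \rho_{\beta}^{-\frac{1}{4}}$ for $y\in M_{n_{\beta}}$ and $\sigma_{-\frac{i}{4}}^{\psi^{-1}}(y) = \rho_{\alpha}^{-\frac{1}{4}} y \rho_{\alpha}^{\frac{1}{4}}$ for $y\in M_{n_{\alpha}}$. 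Plugging $b_{i,k,l} := \rho_{\beta}^{-\frac{1}{4}} X_{i}^{\alpha\beta} e_{kl}^{\alpha\beta} \rho_{\beta}^{-\frac{1}{4}}$ and $c_{i,k,l} := \rho_{\alpha}^{-\frac{1}{4}} (X_{i}^{\alpha\beta})^{\ast} e_{lk}^{\beta\alpha} \rho_{\alpha}^{-\frac{1}{4}}$ into this KMS action, the outer $\rho_{\beta}^{\pm \frac{1}{4}}$ and $\rho_{\alpha}^{\pm \frac{1}{4}}$ telescope, producing precisely the summand $X_{i}^{\alpha\beta} e_{kl}^{\alpha\beta} \rho_{\beta}^{-\frac{1}{2}} x \rho_{\alpha}^{-\frac{1}{2}} (X_{i}^{\alpha\beta})^{\ast} e_{lk}^{\beta\alpha}$ above. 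Since Proposition \ref{Prop:Bimodproj} asserts that the assignment $\op{B}\otimes \op{B}^{\op{op}} \to \phantom{}_{\op{B}'}\op{B}_{\op{B}'}(\HH)$ is a bijection, this identifies the element producing the orthogonal projection onto $V_{\alpha\beta}$ with the displayed $P_{\beta\alpha}$.

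For the flip statement, the key observation is that $\bigl((X_{i}^{\alpha\beta})^{\ast}\bigr)_{i}$ is an orthonormal basis of $V_{\alpha\beta}^{\ast}\subset \op{B}(\C^{n_{\beta}}, \C^{n_{\alpha}})$ with respect to the corresponding KMS inner product: this follows from $\langle y^{\ast}, x^{\ast}\rangle_{\op{KMS}} = \langle x, y\rangle_{\op{KMS}}$ (noted in the preliminaries as a defining feature of the KMS inner product) applied to the formula of the proposition with $\alpha$ and $\beta$ interchanged. Substituting this ONB into the just-proved formula for $V_{\alpha\beta}^{\ast}$ yields
\[
P(V_{\alpha\beta}^{\ast}) = \frac{1}{\sqrt{\op{Tr}(\rho_{\alpha}^{-1})\op{Tr}(\rho_{\beta}^{-1})}}\sum_{i,k,l} \rho_{\alpha}^{-\frac{1}{4}} (X_{i}^{\alpha\beta})^{\ast} e_{kl}^{\beta\alpha} \rho_{\alpha}^{-\frac{1}{4}} \otimes \bigl(\rho_{\beta}^{-\frac{1}{4}} X_{i}^{\alpha\beta} e_{lk}^{\alpha\beta} \rho_{\beta}^{-\frac{1}{4}}\bigr)^{\op{op}},
\]
which coincides with $\sigma(P_{\beta\alpha})$ after renaming the dummy indices $k \leftrightarrow l$. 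Nothing subtle is going on mathematically; the main care needed is only bookkeeping of indices and of the direction of composition in the rectangular setting, together with the consistent use of $\sigma_{t}^{\psi^{-1}} = \sigma_{-t}^{\psi}$ to unwind the modular twists.
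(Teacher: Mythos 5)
Your proposal is correct and takes essentially the same route as the paper: the trace-absorption rewriting of \eqref{Eq:Projectionbimod} is exactly the displayed formula the paper derives just before the proposition, and identifying $(X_i^{\alpha\beta})^{\ast}$ as a KMS-orthonormal basis of $V_{\alpha\beta}^{\ast}$ via $\langle x,y\rangle_{\op{KMS}}=\langle y^{\ast},x^{\ast}\rangle_{\op{KMS}}$ is precisely the paper's proof of the flip statement. The only difference is that you spell out the modular twist $\sigma_{\pm\frac{i}{4}}^{\psi^{-1}}=\sigma_{\mp\frac{i}{4}}^{\psi}$ explicitly, where the paper merely points back to Proposition \ref{Prop:Bimodproj}; your sign bookkeeping there is correct.
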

\begin{proof}
The flip $\sigma(P_{\beta\alpha})$ is the projection corresponding to the orthonormal set $\left((X_{i}^{\alpha\beta})^{\ast}\right)_{i=1}^{n_{\alpha\beta}}$, which is an orthonormal basis of $V_{\alpha\beta}^{\ast}$, because of the property $\langle X,Y\rangle_{\op{KMS}} = \langle Y^{\ast}, X^{\ast}\rangle_{\op{KMS}}$ of the KMS inner product.
\end{proof}

To find the explicit formula for the associated adjacency matrix, we will rewrite the formula \eqref{Eq:Projectionbimod} in a different manner
\[
P_{V_{\alpha\beta}}(x) = \frac{1}{\sqrt{\op{Tr}(\rho_{\alpha}^{-1})\op{Tr}(\rho_{\beta}^{-1})}} \sum_{i,k,l} X_{i}^{\alpha\beta} e_{kl}^{\alpha} (X_{i}^{\alpha\beta})^{\ast} \rho_{\beta}^{-\frac{1}{2}} x \rho_{\alpha}^{-\frac{1}{2}} e_{lk}^{\alpha}.
\]
Therefore the corresponding projection in $\op{B}\overline{\otimes} \op{B}^{\op{op}}$ will be given by
\[
P_{\beta\alpha} = \frac{1}{\sqrt{\op{Tr}(\rho_{\alpha}^{-1})\op{Tr}(\rho_{\beta}^{-1})}} \sum_{i,k,l} \rho_{\beta}^{-\frac{1}{4}} X_{i}^{\alpha\beta} e_{kl}^{\alpha} (X_{i}^{\alpha\beta})^{\ast} \rho_{\beta}^{-\frac{1}{4}} \otimes \left(\rho_{\alpha}^{-\frac{1}{4}} e_{lk}^{\alpha}\rho_{\alpha}^{-\frac{1}{4}}\right)^{\op{op}}. 
\]
If we compare it with \eqref{Eq:ProjfromAdj}, we immediately obtain the following statement.
\begin{prop}\label{Prop:Adjacencykraus}
Let  $\left(X_{i}^{\alpha\beta}\right)_{i=1}^{n_{\alpha\beta}}$ be an orthonormal basis of  $V_{\alpha\beta} \subset \op{B}(\C^{n_{\alpha}}, \C^{n_{\beta}})$ with respect to the KMS inner product induced by $\psi^{-1}$. Then the corresponding quantum adjacency matrix $A: c_{00}(\op{B}) \to \op{B}$ is given by
\begin{equation}\label{Eq:Adjacencykraus}
A(e_{kl}^{\alpha}) = \sum_{\beta} \sqrt{\frac{\op{Tr}(\rho_{\alpha}^{-1})}{\op{Tr}(\rho_{\beta}^{-1})}}\sum_{i} \rho_{\beta}^{-\frac{1}{4}} X_{i}^{\alpha\beta} \rho_{\alpha}^{\frac{1}{4}} e_{kl}^{\alpha} \rho_{\alpha}^{\frac{1}{4}} (X_{i}^{\alpha\beta})^{\ast} \rho_{\beta}^{-\frac{1}{4}}.
\end{equation} 
\end{prop}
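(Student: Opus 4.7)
The plan is to extract the formula for $A$ by comparing two independent expressions for the same projection $P \in \op{B}\overline{\otimes} \op{B}^{\op{op}}$ and then applying linearity. The first expression is the bimodule formula for $P_{\beta\alpha}$ displayed immediately before the proposition, and the second is the generalized Choi formula \eqref{Eq:ProjfromAdj}. Both describe the same object, so they must match block by block.

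First I would fix $\alpha$ and sum the displayed expression for $P_{\beta\alpha}$ over $\beta$; this gives the entire ``$\alpha$-column'' of $P$. On the adjacency-matrix side, the same $\alpha$-column is, by \eqref{Eq:ProjfromAdj}, equal to
\[
\frac{1}{\op{Tr}(\rho_{\alpha}^{-1})} \sum_{p,q} A\!\left(\rho_{\alpha}^{-\frac{1}{4}} e_{pq}^{\alpha} \rho_{\alpha}^{-\frac{1}{4}}\right) \otimes \left(\rho_{\alpha}^{-\frac{1}{4}} e_{qp}^{\alpha} \rho_{\alpha}^{-\frac{1}{4}}\right)^{\op{op}}.
\]
Since $\bigl\{\bigl(\rho_{\alpha}^{-\frac{1}{4}} e_{qp}^{\alpha} \rho_{\alpha}^{-\frac{1}{4}}\bigr)^{\op{op}}\bigr\}_{p,q}$ is a basis of $M_{n_{\alpha}}^{\op{op}}$, the second tensor legs determine the pairing uniquely, so equating coefficients of $\bigl(\rho_{\alpha}^{-\frac{1}{4}} e_{lk}^{\alpha} \rho_{\alpha}^{-\frac{1}{4}}\bigr)^{\op{op}}$ yields
\[
A\!\left(\rho_{\alpha}^{-\frac{1}{4}} e_{kl}^{\alpha} \rho_{\alpha}^{-\frac{1}{4}}\right) = \sum_{\beta} \sqrt{\frac{\op{Tr}(\rho_{\alpha}^{-1})}{\op{Tr}(\rho_{\beta}^{-1})}} \sum_{i} \rho_{\beta}^{-\frac{1}{4}} X_{i}^{\alpha\beta} e_{kl}^{\alpha} (X_{i}^{\alpha\beta})^{\ast} \rho_{\beta}^{-\frac{1}{4}}.
\]

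Finally, since the elements $\rho_{\alpha}^{-\frac{1}{4}} e_{kl}^{\alpha} \rho_{\alpha}^{-\frac{1}{4}}$ span the $\alpha$-block of $\op{B}$ (they are the KMS-orthonormal basis up to normalisation), the above identity extends by linearity to the formula $A\!\left(\rho_{\alpha}^{-\frac{1}{4}} y \rho_{\alpha}^{-\frac{1}{4}}\right) = \sum_{\beta}\sqrt{\op{Tr}(\rho_{\alpha}^{-1})/\op{Tr}(\rho_{\beta}^{-1})}\sum_{i} \rho_{\beta}^{-\frac{1}{4}} X_{i}^{\alpha\beta} y (X_{i}^{\alpha\beta})^{\ast} \rho_{\beta}^{-\frac{1}{4}}$ for every $y\in M_{n_{\alpha}}$. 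Taking $y = \rho_{\alpha}^{\frac{1}{4}} e_{kl}^{\alpha} \rho_{\alpha}^{\frac{1}{4}}$ makes the left-hand side equal to $A(e_{kl}^{\alpha})$ and produces exactly the claimed Kraus-type expression \eqref{Eq:Adjacencykraus}.

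I do not expect any real obstacle here; the argument is a direct matching-of-coefficients computation once the two formulas for $P$ are placed side by side. The only thing requiring care is bookkeeping of the $\rho^{\pm\frac{1}{4}}$ factors and of the index conventions (in particular that the $(k,l)$ in the bimodule formula corresponds to the transposed index $(l,k)$ on the second leg in the Choi-type formula), together with the observation that the linear substitution $y \mapsto \rho_\alpha^{\frac{1}{4}} e_{kl}^\alpha \rho_\alpha^{\frac{1}{4}}$ is legitimate because both sides are linear in $y$ over the finite-dimensional block $M_{n_\alpha}$.
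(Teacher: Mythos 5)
Your proposal is correct and is essentially the paper's own argument: the paper derives the proposition by rewriting the bimodule projection formula so that its blocks have the form $\sum_{i,k,l}\rho_{\beta}^{-\frac{1}{4}} X_{i}^{\alpha\beta} e_{kl}^{\alpha} (X_{i}^{\alpha\beta})^{\ast} \rho_{\beta}^{-\frac{1}{4}} \otimes \bigl(\rho_{\alpha}^{-\frac{1}{4}} e_{lk}^{\alpha}\rho_{\alpha}^{-\frac{1}{4}}\bigr)^{\op{op}}$ and then comparing with \eqref{Eq:ProjfromAdj}, which is exactly your coefficient-matching on the second tensor leg followed by the linear substitution $y\mapsto\rho_{\alpha}^{\frac{1}{4}} e_{kl}^{\alpha}\rho_{\alpha}^{\frac{1}{4}}$. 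Your bookkeeping of the $\rho^{\pm\frac{1}{4}}$ factors and the transposed index on the second leg agrees with the paper's conventions.
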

\begin{rem}
We could in principle get rid of the scalars $\op{Tr}(\rho_{\alpha}^{-1})$ just by normalizing our densities so that these numbers are equal to $1$. However, in the context of quantum groups it is more common to choose the normalization $\op{Tr}(\rho_{\alpha}^{-1}) = \op{Tr}(\rho_{\alpha})$, so we decided to include a more general discussion.
\end{rem}

\section{Covariant quantum adjacency matrices}\label{Sec:covqadj}
Let $\G$ be a compact quantum group and let $\bGamma$ be its discrete dual. We want to find quantum adjacency matrices on $\ell^{\infty}(\bGamma) \simeq \ell^{\infty}-\bigoplus_{\alpha \in \op{Irr}\G} M_{n_{\alpha}}$ that are covariant with respect to the right action of $\bGamma$ on itself. Such an operator $T$ commutes with all right convolution operators, so $T(x) = T(\delta_{\varepsilon} \ast x) = T(\delta_{\varepsilon})\ast x$, where $\delta_{\varepsilon}$ is the unit with respect to the convolution, namely the unit of the block corresponding to the trivial representation of $\G$. It follows that it is given by a left convolution\footnote{This sweeps under the rug some potential analytical difficulties but it only serves as a motivation here, so we feel that there is no need to delve deeper into this issue.} and we just need to figure out, which ones give rise to quantum adjacency matrices. Before that we need to gather some information about convolutions on quantum groups.
\subsection{Fourier transform on compact/discrete quantum groups}
Here we collect some conventions about the Fourier transform on compact and discrete quantum groups. We will always denote by $\G$ a compact quantum group and by $\bGamma$ its discrete dual.  

First, let $\varphi$ be a bounded functional on $L^{\infty}(\mathbb{G})$. We will define its Fourier transform to be an element $\widehat{\varphi} \in \ell^{\infty}(\bGamma)\simeq\ell^{\infty}-\bigoplus_{\alpha \in \op{Irr}(\mathbb{G})} \op{M}_{n_{\alpha}}$ such that
\[
\widehat{\varphi}(\alpha):= (\op{Id}\otimes \varphi)\left((u^{\alpha})^{\ast}\right).
\]
We define the convolution of two functionals as
\[
\varphi_1 \ast \varphi_2 := (\varphi_1 \otimes \varphi_2)\circ \Delta
\]
With this convention it is easy to check that $(\varphi_1 \ast \varphi_2)\hat{\phantom{i}}(\pi) =  \widehat{\varphi_2}(\pi) \widehat{\varphi_1}(\pi)$. 

Recall (see \cite[Sections 1.3 and 1.4]{MR3204665}) that for any $\alpha \in \op{Irr}(\mathbb{G})$ there is a unique positive matrix $\rho_{\alpha} \in M_{n_{\alpha}}$ such that $\op{Tr}(\rho_{\alpha}) = \op{Tr}(\rho_{\alpha}^{-1})=:\op{dim}_{q}(\alpha)$ and it implements an equivalence between the representation $\alpha$ and its double contragredient. These matrices can be used to describe the left and right Haar measures on the discrete dual $\bGamma$:
\begin{align*}
\widehat{h_{R}}(x) &= \sum_{\alpha} \op{Tr}(\rho_{\alpha}) \op{Tr}(\rho_{\alpha} x_{\alpha}) \\
\widehat{h_{L}}(x) &= \sum_{\alpha} \op{Tr}(\rho_{\alpha}) \op{Tr}(\rho_{\alpha}^{-1} x_{\alpha}).
\end{align*}
Both $\widehat{h_{R}}$ and $\widehat{h_{L}}$ satisfy $mm^{\ast}=\op{Id}$ by Proposition \ref{Prop:mstar}.

To define the Fourier transform and convolution of elements of $L^{\infty}(\mathbb{G})$ we will always associate to such an element $x \in L^{\infty}(\mathbb{G})$ a linear functional $h(\cdot x)$, where $h$ is the Haar state on $\G$. For further use, we will compute now the Fourier transform of an element $x:=\sum_{i,j=1}^{n_{\alpha}} x_{ij} u_{ij}^{\alpha}$ belonging to the span of matrix elements of a fixed irreducible representation $\alpha$.
\begin{lem}\label{Lem:FTmatrixelement}
Let $x:=\sum_{i,j=1}^{n_{\alpha}} x_{ij} u_{ij}^{\alpha}$. We have $\widehat{x}(\pi)= \delta_{\alpha\pi}\frac{1}{\dim_q(\alpha)} X^{\op{T}} \rho_{\alpha}^{-1}$, where $(X)_{ij} = x_{ij}$.
\end{lem}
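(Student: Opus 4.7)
The plan is to unfold the definition of the Fourier transform and invoke the Peter--Weyl/Schur orthogonality relations for the Haar state. First, specializing the formula $\widehat{\varphi}(\pi) := (\op{Id}\otimes \varphi)((u^{\pi})^{\ast})$ to the functional $\varphi := h(\cdot\, x)$ associated to $x \in L^{\infty}(\G)$, and using that $((u^{\pi})^{\ast})_{ij} = (u^{\pi}_{ji})^{\ast}$, one obtains the entry-wise expression
\[
\widehat{x}(\pi)_{ij} \;=\; h\bigl((u^{\pi}_{ji})^{\ast}\, x\bigr).
\]

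Next I would substitute $x = \sum_{k,l} x_{kl} u^{\alpha}_{kl}$ and appeal to the orthogonality relation
\[
h\bigl((u^{\beta}_{pq})^{\ast}\, u^{\alpha}_{kl}\bigr) \;=\; \delta_{\alpha\beta}\, \delta_{ql}\, \frac{(\rho_{\alpha}^{-1})_{kp}}{\dim_{q}(\alpha)},
\]
which is a standard consequence of the Peter--Weyl theorem under the normalization $\op{Tr}(\rho_{\alpha}) = \op{Tr}(\rho_{\alpha}^{-1}) = \dim_{q}(\alpha)$ recalled just above (cf.\ \cite{MR3204665}). Taking $\beta = \pi$, $p = j$, $q = i$ immediately shows that $\widehat{x}(\pi)$ vanishes unless $\pi = \alpha$, producing the factor $\delta_{\alpha\pi}$.

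Finally, for $\pi = \alpha$ the computation collapses to
\[
\widehat{x}(\alpha)_{ij} \;=\; \sum_{k,l} x_{kl}\, \delta_{il}\, \frac{(\rho_{\alpha}^{-1})_{kj}}{\dim_{q}(\alpha)} \;=\; \frac{1}{\dim_{q}(\alpha)} \sum_{k} x_{ki}\, (\rho_{\alpha}^{-1})_{kj} \;=\; \frac{1}{\dim_{q}(\alpha)}\bigl(X^{\op{T}} \rho_{\alpha}^{-1}\bigr)_{ij},
\]
which is exactly the stated formula. There is no real obstacle here; the only point demanding care is the index bookkeeping, in particular that the $^{\ast}$ on the matrix $u^{\pi}$ swaps its two indices, which is precisely what produces the transpose on $X$ in the final answer.
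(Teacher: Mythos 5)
Your proof is correct and follows essentially the same route as the paper's: unfold the definition $\widehat{x}(\pi)=(\op{Id}\otimes h(\cdot\,x))((u^{\pi})^{\ast})$ and apply the Schur orthogonality relations, with your orthogonality formula and index bookkeeping matching the paper's exactly (the paper just writes the sum over matrix units $e_{kl}$ instead of entrywise).
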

\begin{proof}
We have $\widehat{x}(\pi) = \sum_{i,j,k,l} x_{ij} h( \left(u_{lk}^{\pi}\right)^{\ast} u_{ij}^{\alpha}) e_{kl}$. Using the orthogonality relations $h(\left(u_{lk}^{\pi}\right)^{\ast} u_{ij}^{\alpha}) = \delta_{\alpha \pi}\frac{1}{\dim_q(\alpha)}\delta_{jk} \left(\rho_{\alpha}^{-1}\right)_{il}$, we easily find that 
\[
\widehat{x}(\pi) = \frac{\delta_{\alpha\pi}}{\dim_q(\alpha)} \sum_{i,j,l} x_{ij} \left(\rho_{\alpha}^{-1}\right)_{il} e_{jl} = \frac{\delta_{\alpha\pi}}{\dim_q(\alpha)} X^{\op{T}}\rho_{\alpha}^{-1}.
\]
\end{proof}
In the special case of a single matrix element we get the formula $(u_{ij}^{\alpha})\hat{\phantom{a}}(\pi) = \frac{\delta_{\pi \alpha}}{\dim_q(\alpha)} e_{ji}^{\alpha} \rho_{\alpha}^{-1}$. Given Proposition \ref{Prop:mstar}, it suggests that we will be working with the Haar measure $\widehat{h_{R}}$ on $\bGamma$.

\begin{lem}
The Fourier transform extends to a unitary between $L^{2}(\mathbb{G})$ and $\ell^{2}(\bGamma)$, where $\ell^{2}(\bGamma)$ is defined using the right Haar measure on $\bGamma$.
\end{lem}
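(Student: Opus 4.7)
The plan is to verify that the Fourier transform preserves inner products on the dense subspace of matrix coefficients, and then invoke Peter-Weyl together with a density argument on the dual side. Concretely, I would proceed in four steps.

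First, I would fix the two Hilbert space structures. On the compact side, $L^2(\mathbb{G})$ carries the GNS inner product $\langle x,y\rangle := h(x^* y)$, and the matrix coefficients $u_{ij}^\alpha$ form a complete system by the Peter--Weyl theorem, with orthogonality relations
\[
h\bigl((u_{lk}^{\pi})^{*} u_{ij}^{\alpha}\bigr) = \delta_{\alpha\pi}\,\delta_{jk}\,\frac{(\rho_{\alpha}^{-1})_{il}}{\dim_q(\alpha)}
\]
already recorded in Lemma~\ref{Lem:FTmatrixelement}. On the dual side, $\ell^{2}(\bGamma)$ is built from $\widehat{h_R}(x) = \sum_{\alpha}\op{Tr}(\rho_\alpha)\op{Tr}(\rho_\alpha x_\alpha)$, and the inner product is $\langle \hat x, \hat y\rangle = \widehat{h_R}(\hat x^{*} \hat y)$.

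Second, I would use the formula $\widehat{u_{ij}^{\alpha}}(\pi) = \frac{\delta_{\pi\alpha}}{\dim_q(\alpha)}\, e_{ji}^{\alpha}\rho_{\alpha}^{-1}$ from Lemma~\ref{Lem:FTmatrixelement} to compute the inner product of two Fourier transforms. Since the supports of $\widehat{u_{ij}^{\alpha}}$ and $\widehat{u_{kl}^{\beta}}$ as elements of the direct sum $\ell^\infty(\bGamma)$ only meet when $\alpha=\beta$, the product $\widehat{u_{ij}^{\alpha}}^{*}\,\widehat{u_{kl}^{\beta}}$ reduces on the $\alpha$-block to $\frac{\delta_{jl}}{\dim_q(\alpha)^2}\rho_\alpha^{-1}e_{ik}^\alpha \rho_\alpha^{-1}$, using $e_{ij}^\alpha e_{lk}^\alpha = \delta_{jl}e_{ik}^\alpha$ after taking the adjoint. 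Applying $\widehat{h_R}$ then yields $\frac{\delta_{jl}(\rho_\alpha^{-1})_{ki}}{\dim_q(\alpha)}$, which is exactly the value of $\langle u_{ij}^\alpha, u_{kl}^\alpha\rangle_{L^2(\mathbb{G})}$ given by the orthogonality relations. So the Fourier transform is isometric on $\op{span}\{u_{ij}^\alpha\}$.

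Third, I would extend by continuity. Peter--Weyl tells us $\op{span}\{u_{ij}^\alpha\}$ is dense in $L^2(\mathbb{G})$, so the map extends uniquely to an isometry $\mathcal{F}: L^2(\mathbb{G}) \to \ell^2(\bGamma)$. Finally, for surjectivity I would observe that the image of $\mathcal F$ contains every matrix unit $e_{ji}^\alpha \rho_\alpha^{-1}$ (up to the nonzero scalar $\dim_q(\alpha)^{-1}$), and since $\rho_\alpha$ is invertible these span all of $c_{00}(\bGamma)$, a dense subspace of $\ell^{2}(\bGamma)$. Thus $\mathcal F$ is a unitary.

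There is no real obstacle here --- the only thing to be careful about is matching the choice of right Haar weight $\widehat{h_R}$ (with density $\rho_\alpha$) to the factor $\rho_\alpha^{-1}$ appearing in $\widehat{u_{ij}^{\alpha}}$, so that the two $\rho_\alpha^{-1}$ factors collapse against $\rho_\alpha$ in $\widehat{h_R}$ and leave a single $\rho_\alpha^{-1}$, reproducing exactly the $(\rho_\alpha^{-1})_{ki}/\dim_q(\alpha)$ from Peter--Weyl. Had we used $\widehat{h_L}$ instead the two sides would not match, which is already hinted at in the paragraph following Lemma~\ref{Lem:FTmatrixelement}.
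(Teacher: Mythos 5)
Your proposal is correct and follows exactly the paper's argument: verify the isometry $\langle u_{ij}^{\alpha}, u_{kl}^{\pi}\rangle = \langle \widehat{u_{ij}^{\alpha}}, \widehat{u_{kl}^{\pi}}\rangle$ on matrix coefficients using the orthogonality relations and Lemma~\ref{Lem:FTmatrixelement}, extend by density of $\operatorname{Pol}(\mathbb{G})$ in $L^2(\mathbb{G})$, and conclude surjectivity from density of the image. The paper leaves the inner-product computation to the reader; you have carried it out explicitly and correctly, including the observation that the right Haar weight $\widehat{h_R}$ (with density $\rho_\alpha$) is precisely what makes the two sides agree.
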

\begin{proof}
Using orthogonality relations and the previous lemma one can check that $\langle u_{ij}^{\alpha}, u_{kl}^{\pi}\rangle = \langle \widehat{u_{ij}^{\alpha}}, \widehat{u_{kl}^{\pi}}\rangle$. Therefore the Fourier transform is isometric on $Pol(\mathbb{G})$, thus it extends to an isometry on $L^{2}(\mathbb{G})$, which is also surjective, as the image is clearly dense.
\end{proof}

We will also need the fact that the inverse Fourier transform maps convolution into the regular product. We will redefine the Fourier transform using the right multiplicative unitary $W$, defined via $W(a\otimes b) = \Delta(a)(1\otimes b)$ on $L^{\infty}(\G)\otimes L^{\infty}(\G)$ and extended to $L^{2}(\G)\otimes L^{2}(\G)$ by continuity; it corresponds to the right regular representation of $\G$ and implements the multiplication via $W(x\otimes \mathds{1})W^{\ast} = \Delta(x)$. It can be shown that $W \in \ell^{\infty}(\bGamma)\overline{\otimes} L^{\infty}(\G)$. It also satisfies the pentagonal equation $W_{23} W_{12} = W_{12} W_{13} W_{23}$, which amounts to coassociativity of the comultiplication. We can write it more explicitly as $W= \sum_{\alpha \in \op{Irr}(\G)} \sum_{i,j}^{n_{\alpha}} e_{ij}^{\alpha} \otimes u_{ij}^{\alpha}$, where $e_{ij}^{\alpha}$ acts on $L^{2}(\G)$ via right convolution by its inverse Fourier transform. Because of this formula, we can also compute the Fourier transform as $\widehat{\varphi}= (\op{Id} \otimes \varphi)(W^{\ast})$. We will now use the multiplicative unitary to define the Fourier transform on $\bGamma$.
\begin{defn} Let $\psi$ be a bounded functional on $\ell^{\infty}(\bGamma)$ then its Fourier transform will be defined by $\widehat{\psi}:= (\psi \otimes \op{Id})(W) \in L^{\infty}(\mathbb{G})$; the fact that we use the same notation for Fourier transforms going both ways should not cause any confusion. For any $x \in c_{00}(\bGamma)$ we can define a functional on $\ell^{\infty}(\bGamma)$ by $\widehat{h_{R}}(\cdot x)$ and define the Fourier transform of $x$ using this embedding. 
\end{defn}
\begin{lem}
We have $\widehat{e_{ji}^{\alpha} \rho_{\alpha}^{-1}} = \op{dim}_q(\alpha) u_{ij}^{\alpha}$ and if $\psi_{1},\psi_{2} \in \left(\ell^{\infty}(\bGamma)\right)^{\ast}$ then $\widehat{\psi_{1}\ast \psi_{2}} = \widehat{\psi_{2}} \widehat{\psi_{1}}$.
\end{lem}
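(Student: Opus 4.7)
The plan is to verify the two formulas directly from the definitions, with the first being a bookkeeping computation with matrix units and the Schur orthogonality of the Haar measure $\widehat{h_R}$, and the second an immediate consequence of the multiplicativity relation $(\Delta_{\bGamma}\otimes\op{Id})(W)=W_{23}W_{13}$ used to define $\Delta_{\bGamma}$.

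For the first identity, I would unfold the definition of the Fourier transform on $c_{00}(\bGamma)$: the element $x = e_{ji}^\alpha\rho_\alpha^{-1}$ is sent to the functional $\widehat{h_R}(\,\cdot\, x)\in (\ell^\infty(\bGamma))^\ast$, and then $\widehat{x}=(\widehat{h_R}(\,\cdot\, x)\otimes\op{Id})(W)$. Plugging in the expansion $W=\sum_{\beta}\sum_{k,l}e_{kl}^\beta\otimes u_{kl}^\beta$ reduces the claim to evaluating $\widehat{h_R}(e_{kl}^\beta e_{ji}^\alpha\rho_\alpha^{-1})$. Since matrix blocks are disjoint, this forces $\beta=\alpha$, and then $e_{kl}^\alpha e_{ji}^\alpha\rho_\alpha^{-1}=\delta_{lj}e_{ki}^\alpha\rho_\alpha^{-1}$. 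Applying $\widehat{h_R}$ and using $\op{Tr}(\rho_\alpha)=\dim_q(\alpha)$ yields $\delta_{lj}\delta_{ki}\dim_q(\alpha)$, so summing against $u_{kl}^\alpha$ collapses to $\dim_q(\alpha)u_{ij}^\alpha$ as required.

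For the convolution formula, I would simply write
\[
\widehat{\psi_1\ast\psi_2} = ((\psi_1\otimes\psi_2)\circ\Delta_{\bGamma}\otimes\op{Id})(W) = (\psi_1\otimes\psi_2\otimes\op{Id})\bigl((\Delta_{\bGamma}\otimes\op{Id})(W)\bigr),
\]
and then invoke the defining property $(\Delta_{\bGamma}\otimes\op{Id})(W)=W_{23}W_{13}$ from the preliminaries. Using the leg notation $W=\sum_i a_i\otimes b_i$ to expand the right-hand side as $\sum_{i,j}a_i\otimes a_j\otimes b_j b_i$ and slicing with $\psi_1\otimes\psi_2$ on the first two legs produces $\sum_{i,j}\psi_2(a_j)\psi_1(a_i)\,b_jb_i = \widehat{\psi_2}\,\widehat{\psi_1}$, which is the stated identity.

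There is essentially no obstacle here: both statements are formal consequences of the definitions and the orthogonality relations / pentagon-type identity already recorded. The only care needed is in the order of multiplication in the second identity, which matches the earlier convention $(\varphi_1\ast\varphi_2)\hat{\phantom{i}}(\pi)=\widehat{\varphi_2}(\pi)\widehat{\varphi_1}(\pi)$ for functionals on $L^\infty(\G)$, reflecting the fact that the Fourier transform is an anti-homomorphism from the convolution algebra to the pointwise product.
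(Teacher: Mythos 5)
Your proposal is correct and follows essentially the same route as the paper: the first identity is verified by expanding $W=\sum_{\beta,k,l}e_{kl}^{\beta}\otimes u_{kl}^{\beta}$ and evaluating $\widehat{h_{R}}(e_{kl}^{\alpha}e_{ji}^{\alpha}\rho_{\alpha}^{-1})=\delta_{lj}\delta_{ki}\op{dim}_{q}(\alpha)$, and the second by slicing the defining relation $(\Delta_{\bGamma}\otimes\op{Id})(W)=W_{23}W_{13}$ with $\psi_{1}\otimes\psi_{2}$, with the order reversal handled exactly as in the paper. No gaps.
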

\begin{proof}
We have
\[
\widehat{e_{ji}^{\alpha} \rho_{\alpha}^{-1}} = \sum_{\pi,k,l} \widehat{h_{R}}(e_{kl}^{\pi} e_{ji}^{\alpha}\rho_{\alpha}^{-1}) u_{kl}^{\alpha},
\]
so $\pi=\alpha$ and $l=j$. We can further rewrite our formula as
\[
\sum_{k}\op{Tr}(\rho_{\alpha}) \op{Tr}(\rho_{\alpha}e_{ki}^{\alpha}\rho_{\alpha}^{-1}) u_{kj}^{\alpha} = \op{Tr}(\rho_{\alpha}) u_{ij}^{\alpha} = \op{dim}_{q}(\alpha) u_{ij}^{\alpha}.
\]
In particular, the Fourier transform on $\bGamma$ is inverse to the Fourier transform on $\mathbb{G}$ (e.g. on the level of $L^{2}$-spaces). 

Let us check that the convolution is mapped into the usual product, using the pentagonal equation. To this end, let $\psi_{1}$ and $\psi_2$ be two functionals on $\ell^{\infty}(\bGamma)$. We have $\widehat{\psi_1 \ast \psi_2} = (\psi_1\ast \psi_2 \otimes \op{Id})(W)$. By definition of the convolution, it can be rewritten as 
\[
(\psi_1 \otimes \psi_2 \otimes \op{Id})(\Delta_{\bGamma} \otimes \op{Id})(W).
\]
The comultiplication in $\bGamma$ is defined in such a way that $(\Delta_{\bGamma}\otimes \op{Id})(W) = W_{23}W_{13}$. Therefore we have to compute $(\psi_1 \otimes \psi_2 \otimes \op{Id})(W_{23} W_{13})$. It is clear that $\psi_2$ acts on the part $W_{23}$ and $\psi_1$ acts on $W_{13}$, hence we get $\widehat{\psi_2} \widehat{\psi_1}$ as the result.
\end{proof}
We will now investigate the behaviour of the Fourier transform under the antipode. Recall that on any locally compact quantum group the antipode $S$ admits a polar decomposition $S = \tau_{-\frac{i}{2}} R$, where $(\tau_{t})_{t\in\mathbb{R}}$ is the so-called scaling group (hence $\tau_{-\frac{i}{2}}$ is only defined on analytic elements) and $R$ is the unitary antipode, which is an involutive $\ast$-antiautomorphism; we also have $\widehat{h_{R}}\circ R = \widehat{h_{L}}$. 
\begin{lem}[{\cite[Proposition 6.8 and Proposition 7.9]{MR1832993}}]
Let $\bGamma$ be a locally compact quantum group. Let $(\sigma_{t})_{t\in\mathbb{R}}$ and $(\sigma^{'}_t)_{t\in \mathbb{R}}$ be the modular groups of the right and left Haar measures , and let $(\tau_{t})_{t\in\mathbb{R}}$ be the scaling group. Then the following commutation relations hold:
\begin{enumerate}[{\normalfont (i)}]
\item $\Delta \circ \sigma_t = (\sigma_t \otimes \tau_{-t})\circ \Delta$;
\item $\Delta \circ \sigma_t^{'} = (\tau_{t} \otimes \sigma_{t}^{'})\circ \Delta$;
\item $\Delta \circ \tau_t = (\tau_t \otimes \tau_t)\circ \Delta$;
\item $\Delta \circ \tau_t = (\sigma_t^{'}\otimes \sigma_{-t}) \circ \Delta$.
\end{enumerate}
If $\bGamma$ is discrete then $\sigma_{t}^{'} = \sigma_{-t}$, $(\sigma_t \otimes \sigma_t)\circ \Delta = \Delta \circ \sigma_t$, and $\tau_t = \sigma_{-t}$.
\end{lem}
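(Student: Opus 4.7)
The general commutation relations (i)--(iv) are part of the standard Kustermans--Vaes structure theory of locally compact quantum groups, so my plan would be to derive them from the multiplicative unitary $W$ together with its transformation laws under the scaling and modular groups, and then specialize to the discrete case at the end.

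The starting point is that the comultiplication is implemented by the multiplicative unitary via $\Delta(x) = W^{\ast}(1\otimes x)W$, so each identity will follow once one knows how $W$ transforms under the relevant one-parameter groups. The three key facts I would take as input are $(\tau_t \otimes \tau_t)(W) = W$, $(\sigma_t \otimes \tau_{-t})(W) = W$, and $(\tau_t \otimes \sigma'_t)(W) = W$; these express that the scaling group is the ``symmetric'' part of the antipode and that the KMS properties of $\widehat{h_R}$ and $\widehat{h_L}$ interlock with $\tau_t$ under the duality. Given these, (iii) is immediate by pushing $(\tau_t\otimes\tau_t)$ across $W$ in $\Delta(x) = W^{\ast}(1\otimes x)W$, (i) follows by the same manoeuvre using $(\sigma_t\otimes \tau_{-t})(W)=W$, and (ii) is its left-handed mirror image. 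Identity (iv) is then derived by combining (i), (ii), (iii): the composite $\sigma'_t \otimes \sigma_{-t}$ and $\tau_t\otimes\tau_t$ have the same commutation with $\Delta$ since $\sigma_t (\sigma'_{-t})$ and $\tau_t$ agree modulo the centre, and this is enough to give (iv).

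For the discrete specialization, I would exploit the fact that $\bGamma$ is dual to a compact quantum group $\mathbb{G}$ whose Haar measure is a finite state. On $\mathbb{G}$ the scaling group acts on matrix coefficients as $\tau_t(u_{ij}^{\alpha}) = \rho_{\alpha}^{it} u_{ij}^{\alpha}\rho_{\alpha}^{-it}$ while the modular group of $h$ acts as $\sigma_t^h(u_{ij}^{\alpha}) = \rho_{\alpha}^{it} u_{ij}^{\alpha}\rho_{\alpha}^{it}$; dualizing via the Fourier transform described just above in Lemma \ref{Lem:FTmatrixelement} flips one of the signs and identifies $\tau_t = \sigma_{-t}$ on $\ell^{\infty}(\bGamma)$. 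The relation $\sigma'_t = \sigma_{-t}$ on $\bGamma$ then follows from the explicit formulae for $\widehat{h_R}$ and $\widehat{h_L}$ recorded earlier: both weights use the same density $\rho_\alpha$ in opposite positions, so their modular automorphisms are mutually inverse. Substituting $\tau_{-t} = \sigma_t$ into (i) collapses it to $\Delta\circ\sigma_t = (\sigma_t\otimes \sigma_t)\circ\Delta$, which is the compatibility we will need in the sequel.

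The main obstacle, I expect, is not conceptual but book-keeping: the various sign conventions (left vs.\ right Haar weight, $\rho_\alpha$ vs.\ $\rho_\alpha^{-1}$, $\sigma_t$ vs.\ $\sigma_{-t}$) differ slightly between sources, and one must verify that the conventions fixed in Section \ref{Sec:Prelim} and those used in \cite{MR1832993} agree. Since the result is exactly Propositions 6.8 and 7.9 of Kustermans--Vaes once conventions are aligned, the cleanest route is to invoke it as a citation rather than redo the derivation, which is what the author presumably does.
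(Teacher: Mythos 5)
Your treatment of (i)--(iv) matches the paper's: both ultimately just cite Kustermans--Vaes, and the paper indeed offers no independent derivation of these, so your sketch via the multiplicative unitary is harmless extra material (though your derivation of (iv) from (i)--(iii) ``modulo the centre'' is too vague to count as a proof; luckily it is not needed). For the discrete specialization your route is genuinely different from the paper's, and the difference is worth noting. You first establish $\tau_t=\sigma_{-t}$ directly, using the Woronowicz-character formulas $\tau_t(u^{\alpha})=\rho_{\alpha}^{it}u^{\alpha}\rho_{\alpha}^{-it}$, $\sigma_t^{h}(u^{\alpha})=\rho_{\alpha}^{it}u^{\alpha}\rho_{\alpha}^{it}$ on the compact dual and then transport through the Fourier transform; only afterwards do you substitute into (i) to get $\Delta\circ\sigma_t=(\sigma_t\otimes\sigma_t)\circ\Delta$. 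The paper reverses the logical order: it first proves $\Delta\circ\sigma_t=(\sigma_t\otimes\sigma_t)\circ\Delta$ from the fact that the modular element $\delta=\bigoplus_{\alpha}\rho_{\alpha}^{2}$ is group-like, and then combines this with (iv), the already-established $\sigma'_t=\sigma_{-t}$, and injectivity of $\Delta$ to conclude $\Delta\circ\tau_t=\Delta\circ\sigma_{-t}$, hence $\tau_t=\sigma_{-t}$. (Both proofs get $\sigma'_t=\sigma_{-t}$ identically, from the explicit densities $\rho_{\alpha}$ and $\rho_{\alpha}^{-1}$ of the two Haar weights.) Your approach buys concreteness but imports more external input: the step ``dualizing flips one of the signs'' silently uses the relation $(\tau_t^{\bGamma}\otimes\tau_t^{\mathbb{G}})(W)=W$ between the two scaling groups, which you should state explicitly --- with it, a short computation with $W=\sum_{\alpha,i,j}e_{ij}^{\alpha}\otimes u_{ij}^{\alpha}$ does confirm $\tau_t^{\bGamma}(x)=\rho^{-it}x\rho^{it}=\sigma_{-t}(x)$. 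The paper's argument stays entirely inside the list (i)--(iv) plus one standard fact about $\delta$, which makes it shorter and less convention-sensitive.
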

\begin{proof}
We will only explain how to get the additional statements about discrete quantum groups. The fact that $\sigma_{t}' = \sigma_{-t}$ follows immediately from the explicit formulas for the Haar measures. The modular element of a quantum group is relating the left and right Haar measures and in the discrete case it is equal to $\delta:= \bigoplus_{\alpha \in \op{Irr}(\mathbb{G})} \rho_{\alpha}^2$. It always satisfies $\Delta(\delta) = \delta\circ \delta$, which implies that $\Delta \circ \sigma_t = (\sigma_t \otimes \sigma_t) \circ \Delta$. Now $\tau_{t} =\sigma_{-t}$ follows easily from the equalities $\Delta \circ \tau_{t} = (\sigma_{-t}\otimes \sigma_{-t})\circ \Delta$ and $(\sigma_{-t}\otimes \sigma_{-t})\circ \Delta = \Delta \circ \sigma_{-t}$, as they imply $\Delta\circ \tau_{t} = \Delta \circ \sigma_{-t}$ and $\Delta$ is injective.
\end{proof}
\begin{lem}[{\cite[Lemma 3.3]{MR2669427}}]
Let $\psi \in \ell^{\infty}(\bGamma)^{\ast}$ be such that $\psi^{\#}(x):= \psi^{\ast}(S^{-1}(x)) = \overline{\psi(S(x^{\ast}))}$ is also bounded. Then $(\widehat{\psi})^{\ast} = \widehat{\psi^{\#}}$.
\end{lem}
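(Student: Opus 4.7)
The plan is to unfold both sides against the multiplicative unitary $W \in \ell^{\infty}(\bGamma)\overline{\otimes} L^{\infty}(\G)$, move the adjoint inside using the identity $(\op{Id}\otimes S)(W) = W^{\ast}$ for the antipode $S$ of $\G$, and then match the result with $\widehat{\psi^{\#}}$ via the analogous identity for the dual antipode $\hat{S}$ of $\bGamma$.

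First, for any bounded functional $\psi$ on $\ell^{\infty}(\bGamma)$ and any element $X$ in a tensor product of $C^{\ast}$-algebras, one has $\bigl((\psi\otimes\op{Id})(X)\bigr)^{\ast} = (\psi^{\ast}\otimes\op{Id})(X^{\ast})$, with $\psi^{\ast}(a) := \overline{\psi(a^{\ast})}$. The identity $(\op{Id}\otimes S)(W) = W^{\ast}$ is immediate from the expansion $W = \sum_{\alpha,i,j} e^{\alpha}_{ij}\otimes u^{\alpha}_{ij}$ together with the formula $S(u^{\alpha}_{ij}) = (u^{\alpha}_{ji})^{\ast}$ valid for unitary corepresentations. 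Combining the two gives
\[
(\widehat\psi)^{\ast} = (\psi^{\ast}\otimes \op{Id})(W^{\ast}) = (\psi^{\ast}\otimes S)(W) = S(\widehat{\psi^{\ast}}).
\]

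Next, since $\psi^{\#} = \psi^{\ast}\circ \hat{S}^{-1}$, the Fourier transform unfolds as
\[
\widehat{\psi^{\#}} = (\psi^{\ast}\otimes \op{Id})\bigl((\hat{S}^{-1}\otimes \op{Id})(W)\bigr),
\]
so the lemma reduces to the ``antipode compatibility'' identity $(\hat{S}^{-1}\otimes \op{Id})(W) = (\op{Id}\otimes S)(W) = W^{\ast}$. This is a standard identity in the theory of locally compact quantum groups, the first-leg counterpart of $(\op{Id}\otimes S)(W) = W^{\ast}$, and follows from the characterisation of $\hat{S}$ by the multiplicative unitary: for a sufficiently regular $\omega \in L^{\infty}(\G)^{\ast}$ one has $\hat{S}^{-1}\bigl((\op{Id}\otimes \omega)(W)\bigr) = (\op{Id}\otimes \omega)(W^{\ast})$, so slicing the second leg with such $\omega$ shows that $(\hat{S}^{-1}\otimes \op{Id})(W)$ and $W^{\ast}$ agree on a separating family and therefore coincide as elements of the multiplier algebra.

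The main obstacle is the unboundedness of $\hat{S}$ on $\ell^{\infty}(\bGamma)$ in the non-Kac case: the manipulations above are only formally meaningful on suitably analytic elements. The boundedness hypothesis on $\psi^{\#}$ is precisely what is needed to make $\widehat{\psi^{\#}}$ a well-defined element of $L^{\infty}(\G)$; once this is in place, one first verifies the identity for $\psi$ supported on $c_{00}(\bGamma)$, where $\hat{S}$ acts algebraically block by block and every slice-map manipulation is finite dimensional, and then extends by density and continuity to the general case stated in the lemma.
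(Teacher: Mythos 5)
Your argument is correct and is essentially the proof the paper defers to: the paper simply cites Kahng's Lemma 3.3 and remarks that its Fourier-transform convention forces $S^{-1}$ in place of $S$, which is exactly what your reduction to $(S^{-1}\otimes\op{Id})(W)=W^{\ast}$ makes precise (the \emph{inverse} antipode on the first leg is the correct choice here, being the one compatible with $(\Delta_{\bGamma}\otimes\op{Id})(W)=W_{23}W_{13}$ and $(\varepsilon\otimes\op{Id})(W)=\mathds{1}$, since then $W\cdot(S^{-1}\otimes\op{Id})(W)=\mathds{1}\otimes\mathds{1}$ by the antipode axiom). One notational caution: in the statement of the lemma $S$ denotes the antipode of $\bGamma$, which you write as $\hat S$, while you reserve $S$ for the antipode of $\G$.
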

\begin{proof}
See the proof of \cite[Lemma 3.3]{MR2669427}, but note that our convention for Fourier transform uses the adjoint of the multiplicative unitary, that is why we need to work with $S^{-1}$ rather than $S$. We also used the formula $(S^{-1}(x))^{\ast} = S(x^{\ast})$.
\end{proof}
We will now see, what happens when the functional comes from an element of $c_{00}(\bGamma)$.
\begin{lem}\label{Lem:antipode}
Let $x \in c_{00}(\bGamma)$ and $\psi:= \widehat{h_{R}}(\cdot x)$. Then $\psi^{\#}= \widehat{h_{R}}(\cdot S(x^{\ast}) \rho^{-2})$.
\end{lem}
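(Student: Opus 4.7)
The plan is to unfold $\psi^{\#}$ directly from its definition and reduce the statement to a one-line computation with the Haar weight, using the discrete-case simplifications $\tau_t=\sigma_{-t}$ and $[R,\sigma_t]=0$ together with the relation $\widehat{h_L}=\widehat{h_R}(\,\cdot\,\rho^{-2})$ that is read off from the explicit formulas for the two Haar weights.

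First I would write
\[
\psi^{\#}(y) \;=\; \overline{\psi(S(y^{*}))} \;=\; \overline{\widehat{h_R}(S(y^{*})x)},
\]
and apply two elementary facts: the positivity of $\widehat{h_R}$, which gives $\overline{\widehat{h_R}(z)}=\widehat{h_R}(z^{*})$, and the standard identity $(S(a))^{*}=S^{-1}(a^{*})$ relating the antipode to the involution. This already brings the claim down to verifying
\[
\widehat{h_R}(x^{*}S^{-1}(y)) \;=\; \widehat{h_R}(y\,S(x^{*})\,\rho^{-2})
\]
for all $y$ in the analytic domain.

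The engine for the rest is a short rearrangement lemma: from $R(bR(a))=aR(b)$ (antimultiplicativity of $R$ together with $R^{2}=\op{Id}$) combined with $\widehat{h_R}\circ R=\widehat{h_L}=\widehat{h_R}(\,\cdot\,\rho^{-2})$, one obtains the identity $\widehat{h_R}(aR(b))=\widehat{h_R}(bR(a)\rho^{-2})$. I would apply this with $a=x^{*}$ and $b=\sigma_{-\frac{i}{2}}(y)$, after rewriting $S^{-1}=\tau_{\frac{i}{2}}R=\sigma_{-\frac{i}{2}}R$ using $\tau_t=\sigma_{-t}$ in the discrete case, so that $x^{*}S^{-1}(y)=x^{*}R(\sigma_{-\frac{i}{2}}(y))$. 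This produces $\widehat{h_R}(\sigma_{-\frac{i}{2}}(y)\,R(x^{*})\,\rho^{-2})$.

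Finally I would shift $\sigma_{-\frac{i}{2}}$ from $y$ onto $R(x^{*})$ by applying $\sigma_{\frac{i}{2}}$ under the invariance $\widehat{h_R}\circ\sigma_z=\widehat{h_R}$ (valid at complex parameter by analyticity in the finite-block picture), using that $\rho^{-2}$ is fixed by $\sigma$ and that $\sigma$ commutes with $R$ in the discrete case. The factor that ends up sitting next to $y$ is $\sigma_{\frac{i}{2}}(R(x^{*}))=\tau_{-\frac{i}{2}}(R(x^{*}))=S(x^{*})$, which gives exactly $\widehat{h_R}(y\,S(x^{*})\,\rho^{-2})$. The main obstacle is purely bookkeeping: tracking where the various $\pm\frac{i}{2}$'s land, confirming that $\rho^{-2}$ being $\sigma$-invariant makes the left/right placement irrelevant, and invoking the discrete-case commutation $R\sigma_t=\sigma_t R$ (which follows from the subsection's lemma combined with $\widehat{h_L}=\widehat{h_R}\circ R$) at the right moment.
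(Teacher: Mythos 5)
Your proposal is correct and follows essentially the same route as the paper's proof: unfold $\psi^{\#}$ to $\widehat{h_{R}}(x^{\ast}S^{-1}(y))$, write $S^{-1}=R\sigma_{-\frac{i}{2}}$, use $R(bR(a))=aR(b)$ together with $\widehat{h_{R}}\circ R=\widehat{h_{L}}=\widehat{h_{R}}(\rho^{-2}\,\cdot\,)$, and finish with modular invariance, $\sigma$-invariance of $\rho^{-2}$, and $\sigma_{\frac{i}{2}}\circ R=S$. The only difference is cosmetic (you package the middle step as a stand-alone rearrangement identity $\widehat{h_{R}}(aR(b))=\widehat{h_{R}}(bR(a)\rho^{-2})$, and the commutation $R\sigma_{t}=\sigma_{t}R$ you invoke at the end is not actually needed there since one simply applies $\sigma_{\frac{i}{2}}$ to the whole product).
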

\begin{proof}
We have to compute 
\[
\psi^{\#}(y) = \overline{\widehat{h_{R}}(S(y^{\ast})^{\ast} x))} = \widehat{h_{R}}(x^{\ast} S(y^{\ast})^{\ast}) = \widehat{h_{R}}(x^{\ast} S^{-1}(y)).
\]
Since $S= \tau_{-\frac{i}{2}} R$ and $\tau_{-\frac{i}{2}} = \sigma_{\frac{i}{2}}$, we get
\[
\widehat{h_{R}}(R(\sigma_{-\frac{i}{2}}(y)  R(x^{\ast}))). 
\]
Since $\widehat{h_{R}}\circ R = \widehat{h_{L}}$ and $\widehat{h_{L}}(y) = \widehat{h_{R}}(\rho^{-2} y)$, we obtain
\[
\widehat{h_{R}}(\rho^{-2} \sigma_{-\frac{i}{2}}(y) R(x^{\ast})).
\]
Using the invariance under the modular group, and the fact that $\rho^{-2}$ is in the centralizer, we arrive at
\[
\widehat{h_{R}}(y \sigma_{\frac{i}{2}} (R(x^{\ast}) \rho^{-2}).
\]
As $\sigma_{\frac{i}{2}} R(x^{\ast}) = S(x^{\ast})$, we get our answer:
\[
\psi^{\#}(y) = \widehat{h_{R}}(y S(x^{\ast}) \rho^{-2}).
\]
\end{proof}
\subsection{Quantum adjacency matrices as convolution operators}
We work with the von Neumann algebra $\ell^{\infty}(\bGamma)$ equipped with the weight $\widehat{h_{R}})$. We will now assume that our adjacency matrix is given by a convolution against an element $P$, where we view it as a functional $\widehat{h_{R}}(\cdot P)$. To avoid technical issues, we will only convolve against elements belonging to the algebraic direct sum of matrix algebras. This is also justified by the fact that $P\ast \mathds{1} = \widetilde{h_{R}}(P)\mathds{1}$ and if this number is finite and $P$ is a projection (see Proposition \ref{Prop:projectionadj}), then $P\in c_{00}(\bGamma)$ by Lemma \ref{Lem:weightprojection}. Suppose then that our quantum adjacency matrix is given by the following formula:
\[
Ax:= P \ast x.
\]
We will prove a general formula for a Schur product of two convolution operators.
\begin{prop}\label{Prop:Schurproductconvolution}
Let $P_{1}, P_{2} \in c_{00}(\bGamma)$ and define $A_{1}(x):= P_{1}\ast x$, $A_{2}(x):= P_{2}\ast x$. Then $A:=m(A_{1}\otimes A_{2})m^{\ast}$ is given by $Ax:= P_1P_2 \ast x$.
\end{prop}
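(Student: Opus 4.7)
The plan is to verify the identity directly on the basis $(e_{ij}^{\alpha})$ of $c_{00}(\bGamma)$ and extend by linearity. First I would apply Proposition \ref{Prop:mstar} to the $\alpha$-block, where the weight takes the form $\op{Tr}(\rho_{\alpha})\op{Tr}(\rho_{\alpha}\cdot)$. The formula yields $m^{\ast}(e_{ij}^{\alpha}\rho_{\alpha}^{-1}) = \frac{1}{\op{Tr}(\rho_{\alpha})}\sum_{k}e_{ik}^{\alpha}\rho_{\alpha}^{-1}\otimes e_{kj}^{\alpha}\rho_{\alpha}^{-1}$, and right bimodularity of $m^{\ast}$ gives $m^{\ast}(e_{ij}^{\alpha}) = \frac{1}{\op{Tr}(\rho_{\alpha})}\sum_{k}e_{ik}^{\alpha}\rho_{\alpha}^{-1}\otimes e_{kj}^{\alpha}$. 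Applying $A_{1}\otimes A_{2}$ followed by $m$ then produces
\[
m(A_{1}\otimes A_{2})m^{\ast}(e_{ij}^{\alpha}) = \frac{1}{\op{Tr}(\rho_{\alpha})}\sum_{k}\bigl(P_{1}\ast (e_{ik}^{\alpha}\rho_{\alpha}^{-1})\bigr)\bigl(P_{2}\ast e_{kj}^{\alpha}\bigr).
\]

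The remaining task is to identify the right-hand side with $(P_{1}P_{2})\ast e_{ij}^{\alpha}$. My preferred approach is to pass to the Fourier transform. By the identity $\widehat{\psi_{1}\ast\psi_{2}} = \widehat{\psi_{2}}\widehat{\psi_{1}}$, each left convolution operator $A_{P_i}$ becomes right multiplication by $\widehat{P_{i}}$ inside $L^{\infty}(\G)$. Using Lemma \ref{Lem:FTmatrixelement}, the elements $e_{ij}^{\alpha}\rho_{\alpha}^{-1}$ have Fourier transforms proportional to $u_{ji}^{\alpha}$, so both sides of the putative identity become explicit combinations of matrix coefficients, and the equality follows from the orthogonality relations for $\op{Pol}(\G)$.

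An alternative route is via Proposition \ref{Prop:Generalizedchoi}: polarising the statement that $\Psi^{\op{KMS}}(A)$ is idempotent iff $m(A\otimes A)m^{\ast}=A$ yields $\Psi^{\op{KMS}}(m(T_{1}\otimes T_{2})m^{\ast}) = \Psi^{\op{KMS}}(T_{1})\Psi^{\op{KMS}}(T_{2})$ for all linear maps $T_{1}, T_{2}$. The proposition then reduces to checking that $P\mapsto \Psi^{\op{KMS}}(A_{P})$ is a homomorphism from $(c_{00}(\bGamma),\cdot)$ into $\op{B}\otimes \op{B}^{\op{op}}$, which is a block-by-block calculation via Lemma \ref{Lem:choirep}.

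The main obstacle I expect is careful bookkeeping of the modular twists: factors of $\rho_{\alpha}$ and $\op{Tr}(\rho_{\alpha})$ proliferate because the weight $\widehat{h_{R}}$ is non-tracial, and matching the twists appearing in $m^{\ast}$ with those implicit in the convolution $P\ast x$ requires repeated use of the KMS property of $\widehat{h_{R}}$ together with Lemma \ref{Lem:modularidem}.
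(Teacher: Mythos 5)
Your main route is essentially the paper's proof: reduce to the block formula for $m^{\ast}$, pass to the Fourier transform so that the left convolutions become right multiplications by $\widehat{P_i}$, and verify the resulting identity $\sum_k u_{jk}^{\alpha}\widehat{P_2}\ast u_{ki}^{\alpha}\widehat{P_1}=u_{ji}^{\alpha}\widehat{P_1P_2}$ on matrix coefficients --- though the paper closes this last step not via orthogonality relations but via the cleaner observation that it reads $x_{(1)}\widehat{P_2}\ast x_{(2)}\widehat{P_1}=x\,(\widehat{P_2}\ast\widehat{P_1})$, which follows from $\Delta(bx)=\Delta(b)\Delta(x)$ under $h\otimes h$. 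One caveat on your aside: the multiplicativity $\Psi^{\op{KMS}}(m(T_1\otimes T_2)m^{\ast})=\Psi^{\op{KMS}}(T_1)\Psi^{\op{KMS}}(T_2)$ is true, but it is not obtained by \emph{polarising} the idempotent characterisation in Proposition \ref{Prop:Generalizedchoi} (two quadratic maps can share their fixed-point sets without being equal); it is the underlying fact from which that characterisation is derived.
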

\begin{proof}
In this proof we will use the notation $\cF$ for the Fourier transform on $\mathbb{G}$.

The formula for $A$ is (for each $\alpha \in \op{Irr}\G$):
\[
\frac{1}{\op{dim}_q(\alpha)} \sum_{k} A_1(e_{ik}^{\alpha}\rho_{\alpha}^{-1}) A_2(e_{kj}^{\alpha}\rho_{\alpha}^{-1}) = A(e_{ij}^{\alpha}\rho_{\alpha}^{-1}).
\]
By Lemma \ref{Lem:FTmatrixelement} we can write $e_{ij}^{\alpha}\rho_{\alpha}^{-1} = \op{dim}_q(\alpha) \cF(u_{ji}^{\alpha})$, so we want
\[
\sum_{k} A_1(\cF(u_{ki}^{\alpha})) A_2(\cF(u_{jk}^{\alpha})) = A(\cF(u_{ji}^{\alpha})).
\]
As the Fourier transform switches convolution to multiplication (and reverses the order), we have $A_k\circ \cF = \cF \circ \widehat{A_k}$, where $\widehat{A_k}y:= y \widehat{P_k}$, with $\widehat{P_k}$ being the inverse Fourier transform of $P_k$. We can therefore rewrite our condition further as
\[
\sum_{k} \cF( u_{ki}^{\alpha} \widehat{P_1}) \cF( u_{jk}^{\alpha} \widehat{P_2}) = \cF ( u_{ji}^{\alpha}\widehat{P}),
\]
and using once again the properties of the Fourier transform we finally arrive at 
\[
\sum_{k}  u_{jk}^{\alpha}\widehat{P_2} \ast  u_{ki}^{\alpha} \widehat{P_1} =  u_{ji}^{\alpha} \widehat{P}.
\]
As the span of $u_{ij}$'s is dense, we can rewrite this equation as (using Sweedler's notation)
\[
  x_{(1)}\widehat{P_2} \ast  x_{(2)} \widehat{P_1} =  x \widehat{P}.
  \]
For $x=\mathds{1}$ we get $\widehat{P_2} \ast \widehat{P_1} = \widehat{P}$, i.e. $P = P_1 P_2$. This condition turns out to be sufficient as well. Indeed, we can compute $ x_{(1)} \widehat{P_2} \ast  x_{(2)}\widehat{P_1}$ acting on an element $b$ as
\begin{align*}
( x_{(1)}\widehat{P_2} \ast  x_{(2)}\widehat{P_1})(b) &= (h(\cdot x_{(1)} \widehat{P_2}) \otimes h(\cdot x_{(2)}\widehat{P_1}) \Delta(b) \\
&= (h\otimes h)(\Delta(b)(x_{(1)} \otimes x_{(2)})(\widehat{P_2} \otimes \widehat{P_1})) \\
&= (h\otimes h)(\Delta(b)\Delta(x) (\widehat{P_2} \otimes \widehat{P_1}) ) \\
&= (h\otimes h)( \Delta(bx)(\widehat{P_2} \otimes \widehat{P_1})) \\ 
&= (\widehat{P_2} \ast \widehat{P_1})(bx) = \widehat{P}(bx), 
\end{align*}
where the last expression is clearly equal to $x\widehat{P}$ acting on $b$. 
\end{proof}
\begin{prop}\label{Prop:projectionadj}
The map $A: \ell^{\infty}(\bGamma) \to \ell^{\infty}(\bGamma)$ given by $Ax:= P\ast x$ with $P\in c_{00}(\bGamma)$ is a quantum Schur idempotent iff $P^2 = P$ is an idempotent. Moreover, it is completely positive iff $P$ is a projection.
\end{prop}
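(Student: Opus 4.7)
The first claim—that $A$ is a quantum Schur idempotent iff $P^2 = P$—is an immediate application of Proposition \ref{Prop:Schurproductconvolution} with $P_1 = P_2 = P$: it tells us that $m(A \otimes A)m^*$ is itself a convolution operator, namely convolution by $P \cdot P = P^2$. Hence the equation $A = m(A\otimes A)m^*$ is equivalent to $P \ast x = P^2 \ast x$ for every $x \in c_{00}(\bGamma)$, which in turn is equivalent to $P = P^2$ by injectivity of the map $Q \mapsto Q \ast (\cdot)$ (for instance through Fourier inversion, or by evaluating on the units of the central summands).

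Assuming now that $P^2 = P$, the second claim is that $A$ is completely positive iff additionally $P^* = P$. For this I invoke the Choi-type correspondence of Proposition \ref{Prop:Generalizedchoi}: the element $\Psi^{\op{KMS}}(A) \in \ell^\infty(\bGamma) \overline{\otimes} \ell^\infty(\bGamma)^{\op{op}}$ is an idempotent by part (i); an idempotent in a von Neumann algebra is positive iff it is self-adjoint iff it is an orthogonal projection. Combining parts (ii) and (iii) of that proposition, this reduces complete positivity of $A$ to $A$ being $\ast$-preserving, and it only remains to check that $A = P \ast (\cdot)$ is $\ast$-preserving iff $P = P^*$.

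This last equivalence is the computational heart of the argument. Expanding $P \ast x$ in terms of the comultiplication $\Delta_\bGamma$ and the right Haar weight $\widehat{h_R}$, the identity $A(x^*) = (Ax)^*$ becomes a statement about the functional $\omega_P := \widehat{h_R}(\cdot\, P)$ being real in the appropriate sense. Using that $\Delta_\bGamma$ is a $\ast$-homomorphism, that $\widehat{h_R}$ sends self-adjoints to reals, and the discrete-case commutation $\Delta_\bGamma \circ \sigma_t = (\sigma_t \otimes \sigma_t) \circ \Delta_\bGamma$ recorded in the commutation-relations lemma of Subsection 4.1, the KMS twists produced by the non-traciality of $\widehat{h_R}$ cancel, and the condition collapses to $P = P^*$. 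The main obstacle is precisely this careful bookkeeping of modular factors; once the right rearrangement is chosen---for instance by testing on the Fourier basis $\{e^\alpha_{ji}\rho_\alpha^{-1}\}_{\alpha, i, j}$---the iff with $P = P^*$ falls out cleanly.
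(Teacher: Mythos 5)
Your proposal is correct and follows essentially the same route as the paper: the first claim is read off from Proposition \ref{Prop:Schurproductconvolution}, and complete positivity is reduced to $\ast$-preservation via the idempotent generalized Choi matrix (this is exactly Remark \ref{Rem:starvspositive}), after which one verifies $(P\ast x)^{\ast}=P^{\ast}\ast x^{\ast}$ using the KMS property of $\widehat{h_{R}}$ and the fact that $\Delta_{\bGamma}$ is a $\ast$-homomorphism intertwining the modular group. The only difference is that you sketch this last computation rather than writing it out, but the ingredients you name are precisely the ones the paper's calculation uses.
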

\begin{proof}
We have already showed the first part. For the second part, we will check that $(P\ast x)^{\ast} = P^{\ast}\ast x^{\ast}$. Indeed, let $P\ast x = T$, which means that for any $y\in \ell^{\infty}(\bGamma)$ we have $\widehat{h_{R}}(y_{(1)}P) \widehat{h_{R}}(y_{(2)} x) = \widehat{h_{R}}(yT)$. We then have
\[
\widehat{h_{R}}(yT^{\ast}) = \overline{\widehat{h_{R}}(T y^{\ast})} = \overline{\widehat{h_{R}}(\sigma_{i}(y^{\ast}) T)}.
\]
We can now use the defining property of $T$ and the fact that on a discrete quantum group the comultiplication is a $\ast$-homomorphism that intertwines the modular group and arrive at
\[
\widehat{h_{R}}(\sigma_{i}(y^{\ast}) T) = \widehat{h_{R}}(\sigma_{i}(y_{(1)}^{\ast}) P) \widehat{h_{R}}(\sigma_{i}(y_{(2)}^{\ast}) x).
\]
After applying the complex conjugation we obtain
\[
\overline{\widehat{h_{R}}(\sigma_{i}(y^{\ast}) T)} = \widehat{h_{R}}(P^{\ast}\sigma_{-i}(y_{(1)}) )\widehat{h_{R}}(x^{\ast}\sigma_{-i}(y_{(2)})),
\]
which is equal to
\[
\widehat{h_{R}}(y_{(1)}P^{\ast} )\widehat{h_{R}}(y_{(2)}x^{\ast} ) = (P^{\ast}\ast x^{\ast})(y),
\]
thus $(P \ast x)^{\ast} = P^{\ast} \ast x^{\ast}$, hence $A$ is $\ast$-preserving iff $P=P^{\ast}$, and, as we already mentioned in Remark \ref{Rem:starvspositive}, complete positivity is equivalent to being $\ast$-preserving for quantum Schur idempotents.
\end{proof}
We will now characterize which covariant adjacency matrices are GNS-symmetric and which are KMS-symmetric.
\begin{thm}\label{Thm:KMSsymmetry}
Let $P=P^{\ast} \in c_{00}(\bGamma)$. $Ax:= P\ast x$ is GNS-symmetric iff $P=S(P)$ and $A$ is KMS-symmetric iff $P=R(P)$.
\end{thm}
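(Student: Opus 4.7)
The plan is to identify both the GNS and KMS adjoints of the convolution operator $A = L_P$ as left convolutions themselves and to read off the symmetry conditions. Since $A$ is a left convolution, it commutes with the right regular representation of $\bGamma$ on itself, i.e. with every right convolution operator. The GNS and KMS inner products coming from $\widehat{h_R}$ are invariant under this right action, so the adjoints $A^{\ast}_{\op{GNS}}$ and $A^{\ast}_{\op{KMS}}$ also commute with all right convolutions and must therefore themselves be left convolutions: $A^{\ast}_{\op{GNS}} = L_{Q_1}$ and $A^{\ast}_{\op{KMS}} = L_{Q_2}$ for unique $Q_1, Q_2 \in c_{00}(\bGamma)$. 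The theorem is then equivalent to proving that $Q_1 = S(P)$ and $Q_2 = R(P)$.

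By the computation in the proof of Proposition \ref{Prop:Schurproductconvolution}, the Fourier transform $\cF$ is a unitary intertwining $L_P$ with right multiplication $R_{\widehat{P}}$ by $\widehat{P}$ on $L^{2}(\mathbb{G})$. It preserves both GNS and KMS inner products, so adjoints transport; a direct computation using the KMS property of $h$ yields
\[
(R_{\widehat{P}})^{\ast}_{\op{GNS}} = R_{\sigma^{h}_{-i}(\widehat{P}^{\ast})}, \qquad (R_{\widehat{P}})^{\ast}_{\op{KMS}} = R_{\sigma^{h}_{-i/2}(\widehat{P}^{\ast})}.
\]
Pulling these back through $\cF$ gives $\widehat{Q_1} = \sigma^{h}_{-i}(\widehat{P}^{\ast})$ and $\widehat{Q_2} = \sigma^{h}_{-i/2}(\widehat{P}^{\ast})$, so it remains to translate each identity back to the $\bGamma$-side.

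By Lemma \ref{Lem:antipode} applied with $P = P^{\ast}$ we have $\widehat{P}^{\ast} = \widehat{S(P)\rho^{-2}}$. Combining this with the way the Fourier transform intertwines the modular and scaling groups of $\mathbb{G}$ and $\bGamma$ --- in particular with the polar decomposition $S = R \circ \tau_{-i/2}$ of the antipode and the discrete-case identity $\tau_{t} = \sigma^{\widehat{h_R}}_{-t}$ --- the extra factor $\rho^{-2}$ (which reflects the difference between $\widehat{h_R}$ and $\widehat{h_L}$) is absorbed by the modular twists. The upshot is that the full step $\sigma^{h}_{-i}$ on the $\mathbb{G}$-side corresponds to the antipode $S$ on the $\bGamma$-side, while the half step $\sigma^{h}_{-i/2}$ corresponds to the unitary antipode $R$. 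Consequently $Q_1 = S(P)$ and $Q_2 = R(P)$, which gives the theorem.

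The main obstacle is the modular-theoretic bookkeeping in the last step: three families of automorphisms ($\sigma^{h}$ on $\mathbb{G}$, together with $\sigma^{\widehat{h_R}}$ and $\tau$ on $\bGamma$) enter simultaneously, and the factor $\rho^{-2}$ from Lemma \ref{Lem:antipode} must cancel cleanly against the intertwiners of these groups across the Fourier transform. What makes the answer come out so cleanly is precisely the polar decomposition $S = R \circ \tau_{-i/2}$: the difference between the \emph{full} modular step appearing in the GNS adjoint and the \emph{half} modular step appearing in the KMS adjoint is exactly the scaling half-step that separates $S$ from $R$.
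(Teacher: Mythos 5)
Your GNS half is essentially the paper's own argument: push $A$ through the Fourier transform to the right multiplication $R_{\widehat{P}}$, identify its GNS adjoint as $R_{\sigma^{h}_{-i}(\widehat{P}^{\ast})}$, and unwind via Lemma \ref{Lem:antipode} together with $\sigma^{h}_{z}(\widehat{T})=\widehat{\rho^{iz}T\rho^{iz}}$ and $S^{2}=\tau_{-i}=\sigma_{i}$ on $\bGamma$. That part is sound, although the final ``bookkeeping'' is asserted rather than carried out.

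The KMS half, however, rests on a false premise. The Fourier transform is a unitary for the \emph{GNS} inner products of $h$ and $\widehat{h_{R}}$ (the Plancherel statement in the paper), but it does \emph{not} preserve the KMS inner products, because it does not intertwine the two modular groups: it carries $e_{kl}^{\alpha}$ to a multiple of $u_{lk}^{\alpha}$, and with $\rho_{\alpha}$ diagonal one has $\sigma^{\widehat{h_{R}}}_{t}(e_{kl}^{\alpha})=\rho_{k}^{it}\rho_{l}^{-it}e_{kl}^{\alpha}$ while $\sigma^{h}_{t}(u_{lk}^{\alpha})=\rho_{l}^{it}\rho_{k}^{it}u_{lk}^{\alpha}$. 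Thus conjugating $\sigma^{\widehat{h_{R}}}_{t}$ through the Fourier transform produces the \emph{scaling} group $\tau_{-t}$ of $\mathbb{G}$, not $\sigma^{h}_{t}$; equivalently, $\|e_{kl}^{\alpha}\|^{2}_{\op{KMS}}=\op{dim}_{q}(\alpha)\,\rho_{k}^{1/2}\rho_{l}^{1/2}$ on the $\bGamma$-side, whereas its image has $h$-KMS norm squared $\op{dim}_{q}(\alpha)\,\rho_{k}^{1/2}\rho_{l}^{3/2}$. Consequently the KMS adjoint of $L_{P}$ is \emph{not} carried to the $h$-KMS adjoint of $R_{\widehat{P}}$, and your criterion $\widehat{P}^{\ast}=\sigma^{h}_{i/2}(\widehat{P})$ is the wrong one: unwinding it with Lemma \ref{Lem:antipode} and $\sigma^{h}_{z}(\widehat{T})=\widehat{\rho^{iz}T\rho^{iz}}$ gives $S(P^{\ast})=\rho^{-1/2}P\rho^{3/2}$, i.e. $R(P^{\ast})=P\rho$, which differs from the desired $R(P^{\ast})=P$ by a factor of $\rho$ and fails in any non-Kac example. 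The correct $\mathbb{G}$-side condition is $\widehat{P}^{\ast}=\tau_{i/2}\sigma^{h}_{i}(\widehat{P})$; the ``half modular step'' separating $S$ from $R$ is a $\tau$-half-step, not a $\sigma^{h}$-half-step, and that is exactly the discrepancy your argument overlooks. The cleanest repair is the paper's: KMS symmetry of $A$ is GNS symmetry of $\sigma_{-\frac{i}{4}}\circ A\circ\sigma_{\frac{i}{4}}$, which is again a convolution operator, namely $L_{\sigma_{-i/4}(P)}$, so one can simply quote the already-proved GNS criterion and then use $S\sigma_{-\frac{i}{2}}=R$.
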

\begin{proof}
We will view the convolution operator as a multiplication operator on the dual compact quantum group, using the Fourier transform. After taking the Fourier transform $A$ will become a \emph{right} multiplication operator by $\widehat{P}$. The adjoint of a right multiplication operator by $x$ is the right multiplication by $\sigma_{-i}^{\mathbb{G}}(x^{\ast})$, as can be checked using the KMS property. Therefore the GNS-symmetry amounts to $(\widehat{P})^{\ast} = \sigma_{i}(\widehat{P})$. We know from Lemma \ref{Lem:antipode} that $(\widehat{P})^{\ast}$ is the Fourier transform of $S(P^{\ast})\rho^{-2}$. Let us now check that $\sigma_{z}(\widehat{P})$ is the Fourier transform of $\rho^{iz} P \rho^{iz}$. Indeed, it is sufficient to check the formula when $\widehat{P} = u_{kl}^{\alpha}$, and we can moreover assume that we have chosen a basis such that $\rho_{\alpha}$ is diagonal with diagonal entries $\rho_{\alpha,k}$. Then $\sigma_{z}(u_{kl}^{\alpha}) = \rho_{\alpha, k}^{iz} \rho_{\alpha, l}^{iz} u_{kl}^{\alpha}$, which is the Fourier transform of $ \frac{1}{\op{Tr}(\rho_{\alpha}^{-1})}\rho_{\alpha, l}^{iz} e_{lk}^{\alpha} \rho_{\alpha, k}^{iz} \rho_{\alpha}^{-1}$, proving the formula. We therefore arrive at the equality
\[
S(P^{\ast})\rho^{-2} = \rho^{-1} P \rho^{-1},
\]
i.e. $ S(P^{\ast}) = \sigma_{i}(P)$. Recall that $S^{2} = \tau_{-i} = \sigma_{i}$, hence $S(P^{\ast}) = S^{2}(P)$, i.e. $S(P) = P^{\ast}$. Since by assumption $P=P^{\ast}$, we obtain $S(P)=P$.

For KMS-symmetry, note that it is equivalent to the GNS-symmetry of the map $\widetilde{A}(x):= \sigma_{-\frac{i}{4}} A(\sigma_{\frac{i}{4}}(x))$. It is not difficult to check that this map is given by $\widetilde{A}x = \sigma_{-\frac{i}{4}}(P)\ast x$, using the fact that the comultiplication intertwines the modular group in our case. We therefore get $S(\sigma_{-\frac{i}{4}}(P)) = (\sigma_{-\frac{i}{4}}(P))^{\ast} = \sigma_{\frac{i}{4}}(P^{\ast})$. This leads to $S \sigma_{-\frac{i}{2}}(P) = P^{\ast}$, but $S \sigma_{-\frac{i}{2}} = S \tau_{\frac{i}{2}} = R$, so $R(P)=P^{\ast}$. By assumption $P=P^{\ast}$, thus $R(P)=P$.
\end{proof}
\section{Quantum Cayley graphs}\label{Sec:qcayley}
Classically the Cayley graph is defined using a generating set of our group. In the quantum case we will replace a finite subset of a group with a projection $P \in c_{00}(\bGamma)$. Typically Cayley graphs are undirected, so we will also assume that $P$ is invariant under the unitary antipode $R$. Moreover, a generating set should not contain the unit, which will easily translate to the fact that the counit $\varepsilon$ applied to $P$ should be equal to zero; less abstractly, the component of $P$ corresponding to the trivial representation of $\mathbb{G}$ should be equal to $0$. The only nontrivial condition is what it means for such a projection to be generating. We will denote by $[T]$ the range projection of an operator $T$.
\begin{defn}
Let $\bGamma$ be a discrete quantum group and let $P\in c_{00}(\bGamma)$ be a projection such that $R(P)=P$ and $\varepsilon(P) = 0$. We say that $P$ is \textbf{generating} if $\bigvee_{n} [P^{\ast n}] = \mathds{1} \in \ell^{\infty}(\bGamma)$. Then we define the quantum Cayley graph of $\bGamma$ via the adjacency matrix
\[
Ax: = P \ast x.
\]
These quantum graphs are always \textbf{regular} with $D:= A\mathds{1} = \widehat{h_{R}}(P) \mathds{1}$.
\end{defn}
\begin{rem}
The condition $\varepsilon(P)=0$ is equivalent to the fact that the resulting quantum graph has no loops, i.e. $m(A\otimes \op{Id})m^{\ast}$. Indeed $\op{Id}$ is a convolution operator against $\delta_{\varepsilon}$, so $m(A\otimes \op{Id})m^{\ast}$ is a convolution operator against $P\delta_{\varepsilon} = \varepsilon(P)\delta_{\varepsilon}$ by Proposition \ref{Prop:Schurproductconvolution}.
\end{rem}
There is a natural special case, where we choose a central generating projection, which leads to a more familiar notion.
\begin{prop}\label{Prop:generatingrep}
Let $z \in c_{00}(\bGamma)$ be a central projection, i.e. it is a sum of minimal central projections corresponding to certain irreducible representations of $\mathbb{G}$: $z = \bigoplus_{k=1}^{n} \mathds{1}_{\alpha_{k}}$. Let $S=\{\alpha_1,\dots, \alpha_n\}$. Then $z= R(z)$ iff $S=\overline{S}$, i.e. it is closed under taking conjugate representations. The condition $\varepsilon(z)=0$ is equivalent to the fact that $S$ does not contain the trivial representation. Moreover, $z$ is generating iff the set $S$ is a generating set of irreducible representations of $\mathbb{G}$.
\end{prop}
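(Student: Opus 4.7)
The plan is to treat the three equivalences in turn, with the bulk of the argument going into the last one via the Fourier transform.

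Parts (1) and (2) are short. For (1), since the unitary antipode $R$ is a normal $\ast$-antiautomorphism of $\ell^{\infty}(\bGamma)$, it preserves the centre and therefore permutes the minimal central projections $\mathds{1}_{\alpha}$; by standard theory of discrete quantum groups this permutation is exactly $\alpha \mapsto \overline{\alpha}$, so $R(\mathds{1}_{\alpha}) = \mathds{1}_{\overline{\alpha}}$ and $R(z) = z$ is equivalent to $S = \overline{S}$. For (2), the counit $\varepsilon$ on $\ell^{\infty}(\bGamma)$ is dual to the unit of $\mathbb{G}$ and satisfies $\varepsilon(\mathds{1}_{\alpha}) = \delta_{\alpha, 1}$, so $\varepsilon(z)$ just counts how many $\alpha_k$ equal the trivial representation and vanishes iff $1 \notin S$.

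For (3), my plan is to compute the range projections $[z^{\ast m}]$ by transferring convolution to a product in the commutative algebra of quantum characters via the Fourier transform. Using $\widehat{x} = \sum_{\beta,i,j}\widehat{h_R}(e_{ij}^{\beta}x)u_{ij}^{\beta}$ together with the orthogonality relation $\widehat{h_R}(e_{ij}^{\alpha}) = \op{dim}_q(\alpha)(\rho_{\alpha})_{ji}$, one first finds
\[
\widehat{\mathds{1}_{\alpha}} = \op{dim}_q(\alpha)\, \chi_{\alpha}^q, \qquad \chi_{\alpha}^q := \sum_{i,j}(\rho_{\alpha})_{ji}u_{ij}^{\alpha}.
\]
Since matrix coefficients of tensor products satisfy $u^{\alpha \otimes \beta} = u^{\alpha}\otimes u^{\beta}$, the $\rho$-matrices satisfy $\rho_{\alpha \otimes \beta} = \rho_{\alpha}\otimes \rho_{\beta}$, and the decomposition $\alpha \otimes \beta \cong \bigoplus_{\gamma}N_{\alpha\beta}^{\gamma}\gamma$ with $N_{\alpha\beta}^{\gamma} \in \mathbb{Z}_{\geqslant 0}$ is implemented by a unitary intertwiner, the quantum characters satisfy the fusion identity
\[
\chi_{\alpha}^q \chi_{\beta}^q = \chi_{\alpha \otimes \beta}^q = \sum_{\gamma} N_{\alpha\beta}^{\gamma}\, \chi_{\gamma}^q.
\]
Because $\widehat{\mathds{1}_{\alpha}}$ and $\widehat{\mathds{1}_{\beta}}$ lie in the commutative span of quantum characters, the ordering issue in $\widehat{P_1 \ast P_2} = \widehat{P_2}\widehat{P_1}$ is irrelevant, and applying inverse Fourier transform yields
\[
\mathds{1}_{\alpha} \ast \mathds{1}_{\beta} = \sum_{\gamma} N_{\alpha\beta}^{\gamma}\,\frac{\op{dim}_q(\alpha)\op{dim}_q(\beta)}{\op{dim}_q(\gamma)}\,\mathds{1}_{\gamma}.
\]
The summands are pairwise orthogonal and the coefficients are nonnegative, so $[\mathds{1}_{\alpha}\ast \mathds{1}_{\beta}] = \sum_{\gamma \prec \alpha \otimes \beta}\mathds{1}_{\gamma}$.

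Expanding $z = \sum_{k=1}^{n}\mathds{1}_{\alpha_k}$ and iterating, a straightforward induction on $m$ gives $[z^{\ast m}] = \sum_{\gamma \in T_m}\mathds{1}_{\gamma}$, where $T_m \subset \op{Irr}(\mathbb{G})$ is the set of irreducible representations appearing as subrepresentations of some $m$-fold tensor product $\alpha_{k_1}\otimes \cdots \otimes \alpha_{k_m}$ with all $\alpha_{k_j} \in S$. Hence $\bigvee_{m}[z^{\ast m}] = \mathds{1}$ is equivalent to $\bigcup_{m\geqslant 1} T_m = \op{Irr}(\mathbb{G})$, which is precisely the statement that $S$ generates the fusion semiring of $\mathbb{G}$. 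The main obstacle I foresee is purely bookkeeping: verifying the quantum-character product formula in a basis-free fashion (independent of any diagonalisation of $\rho_{\alpha}$), and confirming that the nonnegativity of all coefficients in $\mathds{1}_{\alpha}\ast \mathds{1}_{\beta}$ identifies the range projection with the sum of central projections supporting nonzero terms. Both steps are routine once the Fourier transform picture is in place.
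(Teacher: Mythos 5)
Your argument is essentially the paper's own proof, written out in more detail: the paper also dismisses the first two equivalences as immediate, computes $\widehat{\mathds{1}_{\alpha}} = \dim_q(\alpha)\,\sigma^{\mathbb{G}}_{-\frac{i}{2}}(\chi_{\alpha})$ (which is exactly your $\dim_q(\alpha)\chi^q_{\alpha}$), and then reduces the generating condition to the fusion of (twisted) characters under powers of $\widehat{z}$. Your version is correct in substance and actually supplies the bookkeeping the paper leaves implicit.

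One justification you give is false, though it happens to be harmless here: the span of the quantum characters is \emph{not} commutative for a general compact quantum group, because $\chi^q_{\alpha}\chi^q_{\beta}=\chi^q_{\alpha\otimes\beta}$ while $\chi^q_{\beta}\chi^q_{\alpha}=\chi^q_{\beta\otimes\alpha}$, and the fusion semiring need not be commutative --- the free unitary quantum groups $U_F^+$ used as the paper's main example are precisely a case where it is not. So you cannot wave away the order reversal in $\widehat{P_1\ast P_2}=\widehat{P_2}\,\widehat{P_1}$ on those grounds; your displayed formula for $\mathds{1}_{\alpha}\ast\mathds{1}_{\beta}$ should carry $N^{\gamma}_{\beta\alpha}$ rather than $N^{\gamma}_{\alpha\beta}$. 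The argument survives anyway for two independent reasons: the object you actually iterate is a power of the single element $\widehat{z}$, which commutes with itself, and your set $T_m$ already ranges over all orderings of $m$-fold products of elements of $S$, so it is invariant under reversing the word. I would simply delete the commutativity remark and keep track of the order of the fusion coefficients.
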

\begin{proof}
Only the statement about the criterion for being a generating set requires proof. Let $\mathds{1}_{n_{\alpha}}$ be a minimal central projection. By Lemma \ref{Lem:FTmatrixelement} we have that $\widehat{\mathds{1}_{n_{\alpha}} \rho_{\alpha}^{-1}} =\op{Tr}(\rho_{\alpha}^{-1}) \chi_{\alpha}$. Because of the formula $\sigma_{w}^{\mathbb{G}}(\widehat{T}) = \rho^{iw} T \rho^{iw}$ established in the proof of Theorem \ref{Thm:KMSsymmetry}, we get $\widehat{\mathds{1}_{n_{\alpha}}} = \op{Tr}(\rho_{\alpha}^{-1}) \sigma^{\mathbb{G}}_{-\frac{i}{2}}(\chi_{\alpha})$. Using the Fourier transform, we can view the convolution operator by $z$ as a multiplication operator by the character (acted on by the modular group) of the representation $\pi:=\alpha_1\oplus\dots\oplus \alpha_n$. If $S$ is a generating set of representations, then eventually all characters will appear in the decomposition of the powers of the character $\chi_{\pi}$. By taking an inverse Fourier transform, we will obtain that $z^{\ast m}$ is a linear combination of central projections and eventually all minimal central projections will appear in its decomposition, which shows that $z$ is generating. If we assume that $z$ is generating we can reverse this argument to show that $S$ is a generating set of representations.
\end{proof}
\begin{rem}
The condition that $\G$ admits a finite generating set of representations is used in \cite{Fim10} as a definition of $\bGamma$ being finitely generated. This is equivalent to existence of a generating projection belonging to $c_{00}(\bGamma)$. Indeed, a generating set of representations directly gives a (central) projection in $c_{00}(\bGamma)$ and if we start from any generating projection then its central cover remains generating and an element of $c_{00}(\bGamma)$.
\end{rem}
\begin{ex}
We already mentioned that if $P \in c_{00}(\bGamma)$ is a generating projection, then so is its central cover $z(P)$. The converse, however is not true, as the following simple example shows. Let $G=SU(2)$ and let $\bGamma$ be its dual. We will take as $P$ the rank one projection $e_{11}$ inside the $M_2$ summand corresponding to the fundamental representation $\pi$ of $SU(2)$. Its central cover corresponds to a generating representation, hence is generating itself; we will show that $P$ is not generating. Convolution powers of $e_{11}$ are not difficult to compute using the Fourier transform. Indeed, the Fourier transform of $e_{11}$ is a multiple of the matrix element $u_{11}$, the matrix element of the fundamental representation $\pi$ at the vector $e_{1}$. The power $u_{11}^{n}$ is the matrix element of $\pi^{\otimes n}$ at the vector $e_{1}^{\otimes n}$. This vector happens to be a symmetric tensor, so $u_{11}^{n}$ is a matrix element of the symmetric tensor power $\op{Sym}^{n}(\pi)$. It is well-known that this representation is irreducible, hence the inverse Fourier transform of $u_{11}^{n}$ is a multiple of a rank one projection inside the summand $M_{n+1}$. It is clear that $P$ is not generating, because it misses a lot, e.g. the trivial representation. 
\end{ex}
\subsection{Independence of the choice of the generating set}
For a group $\Gamma$ the Cayley graph does depend on the choice of the generating set but they are all bi-Lipschitz equivalent, so more or less indistinguishable in the sense of metric/coarse geometry. Fortunately there is a metric structure available in the quantum case that will allow us to define the notion of bi-Lipschitz equivalence. It is a slightly
\begin{defn}[Kuperberg-Weaver, \cite{MR2908248}]
Let $\mathsf{M} \subset \op{B}(\HH)$ be a von Neumann algebra. A \textbf{quantum metric} on $\mathsf{M}$ is an increasing family of weak$^{\ast}$ closed operator systems $(V_{t})_{t\geqslant 0}$ such that
\begin{enumerate}[(i)]
\item $V_{0} = \mathsf{M}'$,
\item $V_s V_t \subset V_{s+t}$,
\item $V_t = \bigcap_{s>t} V_{s}$.
\end{enumerate}
\end{defn}
For $\mathsf{M} = \ell^{\infty}(X)$ we can intepret these operator systems $V_t$ as operators of propagation at most $t$. 
\begin{rem}
This notion of a quantum metric is related to the more familiar one due to Rieffel (see \cite{Rie04}). Namely, a quantum metric in the sense of Kuperberg and Weaver yields a Lipschitz seminorm in the sense of Rieffel (see \cite[Definition 4.19]{MR2908248}).
\end{rem}

For \emph{undirected} quantum graphs the quantum metric can be built in the following way. First of all, we can associate a weak$^{\ast}$ closed $\mathsf{M}'$-bimodule $V\subset \op{B}(\HH)$ to a quantum adjacency matrix, using a representation $\mathsf{M}\subset \op{B}(\HH)$; just like in Subsection \ref{Subsec:Explicit} we we will work with the representation $\mathsf{M}\simeq \ell^{\infty}-\bigoplus_{\alpha} M_{n_{\alpha}} \subset \op{B}(\bigoplus_{\alpha} \mathbb{C}^{n_{\alpha}})$. Then we simply define $V_{t} := \overline{\op{span}}\{\sum_{k=0}^{[t]} V^{k}\}$, where we interpret $V^{0}$ as $\mathsf{M}'$. 
\begin{defn}
Let $(V_{t})_{t\geqslant 0}$ and $(W_{t})_{t\geqslant 0}$ be two quantum metrics on $\mathsf{M}$. We say that they are \textbf{bi-Lipschitz equivalent} if there exists a constant $M>0$ such that $V_{t} \subset W_{Mt}$ and $W_{t} \subset V_{Mt}$ for all $t\geqslant 0$.
\end{defn}
Therefore to understand the metric structure of a quantum graph, we need a better understanding of the powers $V^k$. We will relate them in some way to the powers of the adjacency matrix, so we need to be able to compose the adjacency matrix with itself, which is possible for quantum graphs of bounded degree\footnote{It is also possible for locally finite quantum graphs, but then the Choi matrix associated with the power of the adjacency matrix is only \emph{affiliated} with $\op{B}\overline{\otimes} \op{B}^{\op{op}}$ and we decided to avoid these technicalities.}.
\begin{lem}\label{lem:poweradj}
Let $A: \op{B} \to \op{B}$ be a quantum adjacency matrix of bounded degree, let $P \in \op{B}\overline{\otimes} \op{B}^{\op{op}}$ be its associated projection, and let $V$ be the corresponding $\op{B}'$ bimodule. Let $A_k \in \op{B}\overline{\otimes} \op{B}^{\op{op}}$ be the generalized Choi matrix of $A^k$. Then the range projection $[A_k]$ corresponds to the $\op{B}'$-bimodule $V^k$.
\end{lem}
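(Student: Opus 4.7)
\medskip

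\noindent\textbf{Proof plan.} The strategy is to exploit the explicit Kraus-type representation of $A$ obtained in Proposition \ref{Prop:Adjacencykraus} and iterate it. Fix a KMS-orthonormal basis $\left(X_i^{\alpha\beta}\right)_i$ of each block $V_{\alpha\beta}$ of the $\op{B}'$-bimodule $V$. Then the formula \eqref{Eq:Adjacencykraus} expresses $A$ as a sum of conjugations by the operators $K_i^{\alpha\beta} := \rho_\beta^{-\frac14} X_i^{\alpha\beta} \rho_\alpha^{\frac14}$, up to scalar factors depending only on $\alpha,\beta$. In other words, $A$ admits a Kraus decomposition whose Kraus operators form a set that, after undoing the modular conjugation by $\sigma^{\psi^{-1}}_{i/4}$, spans $V$. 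This is precisely the content of the bimodule-projection correspondence of Theorem \ref{Thm:projbimod} interpreted on the level of Kraus operators.

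The second step is an iteration. Since composition of CP maps multiplies their Kraus operators, $A^k$ has a Kraus decomposition whose operators are all possible products
\[
K_{i_1}^{\alpha_1\alpha_2}\, K_{i_2}^{\alpha_2\alpha_3}\cdots K_{i_k}^{\alpha_k\alpha_{k+1}}
\]
along compatible chains of blocks. Expanding out $K_i^{\alpha\beta} = \rho_\beta^{-1/4} X_i^{\alpha\beta} \rho_\alpha^{1/4}$ one sees that the middle factors of $\rho_{\alpha_j}^{1/4}\rho_{\alpha_j}^{-1/4} = \mathds{1}$ telescope, leaving a product of $X$'s flanked by a single $\rho_{\alpha_{k+1}}^{-1/4}$ on the left and $\rho_{\alpha_1}^{1/4}$ on the right; that is, each such Kraus operator of $A^k$ is again the same modular twist applied to a product $X_{i_1}^{\alpha_1\alpha_2}\cdots X_{i_k}^{\alpha_k\alpha_{k+1}}$. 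The linear span of these $k$-fold products, as the indices vary, is by definition a generating set for the $\op{B}'$-bimodule product $V^k$, and its weak$^\ast$ closure equals $V^k$.

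The final step identifies the range projection $[A_k]$ with the projection associated to $V^k$. Via the isomorphism $\Phi$ of Proposition \ref{Prop:Bimodproj}, the positive element $A_k = \Psi^{\op{KMS}}(A^k) \in \op{B}\otimes \op{B}^{\op{op}}$ acts on $\op{B}(\HH)$ as a normal completely positive $\op{B}'$-bimodular map. From the Kraus description of $A^k$ assembled above, the range of this bimodular map (as a subspace of $\op{B}(\HH)$) is exactly the weak$^\ast$-closed $\op{B}'$-bimodule $\sigma_{-i/4}^{\psi^{-1}}(V^k)$. Since $\Phi$ is a $\ast$-isomorphism of von Neumann algebras, it sends $[A_k]$ to the $\op{B}'$-bimodular projection onto this range, and invoking Theorem \ref{Thm:projbimod} one more time, the corresponding bimodule is $\sigma_{i/4}^{\psi^{-1}}\sigma_{-i/4}^{\psi^{-1}}(V^k) = V^k$, which is the desired conclusion.

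The main obstacle is precisely the bookkeeping of the modular group twists: the KMS-orthonormality of the $X_i^{\alpha\beta}$'s involves a $\sigma_{-i/4}^{\psi^{-1}}$-twist, the Kraus operators $K_i^{\alpha\beta}$ add further $\rho^{\pm 1/4}$ factors, and one must verify that after iterated composition the remaining twists collapse correctly so that the span of products of Kraus operators is in fact (a modular twist of) $V^k$ rather than some spurious twisted variant. This cancellation is driven by the fact that the modular automorphism $\sigma_t^{\psi^{-1}}$ is $\op{B}'$-bimodular and multiplicative, which is exactly what is needed for it to intertwine formation of the product bimodule $V^k$ with conjugation of the Kraus operators by the density matrices.
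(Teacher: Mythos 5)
Your proof is correct and follows essentially the same route as the paper: identify the Kraus operators of $A^k$ as telescoped products of (modular twists of) a KMS-orthonormal basis of $V$, and then identify the range projection of the generalized Choi matrix with the projection onto the span of those Kraus operators. The one step you assert rather than prove --- that the range of the bimodular map $\Phi(A_k)$ is exactly (the modular twist of) the span of the Kraus operators of $A^k$ --- is precisely where the paper's proof does its work, by diagonalizing the generalized Choi matrix, observing that the support projection is obtained by setting the nonzero eigenvalues to $1$, and invoking the independence of the span of a Kraus decomposition from the choice of Kraus operators.
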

\begin{proof}
Since $\op{B} \overline{\otimes} \op{B}^{\op{op}} \simeq \ell^{\infty}-\bigoplus_{\alpha, \beta} M_{n_{\beta}} \otimes M_{n_{\alpha}}^{\op{op}}$, it suffices to check that the range projection $[A_{k}]_{\beta\alpha}$ corresponds to $V^{k}_{\alpha\beta} \subset \op{B}(\C^{n_{\alpha}}, \C^{n_{\beta}})$. 

We will first work in the tracial setting, and then discuss how to modify the proof in the general case. We are in the following situation: we have a completely positive map $\Phi(x):= \sum_{k=1}^{n} V_{k} x V_{k}^{\ast}$ and we would like to check that the support projection of the Choi matrix of $\Phi$ gives rise to the orthogonal projection onto the span of $V_{k}$'s inside $\op{B}(\C^{n_{\alpha}}, \C^{n_{\beta}})$; indeed, the Kraus operators of $A^{k}_{\alpha\beta}$ are the products belonging to $V^{k}_{\alpha\beta}$ (see \eqref{Eq:Adjacencykraus} and note that the sums appearing will be finite by Proposition \ref{Prop:locallyfinite}). 

Let $T$ be the Choi matrix of $\Phi$. As a positive semidefinite matrix it can be diagonalized and written in the form $T = \frac{1}{n_{\alpha}}\sum_{l=1}^{d} \lambda_{l} T_{l} e_{ij}^{\alpha} T_{l}^{\ast} \otimes (e_{ji}^{\alpha})^{\op{op}}$, where $T_{l}: \C^{n_{\alpha}}\to \C^{n_{\beta}}$ satisfy $\frac{1}{n_{\alpha}} \op{Tr} (T_{k}^{\ast} T_{l}) = \delta_{kl}$, $d$ is the rank of $T$, and $\lambda_l$'s are the non-zero eigenvalues. The support projection of $T$ is equal to $[T]= \frac{1}{n_{\alpha}}\sum_{l=1}^{d} T_{l} e_{ij}^{\alpha} T_{l}^{\ast} \otimes (e_{ji}^{\alpha})^{\op{op}}$, i.e. by setting all the non-zero eigenvalues to be equal to $1$. The corresponding subspace $V_{\alpha\beta}$ is the span of the $T_{l}$'s. Luckily it is a well-known fact that the span of Kraus operators is independent of any choices hence $\op{span}\{T_{l}: l \in [d]\} = \op{span}\{V_k: k \in [n]\}$. This finishes the proof of the lemma.

Let us comment now, what changes in the non-tracial setting. Combining Lemma \ref{Lem:choirep} with Lemma \ref{Lem:modularidem} we see that the generalised Choi matrix  of $\Phi$ is equal to $\frac{1}{\op{Tr}(\rho_{\alpha}^{-1})} \sum_{i,j} V_{k} \rho_{\alpha}^{-\frac{1}{2}} e_{ij}^{\alpha} \rho_{\alpha}^{-\frac{1}{2}} V_{k}^{\ast} \otimes e_{ji}^{\alpha}$. Once again we diagonalize it to obtain $(T_{1},\dots, T_{d})$ with $\frac{1}{n_{\alpha}} \op{Tr}(T_{k}^{\ast} T_{l}) = \delta_{kl}$. Define $\widetilde{T_{l}}:= \frac{\sqrt{\op{Tr}(\rho_{\alpha}^{-1})}}{\sqrt{n_{\alpha}}} T_{l} \rho_{\alpha}^{\frac{1}{2}}$. Then the generalised Choi matrix is equal to 
\[
\frac{1}{\op{Tr}(\rho_{\alpha}^{-1})} \sum_{l,i,j} \lambda_{l} \widetilde{T_{l}} \rho_{\alpha}^{-\frac{1}{2}} e_{ij}^{\alpha} \rho_{\alpha}^{-\frac{1}{2}} \widetilde{T_{l}}^{\ast} \otimes e_{ji}^{\alpha},
\]
where $\frac{1}{\op{Tr}(\rho_{\alpha}^{-1})} \op{Tr} (\rho_{\alpha}^{-1} \widetilde{T_k}^{\ast} \widetilde{T_l}) = \delta_{kl}$, from which it follows that the support projection is once again obtained by setting all $\lambda_l$'s to $1$.
\end{proof}
\begin{thm}\label{Thm:bilipschitz}
Let $\bGamma$ be a discrete quantum group and let $(\bGamma, P_1)$ and $(\bGamma, P_2)$ be its two quantum Cayley graphs. Then the corresponding quantum metric spaces are bi-Lipschitz equivalent.
\end{thm}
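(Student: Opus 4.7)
The plan is to prove bi-Lipschitz equivalence by a symmetric argument: given the two generating projections $P_{1},P_{2}\in c_{00}(\bGamma)$, I exhibit a constant $M\in\mathbb{N}$ such that $V_{t}^{(1)}\subset V_{Mt}^{(2)}$ for every $t\geqslant 0$, where $V_{t}^{(i)}$ is the quantum metric built from $A_{P_{i}}x:=P_{i}\ast x$; the reverse containment follows by exchanging the roles of $P_{1}$ and $P_{2}$. Using the semigroup property $V_{s}V_{t}\subset V_{s+t}$ of any quantum metric iteratively, this reduces to a single-step containment $V^{(P_{1})}\subset V_{M}^{(P_{2})}$, since then $(V^{(P_{1})})^{k}\subset (V_{M}^{(P_{2})})^{k}\subset V_{kM}^{(P_{2})}$ and hence $V_{t}^{(1)}\subset V_{Mt}^{(2)}$ after taking spans.

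To locate $M$, I use that $P_{2}$ generates: the projections $R_{n}:=\bigvee_{k=1}^{n}[P_{2}^{\ast k}]$ increase to $\mathds{1}$, and for each $\alpha$ in the finite central support $S_{1}\subset\op{Irr}(\G)$ of $P_{1}$, the restrictions $(R_{n})_{\alpha}$ form an increasing sequence of projections in the finite-dimensional matrix algebra $M_{n_{\alpha}}$ reaching $\mathds{1}_{\alpha}$ at some finite stage. Taking $M$ to be the largest such stage over $\alpha\in S_{1}$, the projection $R:=R_{M}$ dominates $z(P_{1})$, and hence $P_{1}$.

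The technical heart of the argument is the following claim: for every positive $Q\in c_{00}(\bGamma)$, the bimodule $V^{(Q)}$ associated to the range projection of the Choi matrix of $A_{Q}:=Q\ast\cdot$ depends only on $[Q]$, that is, $V^{(Q)}=V^{([Q])}$. Because $Q$ is block-diagonal with each non-zero block in a finite-dimensional matrix algebra, one has the operator sandwich $\varepsilon[Q]\leqslant Q\leqslant \|Q\|[Q]$ for some $\varepsilon>0$. Convolution by a positive element is completely positive, since $A_{Q}=(\phi_{Q}\otimes\op{Id})\circ\Delta_{\bGamma}$ factors as the positive functional $\phi_{Q}:=\widehat{h_{R}}(\cdot\,Q)$ composed with a $\ast$-homomorphism; and $\Psi^{\op{KMS}}$ carries CP maps to positive elements by Proposition \ref{Prop:Generalizedchoi}(iii). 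Applying $\Psi^{\op{KMS}}$ to the sandwich and noting that $\Psi^{\op{KMS}}(A_{[Q]})$ is itself a projection by Proposition \ref{Prop:projectionadj}, one concludes $[\Psi^{\op{KMS}}(A_{Q})]=\Psi^{\op{KMS}}(A_{[Q]})$, which is the desired equality of bimodules. Combined with the linearity of $\Psi^{\op{KMS}}$ and the identity $[X+Y]=[X]\vee[Y]$ for positive $X,Y$, this also yields $V^{(Q_{1}+Q_{2})}=\overline{V^{(Q_{1})}+V^{(Q_{2})}}^{w^{\ast}}$ for positive $Q_{1},Q_{2}\in c_{00}(\bGamma)$.

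Assembly is then routine. Each $P_{2}^{\ast k}$ lies in $c_{00}(\bGamma)^{+}$, since Fourier identifies convolution on $\bGamma$ with multiplication in $\op{Pol}(\G)$, and $R=\bigl[\sum_{k=1}^{M}P_{2}^{\ast k}\bigr]$ dominates $P_{1}$, so
\[
V^{(P_{1})}\subset V^{(P_{1}+R)}=V^{([P_{1}+R])}=V^{(R)}=V^{\bigl(\sum_{k=1}^{M}P_{2}^{\ast k}\bigr)}=\overline{\sum_{k=1}^{M}(V^{(P_{2})})^{k}}^{w^{\ast}}\subset V_{M}^{(P_{2})},
\]
where the second-to-last equality uses Lemma \ref{lem:poweradj} (applicable because $A_{P_{2}}$ has bounded degree, as $A_{P_{2}}\mathds{1}=\widehat{h_{R}}(P_{2})\mathds{1}$). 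The main obstacle is the sandwich claim in the third paragraph; the remainder is bookkeeping with the lattice of weak$^{\ast}$ closed $\op{B}'$-bimodules.
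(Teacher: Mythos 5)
Your overall architecture coincides with the paper's: reduce to a single-step containment via the semigroup property, use finite-dimensionality of the blocks meeting the central support of $P_{1}$ to produce $M$ with $P_{1}\leqslant\bigvee_{k=1}^{M}[P_{2}^{\ast k}]$, and identify the bimodule of the convolution operator $A_{Q}$ with that of $A_{[Q]}$ (the paper does this by implicitly using that $Q\mapsto\Psi^{\op{KMS}}(A_{Q})$ is multiplicative for the ordinary product, via Proposition \ref{Prop:Schurproductconvolution}; your sandwich $\varepsilon[Q]\leqslant Q\leqslant\|Q\|[Q]$ is a legitimate alternative). However, two of your justifications are wrong as stated, even though the facts they are meant to establish are true.

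First, $\phi_{Q}:=\widehat{h_{R}}(\cdot\,Q)$ is \emph{not} a positive functional for a general positive $Q\in c_{00}(\bGamma)$: on a block $M_{n_{\alpha}}$ one has $\widehat{h_{R}}(|w\rangle\langle w|\,Q)=\op{Tr}(\rho_{\alpha})\langle w,Q\rho_{\alpha}w\rangle$, which is nonnegative for all $w$ only when $Q$ commutes with $\rho_{\alpha}$ — the paper itself stresses that the positive functional with density $x$ is $\varphi(\cdot\,\sigma_{-\frac{i}{2}}(x))$, not $\varphi(\cdot\,x)$. So your factorization argument for complete positivity of $A_{Q}$ collapses in the non-tracial case (which is the whole point of the paper). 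The repair is cheap: either decompose $Q=\sum_{i}\lambda_{i}p_{i}$ spectrally with $\lambda_{i}>0$ and $p_{i}$ projections and invoke Proposition \ref{Prop:projectionadj}, or use Proposition \ref{Prop:Schurproductconvolution} together with $(P\ast x)^{\ast}=P^{\ast}\ast x^{\ast}$ to see that $Q\mapsto\Psi^{\op{KMS}}(A_{Q})$ is a $\ast$-homomorphism for the ordinary product, whence $\Psi^{\op{KMS}}(A_{S^{\ast}S})=\Psi^{\op{KMS}}(A_{S})^{\ast}\Psi^{\op{KMS}}(A_{S})\geqslant 0$. Second, ``Fourier identifies convolution with multiplication'' does not yield $P_{2}^{\ast k}\geqslant 0$ in $\ell^{\infty}(\bGamma)$: the Fourier transform does not preserve positivity, and $\widehat{P_{2}}$ need not even be self-adjoint. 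What is true is that $A_{P_{2}}^{k}=A_{P_{2}^{\ast k}}$ is completely positive as a composition of CP maps, so $\Psi^{\op{KMS}}(A_{P_{2}^{\ast k}})\geqslant 0$; since $Q\mapsto\Psi^{\op{KMS}}(A_{Q})$ is injective (evaluate at the convolution unit $\delta_{\varepsilon}$) and restricts to a $\ast$-homomorphism of finite-dimensional $C^{\ast}$-algebras on the blocks supporting $P_{2}^{\ast k}$, it reflects positivity and $P_{2}^{\ast k}\geqslant 0$ follows. With these two repairs your chain of containments goes through and the proof matches the paper's.
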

\begin{proof}
Let $A_1 x:= P_1 \ast x$ and $A_2 x:= P_2 \ast x$. Note that the powers $A_1^k$ and $A_2^k$ can be computed using the convolution powers $P_1^{\ast k}$ and $P_2^{\ast k}$. From Proposition \ref{Prop:Schurproductconvolution} it follows that the support projection of the Choi matrix of $A_{i}^{k}$ is the Choi matrix of the convolution operator by $[P_{i}^{\ast k}]$. Since both projections are generating, there exists $M\in \mathbb{N}$ such that $P_{1}\leqslant \bigvee_{i=1}^{M} [P_{2}^{\ast i}]$ and $P_{2} \leqslant \bigvee_{i=1}^{M} [P_{1}^{\ast i}]$. The bi-Lipschitz equivalence follows easily.
\end{proof}
\subsection{Quantum Cayley graphs as quantum relations}
In this subsection we will describe the $\op{B}'$-bimodule corresponding to a covariant quantum adjacency matrix coming from a central projection, so $A: \op{B} \to \op{B}$ will be given by $Ax := \mathds{1}_{\gamma} \ast x$; for simplicity we will assume that $\gamma$ is irreducible.

In order to find $V_{\alpha\beta}$ we need to compute the part of the convolution $\mathbf{1}_{\gamma}\ast e_{ij}^{\alpha}$ belonging to $M_{n_{\beta}}$. We will assume that that the $\rho$ matrices are diagonal.
\begin{lem}
We have $(\mathbf{1}_{\gamma} \ast e_{ij}^{\alpha})_{\beta} =\frac{\op{dim}_{q}(\alpha)\op{dim}_{q}(\gamma)}{\op{dim}_q(\beta)} \sum_{k=1}^{n_{\gamma}}\sum_{l=1}^{m(\beta, \alpha\otimes \gamma)} V_{kl} e_{ij}^{\alpha} V_{kl}^{\ast}$, where $m(\beta,\alpha\otimes\gamma)$ is the multiplicity of $\beta$ inside $\alpha\otimes \gamma$, $V_{l}: \HH_{\beta} \to \HH_{\alpha}\otimes \HH_{\gamma}$ are morphisms, which are isometries with orthogonal ranges, and $V_{kl}(v):= V_{l}^{\ast}(v\otimes e_{k})$.
\end{lem}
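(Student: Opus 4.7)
The idea is to pass through the Pontryagin dual: by Section 4.1, convolution on $\bGamma$ corresponds (with reversed order) to multiplication of matrix coefficients in $\op{Pol}(\mathbb{G})$, and a product of matrix coefficients decomposes according to the irreducible summands of the tensor product $\alpha \otimes \gamma$. First, using Lemma \ref{Lem:FTmatrixelement} together with the identity $e_{ij}^{\alpha} = \rho_{\alpha,j}\, e_{ij}^{\alpha}\rho_{\alpha}^{-1}$ (valid since $\rho_{\alpha}$ is diagonal), I would compute
\[
\cF(e_{ij}^{\alpha}) = \rho_{\alpha,j}\op{dim}_{q}(\alpha)\, u_{ji}^{\alpha}, \qquad \cF(\mathds{1}_{\gamma}) = \op{dim}_{q}(\gamma)\sum_{k} \rho_{\gamma,k}\, u_{kk}^{\gamma}.
\]
The convolution-to-product rule $\widehat{\psi_{1}\ast \psi_{2}} = \widehat{\psi_{2}}\widehat{\psi_{1}}$ then gives
\[
\cF(\mathds{1}_{\gamma}\ast e_{ij}^{\alpha}) = \rho_{\alpha,j}\op{dim}_{q}(\alpha)\op{dim}_{q}(\gamma)\sum_{k}\rho_{\gamma,k}\, u_{ji}^{\alpha} u_{kk}^{\gamma}.
\]

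The second step is to decompose the product $u_{ji}^{\alpha}u_{kk}^{\gamma}$, which is the matrix coefficient of $\alpha\otimes \gamma$ at the pair $(e_{j}\otimes e_{k},\, e_{i}\otimes e_{k})$. Using the resolution of identity $\op{Id}_{\HH_{\alpha}\otimes \HH_{\gamma}} = \sum_{\beta}\sum_{l} V_{l} V_{l}^{\ast}$ coming from the isotypic decomposition, I would rewrite this product as a linear combination of the matrix coefficients $u_{pq}^{\beta}$. After recognising that $V_{kl}(v) = V_{l}^{\ast}(v \otimes e_{k})$ and that the resulting coefficient matrix of the $u_{pq}^{\beta}$ is, up to the transpose introduced by our convention for matrix coefficients, the rank-one operator $V_{kl} e_{ij}^{\alpha} V_{kl}^{\ast} = |V_{kl} e_{i}\rangle\langle V_{kl} e_{j}|$, applying $\cF^{-1}$ via $u_{pq}^{\beta}\mapsto \op{dim}_{q}(\beta)^{-1}e_{qp}^{\beta}\rho_{\beta}^{-1}$ assembles the $\beta$-block into
\[
(\mathds{1}_{\gamma}\ast e_{ij}^{\alpha})_{\beta} = \frac{\op{dim}_{q}(\alpha)\op{dim}_{q}(\gamma)}{\op{dim}_{q}(\beta)}\sum_{k,l} \rho_{\alpha,j}\rho_{\gamma,k}\, V_{kl} e_{ij}^{\alpha} V_{kl}^{\ast}\, \rho_{\beta}^{-1}.
\]

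The final step is to dispose of the stray $\rho$-factors, and this is where the morphism property of $V_{l}$ is used. As an intertwiner between $\beta$ and a subrepresentation of $\alpha \otimes \gamma$, $V_{l}$ automatically intertwines the associated $\rho$-matrices, i.e.\ $V_{l}\rho_{\beta} = (\rho_{\alpha}\otimes \rho_{\gamma})V_{l}$. Taking adjoints and evaluating at $e_{j}\otimes e_{k}$ (using diagonality of $\rho_{\alpha}$ and $\rho_{\gamma}$) yields $\rho_{\beta}\, V_{kl}(e_{j}) = \rho_{\alpha,j}\rho_{\gamma,k}\, V_{kl}(e_{j})$, so pulling $\rho_{\beta}^{-1}$ through the rank-one operator $|V_{kl}e_{i}\rangle\langle V_{kl}e_{j}|$ produces precisely the scalar $\rho_{\alpha,j}^{-1}\rho_{\gamma,k}^{-1}$ that cancels the prefactor. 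The only step that is not a routine check is the bookkeeping identifying the coefficient matrix in the expansion of $u_{ji}^{\alpha}u_{kk}^{\gamma}$ with the rank-one operator $V_{kl}e_{ij}^{\alpha}V_{kl}^{\ast}$ appearing in the statement; once that matching is in place, the Woronowicz compatibility of morphisms with the $\rho$-matrices produces the precise cancellation needed to arrive at the stated formula.
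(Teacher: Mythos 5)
Your proposal is correct and follows essentially the same route as the paper: Fourier transform both factors, use the order-reversing convolution-to-product rule, expand $u_{ji}^{\alpha}u_{kk}^{\gamma}$ via the isotypic decomposition of $\alpha\otimes\gamma$ (the paper cites \cite[Lemma 4.1]{MR3871830} for exactly the resolution-of-identity computation you sketch), invert the Fourier transform to recognise the rank-one operators $|V_{kl}e_i\rangle\langle V_{kl}e_j|$, and cancel the stray $\rho$-factors using $V_l\rho_\beta=(\rho_\alpha\otimes\rho_\gamma)V_l$ together with $\rho_{\alpha\otimes\gamma}=\rho_\alpha\otimes\rho_\gamma$. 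The bookkeeping you flag as the only delicate point is handled in the paper in exactly the way you describe, so no changes are needed.
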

\begin{proof}
By Lemma \ref{Lem:FTmatrixelement} we have
\[
\widehat{\mathbf{1}_{\gamma}} = \op{dim}_q(\gamma) \sum_{k=1}^{n_{\gamma}}\rho_{\gamma k} u_{kk}^{\gamma}
\]
and
\[
\widehat{e_{ij}^{\alpha}} = \op{dim}_q(\alpha) \rho_{\alpha j} u_{ji}^{\alpha}.
\]
What we need now is a formula for a product of two matrix coefficients and this is exactly \cite[Lemma 4.1]{MR3871830}:
\[
(u_{ji}^{\alpha} u_{kk}^{\gamma})_{\beta} = \sum_{l=1}^{m(\beta, \alpha\otimes \gamma)} \sum_{p,q} \langle e_{j} \otimes e_{k}, Ve_{p}\rangle u_{pq}^{\beta} \langle V e_{q}, e_{i}\otimes e_{k}\rangle.
\]
\end{proof}
Inverting the Fourier transform, we obtain 
\[
(\mathbf{1}_{\gamma}\ast e_{ij}^{\alpha})_{\beta} = \frac{\op{dim}_{q}(\alpha)\op{dim}_{q}(\gamma)}{\op{dim}_q(\beta)} \sum_{k=1}^{n_{\gamma}}\rho_{\gamma k} \rho_{\alpha j} \sum_{l=1}^{m(\beta, \alpha\otimes \gamma)} \sum_{p,q} \langle e_{j} \otimes e_{k}, V_l e_{p}\rangle e_{qp}^{\beta}\rho_{\beta p}^{-1} \langle V_l e_{q}, e_{i}\otimes e_{k}\rangle.
\]
Recall that $e_{qp}^{\beta}= |e_{q}\rangle\langle e_{p}|$, so using the equalities $\op{Id}_{\beta} = \sum_{p} |e_{p}\rangle \langle e_{p}|$, we arrive at
\[
(\mathbf{1}_{\gamma}\ast e_{ij}^{\alpha})_{\beta} = \frac{\op{dim}_{q}(\alpha)\op{dim}_{q}(\gamma)}{\op{dim}_q(\beta)} \sum_{k=1}^{n_{\gamma}}\rho_{\gamma k}\rho_{\alpha j}\sum_{l=1}^{m(\beta,\alpha\otimes \gamma)} |V_{l}^{\ast} (e_{i}\otimes e_{k})\rangle \langle V^{\ast}(e_{j}\otimes e_{k})| \rho_{\beta}^{-1}.
\]
We can further rewrite it as
\[
(\mathbf{1}_{\gamma}\ast e_{ij}^{\alpha})_{\beta} = \frac{\op{dim}_{q}(\alpha)\op{dim}_{q}(\gamma)}{\op{dim}_q(\beta)} \sum_{k=1}^{n_{\gamma}}\rho_{\gamma k}\sum_{l=1}^{m(\beta,\alpha\otimes\gamma)} V_{kl} e_{ij}^{\alpha} \rho_{\alpha} V_{kl}^{\ast} \rho_{\beta}^{-1}.
\]
The last thing to check is that $ V_{kl} \rho_{\gamma k} \rho_{\alpha} = \rho_{\beta} V_{kl}$. Recall $V_{kl}$ is a composition of two maps, an inclusion $\iota_{k}(v):= v\otimes e_{k}$ and a morphism $V_{l}^{\ast}: \HH_{\alpha}\otimes \HH_{\gamma} \to \HH_{\beta}$. It is clear that $\iota_{k} \rho_{\alpha} \rho_{\gamma k}= (\rho_{\alpha}\otimes \rho_{\gamma}) \iota_{k}$. It is not difficult to check that for two representations $X$ and $Y$ and an intertwiner $V \in \op{Mor}(X,Y)$ we have $V \rho_{X} = \rho_{Y} V$ because by Schur's lemma it is sufficient to check it when $X$ and $Y$ are multiples of the same irreducible representation $\alpha$ and for them the $\rho$ operator is merely a direct sum of copies of $\rho_{\alpha}$. By \cite[Theorem 1.4.9]{MR3204665} $\rho_{\alpha \otimes \gamma} = \rho_{\alpha} \otimes \rho_{\gamma}$, so we obtain $V_{l}^{\ast} (\rho_{\alpha}\otimes \rho_{\gamma}) = \rho_{\beta} V_{l}^{\ast}$. It follows that $ V_{l}^{\ast}\iota_{k} \rho_{\gamma k} \rho_{\alpha} = \rho_{\beta} V_{l}^{\ast}\iota_{k}$.
\begin{prop}
Let $\bGamma$ be a discrete quantum group and let $S$ be a generating set of representations of $\mathbb{G}$. Then the bimodule describing the quantum Cayley graph is given by
\[
V_{\alpha\beta} = \op{span}\{V_{s}^{\ast}(\iota_s(v)): s \in S, V_{s} \in \op{Mor}(\beta, \alpha\otimes s)\}, v\in \HH_{s}\}
\]
where $\iota_{s}(v): \HH_{\alpha} \to \HH_{\alpha} \otimes \HH_{s}$ is given by $\iota_{s}(v)(w):= w\otimes v$.
\end{prop}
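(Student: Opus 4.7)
The plan builds directly on the formula
\[
(\mathbf{1}_{\gamma}\ast e_{ij}^{\alpha})_{\beta} = \frac{\dim_{q}(\alpha)\dim_{q}(\gamma)}{\dim_q(\beta)} \sum_{k,l} \rho_{\gamma k} V_{kl} e_{ij}^{\alpha} \rho_{\alpha} V_{kl}^{\ast} \rho_{\beta}^{-1}
\]
and the intertwining relation $\rho_\beta V_{kl} = \rho_{\gamma k}V_{kl}\rho_\alpha$ displayed just before the statement. The idea is to turn this expression into a bona fide Kraus decomposition, then read off $V_{\alpha\beta}$ by comparing with formula \eqref{Eq:Adjacencykraus} and using that the span of Kraus operators is a well-defined invariant of a CP map (as recalled in the proof of Lemma \ref{lem:poweradj}).

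First, taking the adjoint of the intertwining yields $\rho_\alpha V_{kl}^{\ast} = \rho_{\gamma k}^{-1}V_{kl}^{\ast}\rho_\beta$. Plugging this into the formula cancels the trailing $\rho_\beta^{-1}$, and the resulting scalar $\rho_{\gamma k}^{-1}$ absorbs the outer $\rho_{\gamma k}$, producing the clean expression
\[
(\mathbf{1}_\gamma \ast e_{ij}^\alpha)_\beta = \frac{\dim_q(\alpha)\dim_q(\gamma)}{\dim_q(\beta)} \sum_{k,l} V_{kl} e_{ij}^\alpha V_{kl}^{\ast}.
\]
This exhibits $V_{kl} = V_l^{\ast}\iota_k$ as a (rescaled) family of Kraus operators for the $\alpha\to\beta$ component of $A$.

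Second, Proposition \ref{Prop:Adjacencykraus} furnishes another Kraus decomposition of the same component, with operators $\rho_\beta^{-1/4}X_m\rho_\alpha^{1/4}$, where $\{X_m\}$ is a KMS-orthonormal basis of $V_{\alpha\beta}$. Uniqueness of the linear span of Kraus operators then forces
\[
V_{\alpha\beta} = \op{span}\{\rho_\beta^{1/4}V_{kl}\rho_\alpha^{-1/4}:k,l\}.
\]

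The final step is to verify that the $\rho$-twist is inessential. Extending the intertwining $V_l^{\ast}(\rho_\alpha\otimes\rho_\gamma) = \rho_\beta V_l^{\ast}$ to analytic powers by functional calculus and evaluating on $w\otimes e_k$ gives $\rho_\beta^{1/4}V_{kl}\rho_\alpha^{-1/4} = \rho_{\gamma k}^{1/4}V_{kl}$; since each $\rho_{\gamma k}^{1/4}$ is a nonzero scalar, the span is unchanged, proving the proposition when $S=\{\gamma\}$ is a singleton. A general generating central projection decomposes as $P = \sum_{s\in S}\mathbf{1}_s$, so linearity of convolution makes the total bimodule $V_{\alpha\beta}$ the sum of the single-representation pieces, yielding the claimed formula. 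The only non-routine point is this $\rho$-twist calculation: it hinges on analytically continuing the intertwining property of the Clebsch--Gordan morphisms, which relies crucially on the compatibility $\rho_{\alpha\otimes\gamma}=\rho_\alpha\otimes\rho_\gamma$ recorded in \cite[Theorem 1.4.9]{MR3204665}.
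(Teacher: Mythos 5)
Your proof is correct and follows essentially the same route as the paper: pass to the clean Kraus form of $(\mathbf{1}_{\gamma}\ast\,\cdot\,)_{\beta}$ via the intertwining relation, compare with the Kraus decomposition in \eqref{Eq:Adjacencykraus}, and invoke uniqueness of the span of Kraus operators. The only place you diverge is in disposing of the residual $\rho_{\beta}^{1/4}(\cdot)\rho_{\alpha}^{-1/4}$ twist: the paper argues abstractly that convolution against a central projection commutes with the modular group, so the bimodule satisfies $\sigma_{-\frac{i}{4}}^{\psi^{-1}}(V)=V$ and the twist is invisible, whereas you verify this concretely by analytically continuing the intertwiner relation to fourth powers and showing $\rho_{\beta}^{1/4}V_{kl}\rho_{\alpha}^{-1/4}=\rho_{\gamma k}^{1/4}V_{kl}$ is a scalar multiple of $V_{kl}$. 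Your computation is a legitimate (and slightly more self-contained) substitute; it amounts to an explicit proof of the modular invariance that the paper asserts conceptually. The final reduction from a single irreducible to a general central generating projection by linearity matches the paper.
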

\begin{proof}
From the previous lemma it follows that the span of the Kraus operators of the convolution operator against $\mathbf{1}_{\gamma}$, or rather the block mapping from $M_{n_{\alpha}}$ to $M_{n_{\beta}}$ is equal to
\[
\op{span}\{V^{\ast}(\iota(v)): V \in \op{Mor}(\beta, \alpha\otimes s)\}, v\in \HH_{\gamma}\}.
\] 
From the formula \ref{Eq:Adjacencykraus} it is clear that this span is equal to $(\sigma_{-\frac{i}{4}}^{\psi^{-1}}(V))_{\alpha\beta}$, rather than $V_{\alpha\beta}$. However, since we convolve against a central projection, the adjacency matrix commutes with the modular group, hence the corresponding bimodule will be invariant under the modular group, so $\sigma_{-\frac{i}{4}}^{\psi^{-1}}(V) = V$.

In general we convolve against a central projection corresponding to a direct sum of irreducible representations and we obtain the result by linearity.
\end{proof}
\subsection{A sample application: discrete quantum groups of subexponential growth are amenable}

Many algebraic properties of groups are encoded in the geometry of their Cayley graphs. Here we would like to present an indication that the same should hold for discrete \emph{quantum} groups. We will use the result from \cite{MR2502497} to show that subexponential growth of the Cayley graph implies amenability of the discrete quantum group (actually the result even shows that the dual compact quantum group is coamenable, which is formally a stronger property, but actually equivalent by \cite{MR2276175}).
\begin{defn}
Let $\bGamma$ be a discrete quantum group. We say that $\bGamma$ is of subexponential growth if there exists a generating projection $P \in c_{00}(\bGamma)$ with $\varepsilon(P)=1$ such that $\lim_{n\to\infty} \left( \widehat{h_{R}}([P^{\ast n}])\right)^{\frac{1}{n}} = 1$.
\end{defn}
\begin{rem}
We assume that the counit applied to $P$ is equal to $1$ because we want to measure the volume of the ball, so we want $[P^{\ast n}]$ to correspond to the set of words of length at most $n$, not exactly $n$.
\end{rem}
\begin{prop}
Suppose that $\bGamma$ is a discrete quantum group of subexponential growth. Then $\bGamma$ is amenable.
\end{prop}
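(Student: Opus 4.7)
The plan follows the classical argument: derive an asymptotically invariant sequence from the balls of the Cayley graph by pigeonhole, and then appeal to a F\o{}lner-type (or Kesten-type) characterization of amenability for discrete quantum groups from \cite{MR2502497}. Equivalently, one obtains coamenability of $\mathbb{G}$, which by \cite{MR2276175} is the same as amenability of $\bGamma$.

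First, I would set $q_n := [P^{\ast n}] \in \ell^{\infty}(\bGamma)$ and $b_n := \widehat{h_R}(q_n)$, the ``volume of the ball of radius $n$''. The condition $\varepsilon(P) = 1$ means that the trivial-representation block of $P$ is the unit of that block, i.e.\ $P$ contains the neutral element of $\bGamma$ in its support. An easy computation using the comultiplication shows that $P^{\ast (n+1)}$ dominates a positive multiple of $P^{\ast n}$, hence $q_{n+1} \geqslant q_n$ and $(b_n)$ is non-decreasing with $b_n^{1/n}\to 1$ by hypothesis.

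Second, I would invoke the standard pigeonhole: if $b_n^{1/n}\to 1$, then for every $\varepsilon>0$ the set $\{n: b_{n+1}/b_n < 1+\varepsilon\}$ must be infinite, since otherwise $b_n\geqslant C(1+\varepsilon)^n$ for large $n$, contradicting subexponential growth. Along such a subsequence, $q_n$ becomes an approximate F\o{}lner set for the one-step walk given by left convolution with $P$: since $P\ast q_n$ is supported under $q_{n+1}$, the excess mass after one step is controlled by $b_{n+1}-b_n < \varepsilon b_n$.

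Third, I would normalize $\mu_n:=q_n/b_n$ (so that $\widehat{h_R}(\mu_n)=1$) and compare $P\ast \mu_n$ with $\widehat{h_R}(P)\mu_n$. The difference has $L^1$-norm (with respect to $\widehat{h_R}$) bounded by a constant multiple of $(b_{n+1}-b_n)/b_n$, which tends to $0$ along the good subsequence. This near-invariance of $(\mu_n)$ under left convolution by $P$ is exactly the quantum F\o{}lner condition needed to apply the characterization of amenability from \cite{MR2502497}; combined with the fact that $P$ is generating, the cited result then delivers amenability of $\bGamma$.

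The main obstacle is Step three: precisely formulating and verifying the quantum F\o{}lner condition at the level of the support projections $q_n$. One must show that $P\ast q_n \leqslant C\, q_{n+1}$ for a uniform constant $C$ (so that the leakage after one step is genuinely controlled by the ratio $b_{n+1}/b_n$), and check that the modular twists coming from the $\rho_{\alpha}$'s (present because $\bGamma$ need not be unimodular) do not destroy the near-invariance. Here the hypothesis that $P$ is $R$-invariant is essential: it places the Cayley graph in the KMS-symmetric regime treated in Section \ref{Sec:covqadj}, which is precisely the regime in which the F\o{}lner/Kesten criterion of \cite{MR2502497} is formulated.
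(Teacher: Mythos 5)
Your overall strategy (subexponential volume growth $\Rightarrow$ pigeonhole $\Rightarrow$ F\o{}lner criterion of \cite{MR2502497} $\Rightarrow$ coamenability $\Rightarrow$ amenability via \cite{MR2276175}) is the same as the paper's, but the decisive step is left unverified and, as set up, does not match the criterion you cite. The F\o{}lner condition of \cite{MR2502497} quoted in the paper is combinatorial: for every finitely supported \emph{symmetric} probability measure $\mu$ on $\op{Irr}(\mathbb{G})$ containing the trivial representation there must exist a finite $F\subset \op{Irr}(\mathbb{G})$ with $\sum_{\alpha\in\op{supp}(\mu\ast\mathbf{1}_F)} n_{\alpha}^2\leqslant(1+\varepsilon)\sum_{\alpha\in F}n_{\alpha}^2$, where $n_{\alpha}$ are the \emph{classical} dimensions. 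Your proposed near-invariance of $\mu_n=q_n/b_n$ in $L^1(\widehat{h_R})$ is a different statement: the weight $\widehat{h_R}$ counts blocks with quantum dimensions $\op{Tr}(\rho_\alpha)\op{Tr}(\rho_\alpha^{-1})$, and only the one-sided inequality $n_\alpha^2\leqslant \op{Tr}(\rho_\alpha)\op{Tr}(\rho_\alpha^{-1})$ is available. Consequently your pigeonhole on $b_n=\widehat{h_R}(q_n)$ controls ratios of quantum volumes but gives no control on the ratios $a_{k+n}/a_n$ of classical-dimension volumes that the criterion actually requires. The paper's fix is to pass to the central cover $z(P)$ (still generating, still in $c_{00}(\bGamma)$), so that the balls become honest subsets $S^n\subset\op{Irr}(\mathbb{G})$, to deduce $a_n:=\sum_{\alpha\in S^n}n_\alpha^2\leqslant \widehat{h_R}([z(P)^{\ast n}])$ hence $a_n^{1/n}\to 1$, and then to run the pigeonhole on the sequence $a_n$ itself.

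A second gap: the criterion is quantified over \emph{all} admissible measures $\mu$, whereas your argument only produces approximate invariance under the single convolution operator $x\mapsto P\ast x$. One needs the additional swallowing step: since $S$ is generating, $\op{supp}(\mu)\subset S^k$ for some $k$, hence $\op{supp}(\mu\ast\mathbf{1}_{S^n})\subset S^{k+n}$, and the F\o{}lner sets $F=S^n$ work simultaneously for every $\mu$ once $a_{k+n}\leqslant(1+\varepsilon)a_n$ for some $n$. Your unproven technical claims ($P\ast q_n\leqslant C\,q_{n+1}$, the modular twists not spoiling the estimate) become unnecessary once the argument is routed through the central cover and classical dimensions; conversely, without that rerouting they would still not connect your $L^1$ estimate to the cited theorem. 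Finally, you should also record the independence of subexponential growth from the choice of $P$ (via Theorem \ref{Thm:bilipschitz}), which justifies replacing $P$ by $z(P)$ in the first place.
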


\begin{proof}
First we explain why the notion of subexponential growth is independent of the choice of a generating projection. Take two generating projection $P$ and $Q$ and assume we have subexponential growth with respect to $P$. Call $V_{n}$ the quantum relations corresponding to $[P^{\ast n}]$ and $W_n$ the ones corresponding to $[Q^{\ast n}]$. By Theorem \ref{Thm:bilipschitz} there exists $M\in \mathbb{N}$ such that $ V_{n} \subset W_{Mn}$ and $W_{n} \subset V_{Mn}$. Since $\widehat{h_{R}}([P^{\ast n}])\mathds{1}$ is the degree matrix of $V_n$, we obtain 
\begin{align*}
\widehat{h_{R}}([P^{\ast n}]) &\leqslant \widehat{h_{R}}([Q^{\ast M n}]) \\
\widehat{h_{R}}([Q^{\ast n}]) &\leqslant \widehat{h_{R}}([P^{\ast M n}]).
\end{align*}
It follows that if $\lim_{n\to\infty} \left( \widehat{h_{R}}([P^{\ast n}])\right)^{\frac{1}{n}} = 1$ then $\lim_{n\to\infty} \left( \widehat{h_{R}}([Q^{\ast n}])\right)^{\frac{1}{n}} = 1$.

If $P\in c_{00}(\bGamma)$ is generating then its central cover $z(P) \in c_{00}(\bGamma)$ is generating as well, hence we can assume that we are working with a generating set $S$ of representations, including the trivial one (see Proposition \ref{Prop:generatingrep}). 

According to \cite[Theorem 3.3 and Theorem 4.5]{MR2502497} coamenability of $\mathbb{G}$ is equivalent to the following F{\o}lner condition: for every finitely supported, symmetric probability measure $\mu$ on $\op{Irr}(\mathbb{G})$ whose support contains the trivial representation and every $\varepsilon > 0$ there exists a finite subset $F \subset \op{Irr}(\mathbb{G})$ such that
\[
\sum_{\alpha \in \op{supp}(\mu \ast \mathbf{1}_{F})} n_{\alpha}^2 \leqslant (1+\varepsilon) \sum_{\alpha \in F} n_{\alpha}^2,
\]
where $n_{\alpha}$ is the dimension of the representation $\alpha$.

Note the inequality $n_{\alpha}^2 \leqslant \op{Tr}(\rho_{\alpha}) \op{Tr}(\rho_{\alpha}^{-1})$, from which it follows that for any central projection $z \in c_{00}(\bGamma)$ corresponding to a family $T$ of irreducible representations we have $\sum_{\alpha \in T} n_{\alpha}^2 \leqslant \widehat{h_{R}}(z)$.

Let $S^k$ denote the set of irreducible representations of $\mathbb{G}$ that appear in $k$-fold tensor products of elements of $S$. It follows that the sequence $a_k:= \sum_{\alpha \in S^{k}} n_{\alpha}^2$ satisfies $\lim_{k\to\infty} (a_{k})^{\frac{1}{k}} = 1$. 

Fix $\mu$ as above and $\varepsilon >0$. For a large enough $k$ we will have $\op{supp}(\mu) \subset S^{k}$. Clearly $\op{supp}(\mu \ast \mathbf{1}_{S^{n}}) \subset S^{k+n}$, so it suffices to find $n$ large enough so that $a_{k+n} \leqslant (1+\varepsilon) a_{n}$. As $\lim_{k\to\infty} (a_{k})^{\frac{1}{k}} = 1$ we have $\lim_{n\to\infty} \frac{a_{k+n}}{a_{n}} = 1$, so we are done.

\end{proof}
In the non-unimodular case (e.g. for $SU_q(2)$) it often happens that the classical dimensions exhibit subexponential growth, while the quantum ones grow exponentially, so Kyed's can still be applied. It is however not clear at this point how to encode such a growth condition in a natural way using the quantum Cayley graphs.

\subsection{Examples}
Our first example will be constructed using the duals of the free unitary quantum groups introduced in \cite{MR1382726}. Recall that if $F \in M_{n}$ is an invertible matrix then $C(U_{F}^{+})$ is the universal $C^{\ast}$-algebra generated by the entries of a unitary matrix $U:= (u_{ij})_{i,j\in [n]}$ such that $F\overline{U}F^{-1}$ is also unitary, where $\overline{U}:= (u_{ij}^{\ast})_{i,j\in [n]}$; the comultiplication is given by the familiar formula $\Delta(u_{ij}) := \sum_{k=1}^{n} u_{ik}\otimes u_{kj}$. In \cite[Th\'{e}or\`{e}me]{MR1484551} the representation theory has been computed and we will need the following two facts about it: all irreducibles are generated by the fundamental representation $u$ and its conjugate $\overline{u}$ and the tensor powers $u^{\otimes n}$ are irreducible.
\begin{ex}
Let $U_{F}^{+}$ be a free unitary quantum group. Let $P:=\mathbf{1}_{u} + \mathbf{1}_{\overline{u}}$ be the central projection in $\ell^{\infty}(\widehat{U_{F}^{+}})$ corresponding to the representation $u\oplus \overline{u}$. $P$ is generating and we call the associated quantum graph the quantum Cayley graph of $\widehat{U_{F}^{+}}$. 
\end{ex}
Here we would like to give some indication that this quantum Cayley graph should be in fact a quantum tree. For now we do not have a notion of a tree for quantum graphs, but we we would like to nonetheless argue why the Cayley graph of $\widehat{U_{F}^{+}}$ should be called a tree. We can split the quantum adjacency matrix $A$ into a sum of two convolution operators $A_{1}(x):= \mathbf{1}_{u} \ast x$ and $A_{2}(x):= \mathbf{1}_{\overline{u}} \ast x$. Because $u^{\otimes n}$ are $\overline{u}^{\otimes n}$ are irreducible for each $n\in \mathbb{N}$, hence the convolution powers are easy to compute using the Fourier transform; we have that $A_{1}^{n}(x) = \mathbf{1}_{u^{\otimes n}} \ast x$ and analogously for $A_2$. Classically this would mean that the direct graphs defined by $A_1$ and $A_2$ have the following property: for any pair of vertices there is at most one directed path connecting them. This property is weaker than being a tree, but provides some evidence that this example might be a tree. Moreover, we have $m(A_{1}\otimes A_{2})m^{\ast} = 0$, which means that $A_{1}$ and $A_{2}$ have disjoint sets of edges, and $A_{2}$ is the KMS adjoint of $A_{1}$. Combining all this information, we see that $A(x):= P\ast x$ is built as a sum of superposing two opposite orientations, so perhaps could be an example of  an unoriented quantum tree.

The next example concerns the famous compact quantum group $SU_q(2)$ introduced by Woronowicz. The representation theory is the same as for $SU(2)$, so we get a two-dimensional, self-conjugate, generating representation $\pi$.
\begin{ex}
Let $\pi$ be the fundamental representation of $SU_q(2)$ and let $P$ be the corresponding central projection in $\ell^{\infty}(\widehat{SU_q(2)})$. The associated quantum graph is the quantum Cayley graph of $\widehat{SU_q(2)}$.
\end{ex}
The quantum adjacency matrix is related in this case to the Markov operator of the random walk on the dual of $SU(2)$ considered by Biane in \cite{MR1109477}. After all, the quantum Cayley graphs are regular, hence $A$ just needs to be rescaled by a constant to give a random walk operator.

It is interesting to note that these quantum Cayley graphs are non-isomorphic for all $q\in (0,1]$. Indeed, any isomorphism would be a $\ast$-isomorphism of algebras preserving the appropriate weights and intertwining the adjacency matrices. In particular it would preserve the quantum dimensions of representations but these are different for different parameters.

As our last example we choose (the dual of) the free orthogonal quantum group $O_{F}^{+}$ (see \cite{MR1382726}), where $F \in M_{n}$ is an invertible matrix such that $F \overline{F} = c\mathds{1}$ for some $c\in \mathbb{R}$. $C(O_{F}^{+})$ is the universal $C^{\ast}$-algebra generated by the entries of a unitary matrix $U:= (u_{ij})_{i,j\in [n]}$ such that $U = F \overline{U} F^{-1}$. In \cite{MR1378260} it was proved that the representation theory is the same as for $SU(2)$: the fundamental representation is irreducible, all irreducible representations are indexed by the natural numbers and
\[
n \otimes m \simeq \bigoplus_{k=|n-m|}^{n+m} k.
\]

In this case we will take as the generating projection the central projection corresponding to the fundamental representation. 

In \cite{MR2130588} an alternative approach to Cayley graphs of discrete quantum groups has been developed, based on Hilbert spaces; there a notion of a tree is defined based on the classical graph built on the set of irreducible representations of the dual compact quantum group. The author proved (see \cite[Proposition 4.5]{MR2130588}) that one gets a tree iff this compact quantum group is a finite free product of free unitary and orthogonal quantum groups and a specific central projection is chosen -- the one corresponding to the sum of respective fundamental representations.

In our framework we do not yet have tools to prove that the quantum Cayley graph of $\widehat{O_{F}^{+}}$ is a tree thus it seems necessary to understand the precise relationship between our approach and that of Vergnioux. In particular it would be interesting to check whether there exists a reasonable notion of being a tree in our framework that can be detected using only classical graphs.
\subsection{Concluding remarks}
This paper should be viewed as a first step towards understanding the quantum graphs associated to discrete quantum groups. The motivation for this work has been twofold: to motivate the need for studying both infinite and non-tracial quantum graphs, but also to introduce a new geometric tool to aid in working with quantum groups. So far we have mostly developed a general framework and now more effort should be put into investigating the combinatorial/geometric properties of specific examples. It might suggest what to do for general quantum graphs, e.g. what are paths in a quantum graph. Paths are an indispensable tool in classical theory and would likely be equally important in our case. 

Another project would be to understand the link between property (T) of a discrete quantum group and the spectral gap of the random walk on its quantum Cayley graph. Maybe one can even mimic the famous construction of Margulis to obtain a nice sequence of quantum expanding graphs.

\section*{Acknowledgements}
The author is grateful to Adam Skalski, Christian Voigt, and the anonymous referee for many helpful remarks. The author was partially supported by the National Science Center, Poland (NCN) grant no. 2021/43/D/ST1/01446. The project is co-financed by the Polish National Agency for Academic Exchange within Polish Returns Programme. 

\vspace{5 pt}
\includegraphics[scale=0.5]{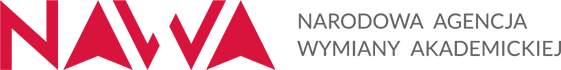}

\newcommand{\etalchar}[1]{$^{#1}$}

\end{document}